\title{Optimal Transportation Theory with repulsive costs}
\author{Simone Di Marino}
\address{Laboratoire de Math\'{e}matiques d'Orsay,Universit\'e Paris-Sud, FRANCE }
\email{simone.dimarino@math.u-psud.fr}
\author{Augusto Gerolin}
\address{Dipartimento di Matematica, Universit\`a di Pisa, Largo B.
Pontecorvo 5, 56126 Pisa, ITALY}
\email{gerolin@mail.dm.unipi.it }
\author{Luca Nenna}
\address{INRIA, MOKAPLAN, Domaine de Voluceau Le Chesnay, FRANCE }
\email{luca.nenna@inria.fr}
\begin{document}

\maketitle

\tableofcontents

\begin{abstract}
RICAM survey repulsive cost in multi-marginal optimal transportation theory and predictions for the future!

\end{abstract}

% !TEX root = ../book.tex

\section*{Introduction}
The main objective of this survey is to present recent developments in optimal transport theory for repulsive costs with finitely many marginals. We are interested in characterizing, for a class of repulsive cost functions, the minimizers of the multi-marginal optimal transportation problem
\[
(\MK) \ \ \min \Bigg\lbrace
\int_{\R^{dN}}c(x_1,\dots,x_N)d\gamma : \gamma \in
\Pi(\R^{dN},\mu_1,\dots,\mu_N) \Bigg\rbrace,
\]
where $c:(\R^d)^N\longrightarrow \R\cup\lbrace \infty\rbrace$ is a lower semi-continuous cost function and $\Pi(\R^{dN},\mu_1,\dots,\mu_N)$ denotes the set of couplings, i.e. probability measures $\gamma(x_1,\dots,x_N)$ having $\mu_i$ as marginals, for all $i =1,\dots,N$. $c$ is, typically, a sum of convex decreasing functions, as the Coulomb interaction cost \cite{BuDePGor,CFK, Lieb83, GorSeiVig, Lieb83, Sei} 
\[
c(x_1,\dots,x_N) = \sum^N_{i=1}\sum^N_{j=i+1}\dfrac{1}{\vert x_j-x_i\vert},
\]
which is the case of most interest in DFT (see section \ref{sec:why}).  Also related to Quantum Mechanics, in the so-called weak interaction regime \ \cite{BuDePGor,FMPCK,PazMorGorBac,TouColSav}, we will consider the repulsive harmonic potential as cost function $c$, 
\[
c_w(x_1,\dots,x_N) = -\sum^N_{i=1}\sum^N_{j=i+1}\vert x_j-x_i\vert^2.
\]
In the last section, we focus on the multi-marginal problem for the determinant cost 
\[
c(x_1,\dots,x_N) = \operatorname{det}(x_1,\dots,x_N), \quad \textit{or} \quad c(x_1,\dots,x_N) = \vert \operatorname{det}(x_1,\dots,x_N) \vert,
\]
studied by Carlier and Nazaret in \cite{CarNa}.

%The literature on multi-marginal problems is growing very fast both from theoretical and applied point of view. 

Multi-marginal optimal transport showed to be powerful in many applications as, for instance, in economics \cite{CarEke, ChiGalSal, ChiMcCNes, Eke, Pass14}, mathematical finance \cite{BeiHenPen, BeiJul, BeiCoxHue, DolSon, DolSon2, GalHenTou}, image processing \cite{XferPeyAu}, tomography \cite{AbrAbrBerCar} and statistics \cite{CarGalChe}.

Although very natural in such applied problems,  optimal transport problems with $3$ or more marginals was only considered in literature during the 90's by Knott and Smith, Olkin and Rachev and R\"uschendorf and Uckelmann \cite{KnoSmi, OlkRac,RusUck}, and later by Gangbo and \'Swiech \cite{GaSw}. In \cite{GaSw}, the authors proved the existence and uniqueness of Monge-type solution for the problem $(\MK)$ for the quadratic cost $c(x_1,\dots,x_N) = \sum^N_{i,j=1} \vert x_j-x_i\vert^2$.

A systematic study of the structure of transport plans and general conditions in order to guarantee the existence of Monge-type solutions for the problem $(\MK)$ was made by Pass \cite{Passthesis, Pass3, Pass2} and Kim and Pass \cite{KimPass, KimPass14}. Several theoretical developments were made last years, as the study of partial multi-marginal transport \cite{KitPass}, symmetric Monge-Kantorovich problem \cite{CoDMa,CoDePDMa, GhoMau,GhoMoa,GhoPass,KolZae,LopMen,LopOliThi,Zae} and duality \cite{BeiLeoSch, DeP, Kel, Zae}. Applications in Geometry and Analysis as the Wasserstein Barycenter problem and decoupling a system of PDE's can be found in \cite{AguCar, GhoPass,KimPasspreprint}. 

For a complete presentation of the multimarginal optimal transport theory with general costs functions, including a discussion of many applications, we refer to the survey \cite{PassSurvey}. \

In this paper, we are rather analyzing the Multimarginal Optimal Transport Theory starting from the point of view of researchers working on Density Functional Theory. 

Paola Gori-Giorgi, Michel Seidl \textit{et al} were interested in computing accurately the ground state of a molecule with a large number of atoms. As we will discuss later, after doing suitable approximations, the solutions of the optimal transport $(\MK)$ for Coulomb cost (more precisely, the optimal potential), or more generally repulsive costs, can be seen as a good \textit{ansatz} in order to construct solutions for Hohenberg-Kohn functional \eqref{eq.FHK}. 

From a mathematical perspective the interesting case is when all marginals of the coupling $\gamma$ in $(\mathcal{MK})$ are absolutely continuous with respect to Lebesgue measure and are all the same. In particular, since the cost functions are \textit{``repulsive''}, if Monge-type solutions exists, they should follow the rule \textit{``the further, the better!''}, which means that we want to move the mass as much as we can. In other words, in the present case, optimal transport plans tend to be as spread as possible.  But when a cost can be considered \textit{repulsive}? It is difficult to find a good notion for this concept since usually it is based on the intuition that comes solving the particular problem. However a possible formal definition for repulsive cost is to require a mild condition, namely that we don't want any two particles in the \textit{same} position; it should be cheaper to have them in different position. For example can write the inequality for the particles $1$ and $2$:
\begin{align*} c(x_1,x_1, x_3, \ldots, x_N) + c(x_2,&\, x_2, x_3, \ldots, x_N ) \geq \\ & c(x_1,x_2, x_3, \ldots, x_N) + c(x_2,x_1,x_3, \ldots, x_N).
\end{align*}
Then, also all other inequalities for $i \neq j$ should be considered. This condition is satisfied by all the considered costs, even the ones that doesn't seem so ``repulsive'' at first, such as those of the form $f(x_1+ \ldots + x_n)$ with $f$ convex function.
\subsection*{Organization of this Survey}
In section \ref{sec:why}, we present the physical motivation in considering repulsive costs and give an introduction of Density Functional Theory from a mathematical point of view. The main goal of this section is to explain how optimal solutions of Monge-Kantorovich problem can be useful to compute the ground state energy of a quantum system of $N$ electrons.

Section \ref{sec:DFTOT} introduces briefly the general theory of multimarginal optimal transport, including a discussion about the Kantorovich duality and the notion of $c$-cyclical monotonicity applied to this framework. In addition, we give some highlights in the interesting case of symmetric transport plans. %which showed to be important in many applications including in Density Functional Theory.

In sections \ref{sec:Coulomb}, \ref{sec:OTHarm} and \ref{sec:Det}, respectively, we discuss the recent development for the specific cases of transportation problem with  Coulomb, repulsive harmonic and determinant cost.

Finally, in section \ref{numerics} we introduce an iterative method, recently used in many applications of optimal transport \cite{Ben,BenCaNen}, and we present some numerical experiments for the problems studied in previous sections.

\subsection*{What is new?}

We present in section \ref{sec:OTHarm} some new results we obtained for cost functions 
\[
c(x_1,\dots,x_N) = f( x_1+\dots+x_N), \quad f:\R^d\to\R \ \textit{is convex},
\]
which, in particular, include the case of repulsive harmonic costs (see corollary \ref{OTHARM:maincor}). In addition, we show: \medskip

\begin{enumerate}
\item[(i)] \textit{Existence of Fractal-like cyclical Monge solutions:} In Theorem \ref{OTHarm:mainthm}, we show the existence of couplings $\gamma = (Id, T,T^{(2)},\dots,T^{(N-1)})_{\sharp}\mu$ concentrated on the graphs of functions $T,T^{(2)},\dots,T^{(N-1)}$, where $\mu$ a uniform probability measure on the $d$-dimensional cube. Moreover, $T$ is not differentiable in any point, which differs substantially to the classical regularity theory in the classical $2$ marginals case \cite{Caf,Caf2}.\medskip
\item[(ii)] \textit{Optimal Transport plans supported on sets of Hausdorff dimension $> 1$:} In standard optimal transportation (the 2 marginals case) the target is to understand if solutions of $(\mathcal{MK})$ are well-known to be Monge-type solutions: optimal transport plans are supported by the graph of a function $T$. In Section \ref{sec:OTHarm}, we give a concrete example (see example \ref{OTHarm:ex:trueplan}) where optimal transport plans are supported on $2$-dimensional Hausdorff sets.\medskip
\item[(iii)] \textit{Numerical results:} we present some interesting results for the repulsive harmonic and for the determinant cost. 

\end{enumerate}

\section{Why Multi-marginal Transport Theory for repulsive costs?}\label{sec:why}

\subsection{Brief introduction to Quantum Mechanics of $N$-body systems}

$\quad$ Energies and geometries of a molecule depend sensibly on the kind of
atom on a chemical environment. Semi-empirical models as \textit{Lewis
structure} explain some aspects, but they are far from being satisfactory,
mostly because they are not quantitative.

Classical behavior of atoms and molecules is described accurately, at
least from a theoretical point of view, by quantum mechanics. However,
in order to predict the chemical behavior of a molecule with a large
number of electrons we need to deal with computational aspects and
approximations are needed. For example, if we use a direct
approximation of 10 grid points in $\R$ of the time-dependent
Schr\"odinger equation in order to simulate the chemical behavior of
the water molecule ($H_2O$), which has 10 electrons and its position
is represented in the space $\R^{30}$, then we need to compute
$10^{30}$ grid points.

Quantum Chemistry studies the ground state of individual atoms and
molecules, the excited states and the transition states that occur
during chemical reactions. Many systems use the so-called {\it
Born-Oppenheimer approximation} and many computations involve
iterative and other approximation methods.

The main goal of Quantum Chemistry is to describe accurately results
for small molecular systems and to increase the size of large
molecules that can be processed, which is very limited by scaling
considerations. 

It means that from a Quantum Model, for which the only input is the
atomic number of a molecule, we want to predict the evolution of a
molecular system. Classical models consider a molecule with $N$
electrons and $M$ nuclei. We denote by a point  $x_i \in \R^3$ the
position coordinates of the $i$-th electron of mass $m_e$ and by $s_i$
its $i$-th
electron spin. The charges, the masses and the positions of the $\alpha$-th nucleus are represented,
respectively, by $Z_{\alpha} \in \R^{M}$, $m_{\alpha} \in \R_+^{M}$
and $R_{\alpha} \in (\R^3)^M$ 

The state of the system is described by a time-dependent wave function
$\psi \in L^2( [0, T] ;  \otimes^{N}_{i=1} L^2(\R^3\times\Z^2) \times
\otimes_{\alpha=1}^{M} L^2(\R^3) )$,
$\psi(x_1,s_1,\dots,x_N,s_N;R_1,\dots,R_M,t)$. The group $\Z^2 = \lbrace \uparrow,\downarrow \rbrace$
represents the spin of a particle.

We say that a wave function is antisymmetric or \textit{fermionic} if it changes sign under a simultaneous exchange of two electrons
$i$ and $j$ with the space coordinates $x_i$ and $x_j$ and spins $s_i$ and $s_j$, that is, 
\[
\psi(x_{\sigma(1)},s_{\sigma(1)},\dots,x_{\sigma(N)},s_{\sigma(N)}) =
\operatorname{sign}(\sigma)\psi(x_1,s_1,\dots,x_N,s_N) \quad \sigma
\in \S_N
\]
where $\S_N$ denotes the permutation set of $N$ elements.

In physics literature, the \textit{Fermionic} wave function obeys the \textit{Fermi-Dirac statistics} and, in particular, the \textit{Pauli exclusion principle}. On the other hand, a wave function is called symmetric or {\it bosonic} when it obeys \textit{Bose-Einstein statistics} which also implies that when one swaps two bosons, the wave function of the system is unchanged.

If it is not mentioned explicitly, we will suppose that the wave function is spinless, i.e. $\psi$ is a function depending only on time, electrons and nuclei position
$\psi(t,x_1,x_2,\dots,x_N,R_1,\dots,R_M)$. 

The evolution of a wave function $\psi$ boils down to
a (time-dependent) Schr\"odinger Equation
\begin{equation}\label{GSE}
i\partial_t \psi = H \psi \quad , \quad H = T_n + T_e + V_{ne} + V_{ee} + V_{nn}
\end{equation}
where the operators $T_n, T_e, V_{ne}, V_{ee}$ and $V_{nn}$ are defined by
\begin{itemize}
\item[] $T_n = -\h\summ_{\alpha=1}^M\dfrac{1}{2m_{\alpha}}\Delta_{R_{\alpha}}
\quad \textit{(Nuclei Kinetic Energy)}$
\item[] $T_e = -\h\summ^N_{i=1}\dfrac{1}{2m_e}\Delta_{x_i}$ \quad
\it{(Electron Kinetic Energy)}
\item[] $V_{ne} = -\summ^{N}_{i=1}v_R(x_i)
=-\summ^N_{i=1}\bigg(\sum^M_{\alpha=1}\dfrac{Z_{\alpha}}{\vert
x_i-R_{\alpha}\vert} \bigg)  \quad \quad \Big(\begin{array}{ll}
\small{\textit{Potential Energy of Inte-}} \\
\small{\textit{raction Nuclei-Electron}} \end{array}\Big)$
\item[] $V_{ee} = \summ^{N}_{i=1}\summ^{N}_{j=1,j\neq i}
\dfrac{1}{\vert x_i - x_j \vert} \quad \textit{(Interaction
Electron-Electron)} $
\item[] $V_{nn} = \summ^{M}_{\alpha=1}\summ^{M}_{\beta=1,\beta\neq
\alpha} \dfrac{Z_{\alpha}Z_{\beta}}{\vert R_{\alpha} - R_{\beta}
\vert} \quad \textit{(Interaction Nuclei-Nuclei)}$ $\medskip$
\end{itemize}

\medskip {\it\bf Born-Oppenheimer Approximation}: The Born-Oppenheimer
approximation (named for its original inventors, Max Born and Robert
Oppenheimer) is based on the fact that nuclei are several thousand
times heavier than electrons. The proton, itself, is approximately
2000 times more massive than an electron. Roughly speaking, in
Born-Oppenheimer approximation we suppose that nuclei behave as point
classical particles. This is reasonable, since typically $2000m_e \leq m_{\alpha} \leq 100000m_e$.

In other words, we suppose $m_{\alpha} \gg m_e$, we fix $m_e =
1$ and we consider an \textit{ansatz} of type $\psi(x,R,t) =
\psi_e(x,R)\chi(R,t)$, which physically means that the dynamics of
the electron $\psi_e$ are decoupled of the dynamics of the atomic nuclei $\chi$. Substituting
that \textit{ansatz} in \eqref{GSE}, we can show that the electronic
part $\psi_e(x,R)$ solves the following eigenvalue problem, so-called
\textit{Eletronic Schr\"odinger Equation}
\begin{equation}\label{ESE}
 H_e\psi_e = \lambda_e \psi_e, \quad  \quad H_{e} = T_e + V_{ne} +
V_{ee} + V_{nn}
\end{equation}
where the eigenvalue $\lambda_e = \lambda_e(R)$ depends on the
position vector $R = (R_1,\dots,R_M)$ of the atomic nuclei. The function
$H_{e} = H_{e}(R)$ is called \textit{Eletronic Hamiltonian}. From the other
side, through a formal argument, the nuclei wave function $\chi(R,t)$ is a solution of a Schr\"odinger Equation restricted to the nuclei with potential energy $\lambda_e$,
$$ i\partial_t \chi(R) = (T_n + \lambda_e(R))\chi(R). $$

We refer to \cite{Bodo} for all computations and formal deduction of those formulas. Another consequence of that approximation is that the nuclear components of the wave function are spatially more localized than the electronic component of the wave function. In the classical limit, the nuclei are fully localized at single points representing classical point particles. \medskip

In Quantum Chemistry it is interesting to study the so called \textit{Geometric Optimization Problem}.

\medskip {\bf The Geometric Optimization Problem}:  Compute the
following minimizer
\be\label{pb:GOP}
\inf \bigg\lbrace  E(R_1,\dots,R_M) +
\summ^{M}_{\alpha=1}\summ^{M}_{\beta=1,\beta\neq \alpha}
\dfrac{Z_{\alpha}Z_{\beta}}{\vert R_{\alpha} - R_{\beta} \vert} :
(R_1,\dots,R_M) \in \R^{3M} \bigg\rbrace
\ee
where
\begin{itemize}
\item The second term is the nuclei-nuclei iteration $V_{nn}$, as
defined in the \eqref{GSE}.
\item The function $E(R) = E(R_1,\dots,R_M)$ corresponds to the
effective potential created by the electrons and is, itself, given by
a minimization problem (see \textit{Electronic minimization problem}
bellow).
\end{itemize}

The value of the potential $E_0 = E_0(R_1,\dots,R_M)$ for given fixed
positions of the nuclei is obtained by solving the electronic problem:

\medskip {\bf The Electronic minimization problem}:  Compute the
lowest eingenvalue ({\it ``ground state energy''}) $E_0$ of the
following linear operator, called {\it Electronic Hamiltonian}

\begin{deff}[Eletronic Hamiltonian]\label{op.Hel}
The Eletronic Hamiltonian is a linear operator $H_{el}: L_{anti}^2(\R^{3N})\to L_{anti}^2(\R^{3N}),$
$$ \quad  \Hel
=-\dfrac{\h^2}{2}\summ^N_{i=1}\Delta_{x_i} + \summ^N_{i=1}v_R(x_i) +
\summ^N_{i =1}\summ^N_{ j =i+1}f(x_j - x_i), $$
\end{deff}
where $L_{anti}^2(\R^{3N})$ denote the set of square-integrable
antisymmetric functions $\psi:\R^{3N}\to \R$, $v_R:\R^3\rightarrow \R$
is a $L^2(\R^{3N})$ function and $f:\R\rightarrow \R$ is a continuous
function. $\medskip$

The first term of the Eletronic Hamiltonian is the Kinetic Energy and
the second term, the function $v_R(x_i)$, depends only on the single
electron. Typically, $v_R(x_i) = -\sum^M_{\alpha=1}
\frac{Z_{\alpha}}{\vert x_i-R_{\alpha}\vert}$ is the interaction
electron-nuclei energy of the single electron. The function $f$
depends only on the distance of two electrons and measures the
electron-electron potential interaction; typically $f$ is the Coulomb
Interaction $f(x_j - x_i) = 1/\vert x_j - x_i \vert$ or
the repulsive harmonic interaction $f(x_j - x_i) = -\vert
x_j - x_i \vert^2$. From both mathematical and physical viewpoint, it may be also useful to
study more general convex and concave functions.

The {\it ground state} of the functional (\ref{op.Hel}) is given by
the {\it Rayleigh-Ritz variational principle}
\[
E_0 = E(R_1,\cdots,R_M) = \min \lbrace \langle \psi, \Hel\psi \rangle
: \psi \in \Hi \rbrace,
\]
where $\Hi = \lbrace \psi \in H^1(\R^{3N}) : \psi \text{ is
antisymmetric and } \Vert \psi \Vert = 1 \rbrace$. Equivalently,
\begin{equation}\label{eq.groundstateenergy}
E_0 = \min\Big\lbrace  T_e[\psi] + V_{ne}[\psi] + V_{ee}[\psi] : \psi
\in \Hi \Big\rbrace
\end{equation}
where $T_e[\psi]$ is the \textit{Kinetic energy},
\[
\ds T_e[\psi] = \dfrac{\hbar^2}{2}\summ_{s_1\in
\Z^2}\int_{\R^{3}}\dots\summ_{s_N\in
\Z^2}\int_{\R^{3}}\summ^N_{i=1}\vert
\nabla_{x_i}\psi(x_1,s_1\dots,x_N,s_N)\vert^2dx_1\dots d x_N
\]
$V_{ne}[\psi]$ is the electron-nuclei interaction energy,
\[
\ds V_{ne}[\psi] = \summ_{s_1\in
\Z^2}\int_{\R^{3}}\dots\summ_{s_N\in
\Z^2}\int_{\R^{3}}\summ^N_{i=1}v_R(x_i)\vert
\psi(x_1,s_1\dots,x_N,s_N)\vert^2dx_1\dots d x_N
\]
and, $V_{ee}[\psi]$ is the electron-electron interaction energy
\[
\ds V_{ee}[\psi] = \summ_{s_1\in
\Z^2}\int_{\R^{3}}\dots\summ_{s_N\in
\Z^2}\int_{\R^{3}}\summ^N_{i=1}\summ^N_{j=i+1}\dfrac{1}{\vert
x_j-x_i\vert}\vert \psi(x_1,s_1\dots,x_N,s_N)\vert^2dx_1\dots d x_N
\medskip
\]

Here, the ground state quantum $N$-body problem refers to the problem of finding equilibrium states for system of type (\ref{GSE}). A
theorem proved first by Zhislin \cite{Zh60}, guarantees the existence
of the minimum in $(\ref{eq.groundstateenergy})$. Some variants can be
found in the literature due, for instance, to Lieb \& Simon
\cite{LiebSimon}, Lions \cite{Lions87} and Friesecke \cite{Fri03}.

In computational chemistry, the Electronic minimization problem methods are used to
compute the equilibrium configuration of molecules and solids. However, the most
computationally practicable methods in this context are not numerical
methods (in the sense in which this terminology is used in
mathematics), but power series of analytical solutions of reduced models.

\subsection{Probabilistic Interpretation and Marginals} 

The square norm of the wave function
$\psi(x_1,s_1,\cdots,x_N,s_N)$ can be interpreted as a $N$-point
probability density distribution for the electrons are in the points
$x_i$ with spins $s_i$. 
\[
\int_{\R^{3N}} \vert \psi(x_1,s_1,\cdots,x_N,s_N) \vert^2 dx = 1.
\]
If $\psi$ is a $L^2(\R^{3N})$ function we can define the single particle density by
\be\label{eq.singledensity}
\rho(x_1) =
N\int_{\R^{3(N-1)}}\gamma_N(x_1,s_1,\cdots,x_N,s_N)dx_2,\cdots
dx_N,
\ee
where $\gamma_N$ represents the $N$-point position density
\be\label{eq.Npointdensity}
\gamma_N(x_1,\cdots,x_N) = \sum_{s_1,\cdots,s_N \in \Z_2} \vert
\psi(x_1,s_1,\cdots,x_N,s_N)\vert^2.
\ee
Analogously, we can define the $k$-density
\be\label{eq.pairdensity}
\gamma_k(x_1,\dots,x_k) = {N\choose
k}\int_{\R^{3(N-k)}}\gamma_N(x_1,s_1,\cdots,x_N,s_N)dx_{k+1}.\cdots
dx_N
\ee

We remark that when the particle is spinless the single particle density \eqref{eq.singledensity} can be simply be written as
\[
\rho(x_1) =
N\int_{\R^{3(N-1)}}\vert \psi(x_1,\dots,x_N)\vert^2 dx_2,\cdots
dx_N.
\] 

The relevance of $\rho$ and $\gamma_2$  to compute the
ground state energy (\ref{eq.groundstateenergy}) is due to the fact
that the interation nuclei-nuclei energy $V_{ne}[\psi]$ and
electron-electron energy $V_{ee}[\psi]$ depends only on these.

\begin{lemma}[\cite{CFK}]\label{lemma:rhoresem} If $\psi, \nabla \psi \in
L^2((\R^3\times \Z^2)^N,\R)$, $\psi$ antisymmetric and $\Vert \psi
\Vert_2 = 1$, then
\[
V_{ne}[\psi] = \int_{\R^3} v(x)\rho(x)dx, \quad V_{ee}[\psi] =
\int_{\R^6} \dfrac{1}{\vert x-y\vert} \gamma_2(x,y)dxdy.
\]
\end{lemma}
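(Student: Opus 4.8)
The plan is to reduce both identities to the definitions \eqref{eq.singledensity} of the single-particle density $\rho$ and \eqref{eq.pairdensity} of the pair density $\gamma_2$; the only genuine inputs are the antisymmetry of $\psi$, which forces every summand in $V_{ne}[\psi]$ (resp.\ $V_{ee}[\psi]$) to contribute equally and thereby produces the combinatorial factors $N$ (resp.\ $\binom{N}{2}$), together with an a priori bound ensuring that all the integrals in sight converge absolutely, so that Fubini's theorem and the relabelling of variables are legitimate.

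First I would record the symmetry observation: since $\psi$ is antisymmetric, the position density $\gamma_N(x_1,\dots,x_N)=\sum_{s}|\psi(x_1,s_1,\dots,x_N,s_N)|^2$ from \eqref{eq.Npointdensity} is invariant under every permutation of $x_1,\dots,x_N$, because $\operatorname{sign}(\sigma)^2=1$ and the accompanying permutation of the spin labels is absorbed by the sum over $s$. Hence, for each $i$, the change of variables exchanging $x_i$ with $x_1$ gives $\int v(x_i)\gamma_N\,dx=\int v(x_1)\gamma_N\,dx$, so that $V_{ne}[\psi]=\sum_{i=1}^N\int v(x_i)\gamma_N\,dx=N\int v(x_1)\gamma_N\,dx$; integrating out $x_2,\dots,x_N$ and using \eqref{eq.singledensity} yields $V_{ne}[\psi]=\int_{\R^3}v(x_1)\rho(x_1)\,dx_1$. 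The argument for $V_{ee}$ is identical: each of the $\binom{N}{2}$ unordered pairs $\{i,j\}$ contributes the same quantity $\int|x_1-x_2|^{-1}\gamma_N\,dx$ after the analogous relabelling, hence $V_{ee}[\psi]=\binom{N}{2}\int|x_1-x_2|^{-1}\gamma_N\,dx$, and integrating out $x_3,\dots,x_N$ against \eqref{eq.pairdensity} gives $V_{ee}[\psi]=\int_{\R^6}|x-y|^{-1}\gamma_2(x,y)\,dx\,dy$.

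I expect the main obstacle to be not this elementary rearrangement but the justification that it is permitted in the presence of the Coulomb singularities of $v$ and of $1/|x-y|$, which is precisely where the hypothesis $\nabla\psi\in L^2$ enters. Here I would invoke Hardy's inequality in $\R^3$, $\int_{\R^3}|\phi|^2/|x|^2\,dx\le 4\int_{\R^3}|\nabla\phi|^2\,dx$, applied in the variable $x_i$ after translation to each nucleus $R_\alpha$ for the electron--nucleus terms, and in the relative variable $x_i-x_j$ (with $x_j$ frozen) for the electron--electron terms; together with the pointwise bound $1/|x|\le\tfrac12(1+1/|x|^2)$ and $\|\psi\|_2=1$ this controls $\int|v|\,\rho$ and $\int|x-y|^{-1}\gamma_2$ by a constant multiple of $\hbar^{-2}\big(T_e[\psi]+1\big)<\infty$. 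Once this absolute convergence is in hand, Fubini and the changes of variables used above are all legitimate, and it only remains to note that the spin sums followed by the position integrals on the left-hand sides coincide, by \eqref{eq.Npointdensity}, with the position integrals against $\gamma_N$ used throughout the computation.
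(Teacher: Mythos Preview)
The paper does not supply its own proof of this lemma; it is merely quoted from \cite{CFK}. Your argument is correct and is essentially the standard one: the antisymmetry of $\psi$ makes $\gamma_N$ fully symmetric, so all $N$ one-body terms (respectively all $\binom{N}{2}$ two-body terms) coincide after relabelling, and the combinatorial factors match precisely those in the definitions \eqref{eq.singledensity} and \eqref{eq.pairdensity}. Your use of Hardy's inequality to control the Coulomb singularities by the kinetic energy, and thereby justify Fubini and the changes of variables, is exactly the reason the hypothesis $\nabla\psi\in L^2$ is present and is the device used in \cite{CFK} as well. There is nothing to add.
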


. 

It is natural to wonder about the space of those densities arising from such a density $\psi$ is defined, i.e, the space $\mathcal{A}$ of densities
$\rho:\R^{3}\to \R$ verifying \eqref{eq.singledensity} for an
antisymmetric $\psi$ such that $\psi, \nabla \psi \in L^2((\R^3\times
\Z^2)^N,\R)$ and $\Vert \psi \Vert_{2} = 1$. An explicit characterization exists (due to Lieb \cite{Lieb83}):

\be\label{eq.spacerho}
\mathcal{A} = \bigg\lbrace \rho:\R^{3}\to\R : \rho \geq 0, \sqrt{\rho}
\in H^1(\R^{3}) \text{ and } \int_{\R^3} \rho(x)dx = N \bigg\rbrace.
\ee

\subsection{Density Functional Theory (DFT)}

$\quad$ The Density Functional Theory is the standard approximation to quantum
mechanics in simulations of a system with more than a dozen electrons
and showed to be successful in many instances but has rare drastic
failures as, for instance, in predicting the behavior of $Cr_2$
molecules \cite{CoMoYa}. DFT theory approximates quantum mechanics via variational
principles for the marginal density
\be\label{eq:rho1}
\rho(x_1) := \int_{\R^{d(N-1)}} \vert \psi(x_1,\dots,x_N)\vert^2 dx_2\dots dx_N,
\ee
where $\psi$ is a wave function associated to the $N$-body quantum
problem as in equation \eqref{eq.singledensity}. Roughly speaking, DFT
models are semi-empirical models of the pair density
\be\label{eq:rho2}
\gamma_2(x_1,x_2) := \int_{\R^{3(N-2)}} \vert \psi(x_1,\dots,x_N)\vert^2 dx_3\dots dx_N.
\ee
in terms of its marginal $\rho$. We simply write $\psi \rightarrow \rho$ and $\psi \rightarrow \gamma_2$ to denote the relation between $\psi$ and $\rho$, and $\psi$ and $\gamma_2$. This means, respectively, that $\psi$ has single particle density $\rho$ and pair density $\gamma_2$.

Concerning the ground state problem \eqref{eq.groundstateenergy}, we
notice that, after the lemma \ref{lemma:rhoresem}, we are able to
write the \textit{Electronic minimization problem}
\eqref{eq.groundstateenergy} as
\be\label{eq.groundstatedensity}
E_0 = \inf_{\rho \in \mathcal{A}} \bigg\lbrace F_{HK}[\rho] +
\int_{\R^3}v(x)\rho(x)dx \bigg\rbrace,
\ee
with
\be\label{eq.FHK}
F^{HK}[\rho] = \inf\bigg\lbrace T[\psi] + V_{ee}[\gamma_2] : \rho \in
\mathcal{A}, \psi \to \rho \bigg\rbrace . \medskip
\ee
where $F_{HK}$ is the so called {\it Hohenberg-Kohn} or {\it Levy-Lieb functional}. For a mathematical sake of completeness we also consider a weak repulsive interaction given by
\be\label{eq:confinedrepulsiveharmonic}
\hat{V}_{ee}[\rho] = \ds\int_{K}\sum^N_{i=1}\sum^N_{j=1}-\vert x_j-x_i\vert^2\gamma_N(x_1,\cdots,x_N)dx_1\dots dx_N,
\ee
in a compact set $K \subset \R^{3N}$ and $+\infty$ otherwise.

The next theorem is the main result in Density Functional Theory. 
\begin{teo}[Hohenberg-Kohn, Levy-Lieb]\label{th:HK}
Let $\rho \in \P(\R^3)$ be a probability density such that $\rho \in
H^1(\R^3)$ and $f:\R\to\R$ be a continuous function. There exists a functional $F^{HK}:\P(\R^{3})\rightarrow
\R$ depending only on the single-particle density $\rho$ such that for
any potential $V_{nn}$, the exact Quantum Mechanics ground state
energy (\ref{eq.groundstateenergy}) satisfies
\begin{equation}\label{eq.HK}
E_0 = \min \Big\lbrace \Big(F^{HK}[\rho] +
N\int_{R^3}v_R(x)\rho(x)dx\Big) : \rho \in \P(\R^{3})\Big\rbrace
\end{equation}
Moreover, $F^{HK}[\rho]$ is given itself as a minimum problem
\[
F^{HK}[\rho] =\min \bigg\lbrace \Big\langle
\psi,\Big(-\frac{\hbar^2}{2}\Delta +
\sum^N_{i=1}\sum^N_{j=i+1}f(x_j -
x_i)\Big)\psi\Big\rangle \text{ } : \text{ } \psi \in
\Hi, \psi \rightarrow \rho \bigg\rbrace
\]
where, $\psi \to \rho$ means that $\psi$ has single-particle density $\rho$ and $\mathcal{A}$ is the set of $\rho:\R^{3}\to\R$ such that $\rho \geq 0$, $\sqrt{\rho}
\in H^1(\R^{3})$  and $\int_{\R^3} \rho(x)dx = N$.

\end{teo}

The Hohenberg-Kohn theorem states that the functional $F^{HK}$ is
\textit{universal} in the sense that it does not depend on the
molecular system under consideration. From a Physics point of view, it
garantees that in a molecular system of $N$ electrons, the single electron density $\rho$ determines the pair density of the system (see corollary \ref{cor:HK}).  From a
mathematical perspective, the proof of this theorem is a functional
analysis exercise that, for sake of completeness, we are going to
present in a version we learnt from Gero Friesecke.

\begin{proof}
Firstly we remark that the non-universal part of the energy only depends on
$\rho$:
\[
\langle \psi, \sum^{N}_{i=1}v(x_i)\psi \rangle = \int_{\R^{3N}}
\sum^{N}_{i=1}v(x_i)\vert \psi(x_1,\dots,x_N)\vert^2d\mathbf{x} =
N\int_{\R^3} v(x)\rho(x)dx
\]
Then, we rewrite problem
(\ref{eq.groundstateenergy}) as a double $\inf$
problem
\[
\begin{array}{ll}
E_0
&=\ds \inf_{\psi} \Big( \langle \psi,\H_{el}\psi\rangle +
N\int v(r)\rho(r)dr\Big) \\
&=\ds \inf_{\rho} \Big(\inf_{\psi\mapsto\rho}\Big( \langle
\psi,\H_{el}\psi\rangle\Big) + N\int v(r)\rho(r)dr\Big) \\
&=\ds \inf_{\rho} \Big(F^{HK}[\rho] + N\int v(r)\rho(r)dr\Big)
\end{array}
\]
where  $F^{HK}[\rho] =
\inf_{\psi\mapsto\rho}\Big( \langle \psi,\H_{el}\psi\rangle\Big)$.
\end{proof}

The present version stated in theorem \ref{th:HK} is due by Levy and
Lieb. The next corollary contains the main physical and mathematical
consequence of the Hohenberg-Kohn theorem (\ref{th:HK}).

\begin{cor}[HK Theorem and Coupling problem]\label{cor:HK}
Let $\rho \in \P(\R^{3})$ be a measure and $\psi \in H^1(\R^{3N})$ be
an antisymmetric function with $\Vert \psi \Vert_{2} = 1$. There
exists a universal map from single-particle density $\rho(x_1)$ to a
pair densities $\gamma_2(x_1,x_2)$ - or even $k$-density
$\gamma_k(x_1,\dots,x_k)$ - which gives the exact pair density of any
$N$-electron molecular ground state $\psi(x_1,\dots,x_N)$ in terms of
its single-particle density.
\end{cor}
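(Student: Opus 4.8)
The plan is to make the universal map explicit through the variational characterisation of $F^{HK}$ supplied by Theorem \ref{th:HK}. I would write $H_0:=-\tfrac{\hbar^2}{2}\sum_{i=1}^N\Delta_{x_i}+\sum_{i<j}f(x_j-x_i)$ for the \emph{internal} (potential-free) electronic Hamiltonian, so that $F^{HK}[\rho]=\inf\{\langle\psi,H_0\psi\rangle:\psi\in\Hi,\ \psi\to\rho\}$. Fix $\rho\in\mathcal A$ with $\sqrt\rho\in H^1(\R^3)$: the admissible set is non-empty and, by the compactness/coercivity results of Zhislin and Lieb--Simon recalled above, the infimum is attained at some $\psi_\rho$. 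I would then set
\[
\Gamma_k[\rho](x_1,\dots,x_k):=\binom{N}{k}\sum_{s_1,\dots,s_N\in\Z_2}\int_{\R^{3(N-k)}}|\psi_\rho(x_1,s_1,\dots,x_N,s_N)|^2\,dx_{k+1}\cdots dx_N ,
\]
which is well defined since $\psi_\rho,\nabla\psi_\rho\in L^2$ exactly as in Lemma \ref{lemma:rhoresem}, and claim that $\rho\mapsto\Gamma_k[\rho]$ (with $\Gamma_2$ the pair-density case) is the map in the statement.

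The first thing to verify is that $\Gamma_k$ recovers the $k$-density of an \emph{arbitrary} molecular ground state. Let $v_R$ be any external potential and $\psi\in\Hi$ a ground state of $H_{el}=H_0+\sum_{i=1}^N v_R(x_i)$, with single-particle density $\rho_\psi$ and $k$-density $\gamma_k^\psi$. Since $\langle\psi,H_{el}\psi\rangle=\langle\psi,H_0\psi\rangle+N\int_{\R^3}v_R\rho_\psi$ and $\psi$ is admissible in the minimisation defining $F^{HK}[\rho_\psi]$, I get $E_0=\langle\psi,H_{el}\psi\rangle\ge F^{HK}[\rho_\psi]+N\int_{\R^3}v_R\rho_\psi\ge E_0$, the last step being \eqref{eq.HK}. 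Hence every inequality is an equality, so $\langle\psi,H_0\psi\rangle=F^{HK}[\rho_\psi]$, i.e. $\psi$ is itself a minimiser in the problem defining $F^{HK}[\rho_\psi]$; taking the partial trace over the last $N-k$ variables then gives $\gamma_k^\psi=\Gamma_k[\rho_\psi]$. Universality is immediate because the construction of $\Gamma_k$ never referred to $v_R$.

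The delicate point — and the one I expect to be the real obstacle — is \emph{well-posedness of the selection} $\rho\mapsto\psi_\rho$, that is, the ``uniqueness'' content of Hohenberg--Kohn: if the constrained minimisation defining $F^{HK}[\rho]$ admits several minimisers, they could a priori carry different $k$-densities and ``the'' map would be ambiguous. I see two honest ways to close this. First, restrict to the physically generic regime of non-degenerate ground states, where the original HK argument shows the ground state is unique and is determined by $\rho$, so the selection is canonical on the range of such densities. Second, fix once and for all a measurable selection $\rho\mapsto\psi_\rho$ of minimisers — possible since $F^{HK}[\rho]$ is a genuine minimum and the set of minimisers has closed graph in $L^2$ — at the price that $\Gamma_k$ is then merely \emph{one} admissible universal map. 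Either route proves the statement as phrased, and I would flag in the text that making the choice fully canonical in the degenerate case requires the stronger (and essentially open) assertion that all minimisers of $F^{HK}[\rho]$ share the same $k$-density; the only other loose ends are routine integrability checks for the partial traces, already covered by $\psi_\rho,\nabla\psi_\rho\in L^2$.
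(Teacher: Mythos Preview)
Your approach is essentially the same as the paper's: pick a minimiser $\overline\psi$ of the constrained problem $\min\{\langle\psi,H_0\psi\rangle:\psi\to\rho\}$ and declare the $k$-density of $\overline\psi$ to be the universal map. The paper's proof is two lines long and does exactly this; it then acknowledges, in a remark immediately afterwards, that $\gamma_k$ ``may be not unique, since the ground state may be degenerate''---precisely the selection issue you flag and discuss more carefully. Your additional verification that any external-potential ground state is itself a minimiser of $F^{HK}$ at its own density is a useful clarification the paper leaves implicit, but it does not change the strategy.
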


\begin{proof}
Consider
\[
\overline{\psi} \in \operatorname{argmin} \lbrace \langle \psi,
H_{el}\psi\rangle : \psi \to \rho \rbrace
\]
and define by $\gamma_k$ the universal k-point density of that minimizer, i.e.
\[
\gamma_k(x_1,\cdots,x_k) = \int \vert
\overline{\psi}(x_1,\cdots,x_N)\vert^2d_{k+1}\cdots dx_{N}
\]
\end{proof}

Of course, $\gamma_k$ may be not unique, since the \textit{ground state}
may be degenerate. The existence of a map $T:\R^3\to\R^3$ which
determines the density of $\gamma_2$ from $\rho$ is highly not trivial
and not feasible because still uses high dimension wave function.

\subsection{``Semi-classical limit'' and Optimal Transport problem}

A natural approach to understand the behavior of the Hohenberg-Kohn functional $F^{HK}[\rho]$ \eqref{eq.FHK} is to study separately, the contributions of the kinetic energy and the Coulomb interaction electron-electron energy. A well-know method in DFT literature, known as adiabatic connection, is to neglect the kinetic energy, in which the electron-electron interaction is rescaled with a real parameter $\lambda$ while keeping the density $\rho$ fixed \cite{LanPer,Yang}
\be\label{eq:FHKlamba}
\ds F_{\lambda}^{HK}[\rho] =\min_{\substack{\psi \in \Hi  \\ \psi \rightarrow \rho}} \bigg\lbrace \langle \psi,\big(-\frac{\hbar^2}{2}\Delta +\sum^N_{i=1}\sum^N_{j=i+1}\frac{\lambda}{\vert x_j - x_i\vert})\psi\rangle \text{ } \bigg\rbrace.  \\
\ee

The strictly correlated electron limit $(\lambda \to \infty)$, up to relaxing in the space of probability measures, was first considered by two papers in physics literature: Seidl \cite{Sei} and Seidl, Gori-Giorgi \& Savin \cite{SeiGorSav}. In \cite{GorSeiVig}, Gori-Giorgi, Seidl and Vignale interpreted the strong-interaction regime as a mass transportation problem with a Coulomb cost. 

Later, an equivalent limit, so-called \textit{``semi-classical limit''}, was made mathematically rigorous in the two particles case by Cotar, Friesecke \& Kl\"uppelberg \cite{CFK}. They considered the Hohenberg-Kohn functional $F^{HK} = F_{\hbar}^{HK}$ as a function of both $\rho$ and $\hbar$ $(\lambda = 1)$ and proved that - up to passage of the limit $\hbar\to 0$  - the $F_{HK}$ reduces to the following functional
\be\label{eq.HKpairdensities}
\tilde{F}[\rho] = \inf \bigg\lbrace \int_{\R^6}\dfrac{1}{\vert
x-y\vert}\gamma_2(x,y)dxdy : \gamma_2 \in \mathcal{A}_2, \gamma_2\to\rho
\bigg\rbrace,
\ee
where $\gamma_2\to\rho$ means that $\gamma_2$ satisfies the equation
\eqref{eq.pairdensity}, and the set $\mathcal{A}_2$ of admissible pair
density functions is defined by the image of $\mathcal{A}$ under the map $\psi
\to \gamma_2$. As pointed out in \cite{CFK}, instead of the corresponding single particle density
case, we do not know any characterization of the space of admissible pair density function
$\mathcal{A}_2$.

We state in the next theorem the semi-classical limit for the $2$-particles case. The general case of $N$ particles is still an open problem.

\begin{teo}[``Semi-classical limit'' for $N=2$,
Cotar-Friesecke-Kl\"uppelberg, \cite{CFK}]\label{th.semiclassicalCFK}
Let $\rho:\R^3\to\II$ be a probability density such that  $\sqrt{\rho}
\in H^1(\R^3)$, $N=2$ and $f:\R\to\R$ be the Coulomb or the repulsive harmonic interaction potentials. Then,
\be\label{eq.semiclassicallimit}
\begin{array}{ll}
F_{\hbar}^{HK}[\rho]
&=\underset{\psi \in H^1(\R^{6}), \psi \rightarrow \rho}{\min} \langle \psi,\big(-\frac{\hbar^2}{2}\Delta +
f(x_2 - x_1)\big)\psi\rangle \text{ }  \\
&\ds\underset{\hbar\to 0}{\to} \min_{\gamma \in \Pi_2(\R^6, \rho)} \int_{\R^{3N}}
f(x_2-x_1)d\gamma(x_1,x_2) =: F^{OT}[\rho],
\end{array}
\ee
where the set $\Pi_2(\R^6,\rho)$ denotes the set of measures $\gamma \in
\P(\R^{6})$ having $\rho$ as marginals, i.e. $(e_i)_{\sharp}\gamma = \rho$ where, for $i=1,2$, $e_i:\R^{6}=\R^3\times \R^3\to \R^3$ are the projection maps. 
\end{teo}

The main difficulty in proving Theorem \ref{th.semiclassicalCFK} is that
for any transport plan given by a density $\psi$, $\gamma = \vert
\psi(x_1,\dots,x_N) \vert^2dx_1\dots x_N$, it may be that $\psi \not\in
H^1(\R^{3N}), \psi \not\in L^2(\R^{3N})$ and $T[\psi] = \infty$.
Moreover, smoothing the optimal $\gamma$ does not work at this
level, because this may change the marginals of the problem
(\ref{eq.semiclassicallimit}).  Cotar, Friesecke \& Kl\"uppelberg
developed a smoothing technique in order to deal with this problem
without changing the marginals. A complete proof of the previous
theorem can be found in \cite{CFK}.

Let us precise somewhat the terminology. In Quantum Mechanics, the \textit{``semi-classical limit''} has a precise meaning: it is an asymptotic regime for the 
Hamiltonian dynamics of a Quantum system defined in a Hilbert space and it is given by a
\textit{Weyl-Wigner quantization} (or \textit{quantization by
deformation} studied in the more abstract context of Poisson manifolds).
In those specific cases, the limit $\hbar\to 0$ is called
\textit{``semi-classical''} limit, because the first and second order terms of an asymptotic expansion of the Hamiltonian operator is given by \textit{``classical''} terms, functions of the Hamiltonian function in a symplectic manifold \cite{EvaZwo,Rob87}. 

In DFT context, that limit seems to be up to now merely a question of re-scaling.  The minimizers of $F^{OT}[\rho]$ are candidates of \textit{ansatz} to develop approximating methods to compute the ground state energy of the \textit{Electronic Hamiltonian}.

\medskip

At that point, it is natural to define the \textit{DFT-Optimal
Transportation ground state} $E^{DFT-OT}_{0}$ \cite{CFK},

\begin{equation}\label{eq:DFTOT}
\begin{array}{ll}
E_0^{DFT-OT}[\rho]
&=\ds\inf \bigg\lbrace T[\psi] + V_{ne}[\psi] +
E_{OT}[\psi] : \psi \in \Hi \bigg\rbrace \\
&=\ds\inf \bigg\lbrace T_{QM}[\rho] + V_{ne}[\rho] + E_{OT}[\rho] :
\rho \in \mathcal{A} \bigg\rbrace.
\end{array}
\end{equation}

where $\rho$ represents the single particle density (see
\eqref{eq.singledensity}), $T_{QM}$ and $E^{OT}$ are defined,
respectively, by
\[
\ds T_{QM}[\rho] = \inf \bigg\lbrace T[\psi] : \psi \in \Hi, \psi \to
\rho \bigg\rbrace \quad \text{and}
\]
\be\label{eq:OTgroundstate}
E^{OT}[\rho] =\ds\inf \bigg\lbrace
\int_{\R^{3N}}\summ^N_{i=1}\summ^N_{i=1}f(x_j-x_i)d\gamma(x_1,\dots,x_N)
: \gamma \in \Pi((\R^d)^N,\rho) \bigg\rbrace,
\ee
where $\Pi((\R^d)^N,\rho)$ denotes the set of probability measures $\gamma:(\R^d)^N \to \R_+$ having $\rho$ has marginals. 

The problem of minimizing $E_0^{DFT-OT}$ in (\ref{eq:DFTOT}) is called
DFT-OT problem. Notice that, in the case where $f$ is the Coulomb
potential,
\[
V_{ee}[\gamma_2] \geq E_{OT}[\rho], \quad \text{for every probability
density } \rho \in H^{1}(\R^3).
\]
Finally, taking the infimum in both sides of the previous equation, we have

\begin{teo}[Cotar-Friesecke-Kl\"uppelberg, \cite{CFK}]
Consider $f(\vert x_j-x_i\vert) = 1/\vert x_j-x_i\vert$. For every
$N$, and any potential $v_R \in L^{3/2}+L^{\infty}(\R^3)$, the density
functional with electron-electron interaction energy
(\ref{eq:OTgroundstate}) is a rigorous lower bound of the
\textit{Electronic minimization problem} \eqref{eq.groundstateenergy}
\[
E_{0} \geq E^{DFT-OT}_{0}
\]
\end{teo}

It turns out now that the central question of DFT-OT problem with Coulomb potentials is to characterize the minimizers $\gamma$ of $E^{OT}[\rho]$. Those minimizers could be used as \textit{ansatz} in order to find minimizers of the Hohenberg-Kohn $F^{HK}[\rho]$. In \cite{Sei}, the physicist Seidl formulated the following conjecture. 

\begin{conj}[Weak Seidl Conjecture \cite{Sei}]\label{conj:Seidl}
There exists a deterministic minimizer of $E^{OT}[\rho]$ with Coulomb potential/cost
\[
c(x_1,\dots,x_N) = \sum^{N}_{i=1}\sum^N_{j=i+1}\dfrac{1}{\vert x_i - x_j\vert}.
\]
\end{conj}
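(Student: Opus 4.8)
The plan is to combine the Kantorovich duality and $c$-cyclical monotonicity recalled in Section~\ref{sec:DFTOT} with a structural analysis of the optimal set. First I would record that a minimizer $\gamma^\ast$ of $E^{OT}[\rho]$ exists by the direct method: the Coulomb cost is nonnegative and lower semicontinuous, the marginal constraint set $\Pi((\R^3)^N,\rho)$ is tight and weakly closed, and $\gamma\mapsto\int c\,d\gamma$ is weakly lower semicontinuous. Because both the cost and the constraint $(e_i)_\sharp\gamma=\rho$ are invariant under the permutation action of $\S_N$, one may replace $\gamma^\ast$ by its symmetrization and assume it is a symmetric optimal plan. Duality then furnishes a Kantorovich potential $u$ with $\sum_{i=1}^N u(x_i)\le c(x_1,\dots,x_N)$ everywhere and equality on a $c$-cyclically monotone set $\Gamma$ carrying $\gamma^\ast$. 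The conjecture reduces to showing that, after removing a $\gamma^\ast$-null set, $\Gamma$ is contained in a graph $\{(x,T_2(x),\dots,T_N(x)):x\in\R^3\}$ over the first coordinate --- equivalently that the first projection restricted to $\Gamma$ is essentially injective.

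The part I would carry out first, because it is genuinely tractable, is the one-dimensional case. On the line the total order can be leveraged together with the strict ``repulsivity'' inequality stated in the Introduction: were $\Gamma$ to contain two configurations sharing their first coordinate, a cut-and-paste competitor that swaps the remaining coordinates between them would strictly decrease the Coulomb cost, contradicting $c$-monotonicity; this forces the graph structure. Moreover $\Gamma$ can be described explicitly. After dividing $\rho$ by $N$ to normalize its mass, partition $\R$ into consecutive intervals $A_1<\dots<A_N$ of equal $\rho$-mass $1/N$, let $T$ be the monotone rearrangement sending $\rho|_{A_k}$ onto $\rho|_{A_{k+1}}$ for each $k$ (indices mod $N$), and set $\gamma=(Id,T,T^{(2)},\dots,T^{(N-1)})_\sharp\rho$; feasibility, i.e. $T^{(k)}_\sharp\rho=\rho$, is immediate from the cyclic construction. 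To prove optimality I would produce the dual certificate by hand: integrate the force-balance relation $u'(x)=-\sum_{k=1}^{N-1}\operatorname{sign}\!\big(x-T^{(k)}(x)\big)\,|x-T^{(k)}(x)|^{-2}$ along $\R$, and then verify the inequality $\sum_i u(x_i)\le c$ for an arbitrary configuration by moving one coordinate at a time and using convexity of $t\mapsto 1/t$.

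For general dimension I would try to pass through the first-order optimality conditions in the ``co-motion'' form familiar from the physics literature: the maps $f_k:=T^{(k)}$ should obey the group law $f_j\circ f_k=f_{j+k}$ and, at least formally, the force-balance equation $\nabla u(x)=\sum_{k=1}^{N-1}(x-f_k(x))\,|x-f_k(x)|^{-3}$ with $u$ the Kantorovich potential. The strategy would be: (i) extract from the dual problem enough regularity of $u$ (local semiconcavity away from the coincidence set); (ii) deduce from stationarity on $\Gamma$ that for $\rho$-a.e.\ $x$ the remaining coordinates are critical points of $(y_2,\dots,y_N)\mapsto c(x,y_2,\dots,y_N)-\sum_i u(y_i)$; and (iii) show this critical-point system has a unique solution depending measurably on $x$, which then defines the $T_k$. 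Under the extra hypothesis that $\rho$ is radial one can hope to reduce (iii) to a one-dimensional ODE system in the radial variable together with a prescription of the angular configuration, bringing the problem back to the line.

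The main obstacle I anticipate is precisely step (iii) in dimension $d\ge 2$: there is no order-based competitor argument to replace the one used on the line, the Kantorovich potential may fail to be differentiable on a large set, and the stationarity system can in principle admit a continuum of solutions, so that the optimal $\gamma^\ast$ may be forced to split mass at a point. I would therefore expect the realistic outcome to be a proof of the conjecture for $d=1$ and all $N$, and for radially symmetric data, with the general case requiring either a new rigidity mechanism substituting for the order structure or, conversely, a counterexample exhibiting an irreducibly non-deterministic optimal plan.
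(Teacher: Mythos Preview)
The statement is a \emph{conjecture}, not a theorem: the paper does not prove it and presents it as open in full generality. What you have written is a research programme rather than a proof, and you yourself flag the gap at step~(iii). There is therefore no ``paper's own proof'' to compare against; instead one should compare your partial claims with the partial results the paper surveys.

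Your $d=1$ outline is in the right spirit and matches the result of Colombo, De Pascale and Di Marino (Theorem~\ref{teo:1DN}), but the mechanism you describe is too coarse. A single two-point swap on $\Gamma$ does not by itself force injectivity of the first projection; the actual argument (sketched in the proof of Theorem~\ref{teo:1DN}) establishes a stronger \emph{well-ordering} property of any two points in the support of a symmetric optimizer, using all permutations and all partitions simultaneously, and then reads off the map from that. Your proposed dual certificate via integrating the force-balance ODE is plausible but is not how the cited proof proceeds, and verifying the global inequality $\sum_i u(x_i)\le c$ ``one coordinate at a time'' is not straightforward for this cost.

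Your expectation for the radial case is where the proposal is most misleading. The reduction to a one-dimensional problem is correct (Lemma~\ref{lemma:radialcase}), but the reduced cost $c_1(r_1,\dots,r_N)$ is \emph{not} the Coulomb cost on $\R$ and is not covered by the $d=1$ theorem. The paper explicitly records that the Strong Seidl Conjecture --- the specific radial map --- is \emph{false} for radial densities concentrated on thin annuli (see the discussion following Conjecture~\ref{conj.Seidl}), and even the weak version is not settled there. So ``bringing the problem back to the line'' does not close the radial case.

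For general $d\ge 2$, your step~(iii) is precisely the open obstruction. Theorem~\ref{th:sym:Pass} shows that for symmetric costs of this type the optimal support can have Hausdorff dimension at least $2N-2$, which is compatible with the stationarity system admitting a continuum of solutions over each $x$; no rigidity mechanism replacing the order structure of $\R$ is currently known.
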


 In other words,  for Coulomb-type electron-electron interactions, Seidl conjectures\footnote{There is also a stronger version of the conjecture, the Strong Seidl Conjecture, where the author describes explicitly a possible optimal map in the radially symmetric case: for the precise statement see the conjecture \ref{conj.Seidl} and the lemma \ref{lemma:radialcase} in section \ref{sec:Coulomb}.} state that, at least for radially symmetric measures $\rho$, there exists an optimal measure $\gamma$ for $E^{OT}$ of form $\gamma = (Id\times
T_1\times \dots \times T_{N-1})_{\sharp}\rho$, with
$T_1,\dots,T_{N-1}:\R^3\to\R^3$. This maps $T_i$ are called co-motion functions and, due to the symmetries of the problem, a natural group law is required for them, namely that for every $i, j$ there exists $k$ such that $T_i \circ T_j = T_k$; this is satisfied if for example $T_i=T^{(i)}$ (cyclical case, see Equation \eqref{pb:MKcyc}).

It is natural to ask the same kind of question for other cost functions. In the following we mention some interesting results: \medskip
\begin{itemize}
\item[(i)] In the $2$-electrons case, the minimizers of $E^{DFT-OT}_{0}$ are well-understood \cite{CFK} and there
exists a map which correlates the density of a given minimization plan of $E^{OT}[\rho]$, see Theorem \ref{th:cotar}. \medskip
\item[(ii)] Gangbo \& \'Swi\c{e}ch (without being aware of Seidl) reinforces the conjecture \ref{conj:Seidl} showing in \cite{GaSw} that  for the attractive harmonic cost $c(x_1,\dots,x_N) = \sum^{N}_{i,j=1} \vert x_i - x_j\vert^2$ there exists a unique optimal plan, and it is a deterministic one. \medskip
\item[(iii)] Colombo, De Pascale \& Di Marino \cite{CoDePDMa} answer affirmatively (strong) Seidl conjecture in the one dimensional case $(d=1)$, as we will describe in section \ref{sec:Coulomb}.  \medskip
\item[(iv)] The authors proved that \textit{``Seidl's conjecture''} is not true for the repulsive harmonic cost $c(x_1,\dots,x_N) = \sum^{N}_{i,j=1} -\vert x_i - x_j\vert^2$ (see theorem \ref{OTHarm:mainthm}). Moreover, for the same cost and for a particular single particle density, there exists \textit{``fractal-like''} optimal transport maps, i.e. an optimal map $T$ which are not differentiable at every point. We are going to state and prove those statements, see section \ref{sec:OTHarm}. 
\end{itemize}

% !TEX root = ../book.tex
\section{DFT meets Optimal Transportation Theory}
\label{sec:DFTOT}

\subsection{Couplings and Multi-marginal Optimal Transportation problem}

\begin{deff}[Coupling]\label{def:coupling}
Let $(X,\mu),(Y,\nu)$ be two probability spaces. A coupling $\gamma$ is a measure on the product $X\times Y$ such that $\gamma$ admits $\mu$ and $\nu$ as marginals, respectively, on
$X$ and $Y$, i.e.
\[
(e_1)_{\sharp}\gamma = \mu, \quad (e_2)_{\sharp}\gamma = \nu
\]
where $e_1,e_2$ are respectively the projection maps $(x,y) \in
X\times Y \mapsto x \in X$ and $(x,y) \in X\times Y \mapsto y \in Y$.
\end{deff}

It is easy to show that a coupling of $\mu$ and $\nu$ is equivalent to
\begin{itemize}
\item[(i)] For all measurable sets $A\subset X$ and $B\subset Y$, one
has $\gamma[A\times Y] = \mu[A]$ and  $\gamma[X\times B] = \nu[B]$.
\item[(ii)] For all integrable measurable functions $u,v$ on $X,Y$,
\[
\ds\int_{X\times Y} (u(x) + v(y))d\gamma(x,y) = \int_X u(x)d\mu(x) +
\int_Y v(y)d\nu(y)
\]
\end{itemize}

A trivial coupling of $(X,\mu)$ and $(Y,\nu)$ is given by the product
measure $\mu\times\nu$. The symbol $\rho \to \psi$ defined in eq. \eqref{eq:rho1} means that the measure $\gamma = \vert \psi(x_1,\dots,x_N)\vert^2dx_1\dots dx_N$ is a coupling, for instance, between $\rho_1$ and $\rho_{k-1}$. In the classical theory of $2$-marginals optimal transport \cite{AGS,VilON} a coupling is also called a transport plan. 

In the case of a  $2$-molecular system with Coulomb interactions, up to passage to the limit $\hbar\to 0$, the theorem \ref{th.semiclassicalCFK} states that there exists a special coupling $\gamma$, minimizer of \eqref{eq.semiclassicallimit}, which is concentrated on the graph of a measurable function. That motivates the next definition

\begin{deff}[Deterministic Coupling]\label{def:deterministicoupling}
A coupling $\gamma$ of two probability spaces $(X,\mu)$ and $(Y,\nu)$ is said to be deterministic if
does there exists a measurable function $T:X\to Y$ such that $\gamma = (Id,T)_{\sharp}\mu$.
\end{deff}

In that terminology, the main question is to understand when the coupling given by the DFT-Optimal Transport \eqref{eq:OTgroundstate} is deterministic. Moreover, Seidl's conjecture can be rephrased in the following way: \textit{Does exist a deterministic coupling in the Optimal Transportation problem \eqref{eq:OTgroundstate} for Coulomb costs?}

\subsection{Multimarginal Optimal Transportation Problem}
The \textit{``semi-classical limit"} in the $2$ particles case (thm. \ref{th.semiclassicalCFK}) and Seidl's Conjecture \ref{conj:Seidl}, motivate us to introduce a more general coupling problem.

We denote by $N$  the number of particles (marginals), $d$ the dimension of the
space where those particles live (typically $d=3$) and $I = \lbrace 1,\dots,\N\rbrace$ an index set. Take 
$x_i:\R^d\rightarrow\R$ as the position of a single particle, then the
configuration of a system of $N$-particles is given by a vector $x =
(x_1,x_2,\dots, x_N)$ or, more precisely, by a function $x:I \to \R^d$.

Let us consider the distribution of the $i$-th particle given by a density function $\rho_i:\R^{d}\rightarrow \01$, $i\in I$ and $c:\R^{Nd}\times \R^{Nd}\rightarrow \II$, a continuous cost
function. We set $\mu_i = \rho_i\mathcal{L}^d, \forall i \in I$,
where $\mathcal{L}^d$ is the Lebesgue measure on $\R^d$, and we want to characterize the optimal coupling of the Monge-Kantorovich problem
\be\label{pb:MKN}
(\MK) \quad  \inf_{\gamma \in \Pi(\R^{dN},\mu_1,\dots,\mu_N)} 
\int_{(\R^{d})^N}c(x_1,\dots,x_N)d\gamma(x_1,\dots,x_N),
\ee
where $\Pi(\R^{dN},\mu_1,\dots,\mu_N)$ denotes the set of couplings $\gamma(x_1,\dots,x_N)$ having $\mu_i$ as marginals, for all $i \in I$,
i.e. the set $\Pi(\R^{dN},\mu_1,\dots,\mu_N) = \big\lbrace \gamma \in \P(\R^{dN})
: (e_i)_{\sharp}\gamma = \mu_i, \forall i \in I \big\rbrace$ and, for
all $i \in I$, $e_i:\R^{dN}\to\R^d$ are the projection on the $i$-th
component and $\P(\R^{dN})$ denotes the set of probability measures in $\R^{dN}$. The elements of $\Pi(\R^{dN},\mu_1,\dots,\mu_N)$ are called transport plans. In the DFT-OT problem it was mentioned that an interesting case is when all
marginals are the same and equal to a measure $\mu$ on $\R^{d}$ (it is quite natural in the DFT framework as the electrons are indistinguishable). In that case, we
denote the set of transport plans simply by $\Pi_N(\mu)$.

Notice that the existence of a minimum for \eqref{pb:MKN} is not a big deal and it is quite standard in Optimal Transport Theory. Indeed, $\Pi(\R^{dN}, \mu_1,\dots,\mu_N)$ \textit{is trivially non empty}, since the independent coupling $\mu_1\times\dots\times\mu_N$ belongs to $\Pi(\R^{dN}, \mu_1,\dots,\mu_N)$; the set $\Pi(\R^{dN}, \mu_1,\dots,\mu_N)$ is convex  and compact for the weak*-topology thanks to the imposed marginals; and moreover the quantity to be minimized $\gamma \mapsto \int c \, d \gamma$ is linear with respect to $\mu$. Hence, we can guarantee the existence of a minimum for (\ref{pb:MKN}) by imposing a very weak hypothesis on the cost function, such as lower-semicontinuity.

However, we are interested in characterizing some class of optimal transport plans or, at least, understand when that optimal coupling is \textit{deterministic}. Thus, we would like to study when minimizers for $(\MK)$
correspond to minimizers the multi-marginal Monge problem 
\be\label{pb:MN}
(\MN) \quad \min_{T = (T_i)^{N}_{i=1} \in \mathcal{T}_N} \int_{(\R^d)^N}
c(x,T_2(x),\dots,T_{N}(x)) d\mu_1(x),
\ee
where  $\mathcal{T}_N = \big\lbrace T = (T_i)^{N}_{i=1} : T_i \textit{
is a Borelian map such that } T_{i\sharp}\mu_1 = \mu_i, \forall i \in \lbrace 1,\dots,N \rbrace \text{ and } T_1 = Id  \big\rbrace$. As usual, it is easy to see that the set of \textit{admissible plans in \eqref{pb:MKN} ``includes" the set of Monge transport maps in \eqref{pb:MN}}: in fact, given a transport map  $T = (Id, T_2,\dots,T_N)$,  we
can consider a measures $\gamma_T$ on $(\R^{d})^N$ defined by
$\gamma_T = (\operatorname{Id}\times T_2\times \dots \times
T_N)_{\sharp}\mu$. Let $h:(\R^{d})^N\rightarrow \R$ be a
$\gamma_T$-measurable function, then
$$\int_{(\R^{d})^N} h(x_1,\dots,x_N)d\gamma_T(x_1\dots,x_N) =
\int_{\R^{d}}h(x_1,T_2(x_1),\dots,T_N(x_1))d\mu_1(x_1); $$
and if we have $\gamma_T$-measurable function $f:\R^d\rightarrow [0,\infty]$
$$ \forall i \in I, \quad \int_{(\R^{d})^N}f(x_i)d\gamma_T(x_1,\dots,x_N) = \int_{\R^d}
f(x_i)d\mu_i(x_i), $$
and so, $\gamma_T$ is a transport plan. In particular, the value of the minimization over Monge-Kantorovich couplings is less than or equal to the value of the minimization over the deterministic couplings
\[
\min(\MK) \leq \inf(\MN).
\]
Contrary to in \eqref{pb:MKN}, the existence of the optimal deterministic coupling in (\ref{pb:MN}) is not obvious and the difficulties do not lie on the ``multi-marginality" of the problem. In
fact, at that point, it is not possible to apply standard methods in
Calculus of Variations even in the classical $2$-marginals case in order to guarantee the existence of the
minimum (\ref{pb:MN}). 

Intuitively, the difference between  \eqref{pb:MKN} and \eqref{pb:MN} is that in
$\MN$ almost every point $x_1 \in \R^d$ is coupled with exactly one
point $x_i \in \R^d$ for each $i \in \lbrace 2, \dots, N\rbrace$
whereas throughout the problem $(\ref{pb:MKN})$ we allow the splitting
of mass in the ``transportation'' process.

The problem (\ref{pb:MKN}), when $N=2$, was studied by L. Kantorovich
in 1940's \cite{Kant1,Kant2}, without being aware of the famous article of Monge \cite{Monge}. In 1975, L. Kantorovich won the Nobel
Prize together with C. Koopmans \textit{``for their contributions to
the theory of optimum allocation of resources"}.

An important disadvantage in using the relaxed approach \eqref{pb:MKN} in the multi-marginal OT is that the set of optimal transportation plans could be very large and we could need to select some special classes of transportation plans.

In the $2$-marginal setting, the two formulations \eqref{pb:MKN} and \eqref{pb:MN} are proved to have the same value for general Polish Spaces by Pratelli in \cite{Pra12}. 

\begin{teo}[Monge equals Monge-Kantorovich in the 2-marginals case, \cite{Pra12}]\label{thm:PratelliMongeKantorovich} Let $(X,\mu), (Y,\nu)$ be Polish spaces, where $\mu \in \P(X)$ and $\nu \in \P(Y)$ are non-atomic probability measures. If $c:X\times Y\to \R\cup\lbrace \infty\rbrace$ is a continuous cost, then
\begin{multline}
\min\Bigg\lbrace \int_{X\times Y} c(x,y)d\gamma(x,y) \text{ } : \text{ } \gamma \in \Pi(X\times Y,\mu,\nu) \Bigg\rbrace = \\ 
= \inf\Bigg\lbrace \int_{X}c(x,T(x))d\mu(x), \text{ }  :  \begin{array}{ll}
T_{\sharp}\mu = \nu ,\\
T:X\to Y \text{ Borel }
\end{array} \Bigg\rbrace \text{ } 
\end{multline}
where $\Pi(X\times Y)$ denotes the set of $2$-marginals transport plans $\gamma \subset X\times Y$ having $\mu$ and $\nu$ as marginals.
\end{teo}

In the particular case of the multi-marginal OT problem in (\ref{pb:MN}) and (\ref{pb:MKN}) when all marginals are equal to $\rho\mathcal{L}^d$, we can apply the previous theorem (\ref{thm:PratelliMongeKantorovich}) on the Polish spaces $X = \R^d, Y= (\R^d)^{N-1}$ with measures $\mu = \rho\mathcal{L}^d$ and $\nu = (e_2, \ldots , e_n)_{\sharp} \gamma$, for every $\gamma \in \Pi(\R^{dN}, \rho \mathcal{L}^d)$, and obtain the following corollary 
\begin{cor}\label{cor:MNMK}
Let $\mu_1 = \dots = \mu_N = \rho\mathcal{L}^d$ be probability measures on $\R^d$ with density $\rho$ and $c:(\R^d)^N\to\R$ be a continuous cost function. Then, 
\[ \min(\MK) = \inf(\MN).\] 
\end{cor}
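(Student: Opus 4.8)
The plan is to collapse the $N$-marginal problem to a classical $2$-marginal one by grouping the last $N-1$ coordinates into a single variable, and then quote Pratelli's theorem (Theorem~\ref{thm:PratelliMongeKantorovich}). One inequality, $\min(\MK) \le \inf(\MN)$, is already recorded in the text (each admissible Monge map $T$ induces the plan $\gamma_T$ with the same cost), so the only content is the reverse bound $\inf(\MN) \le \min(\MK)$.

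First I would fix an optimal plan $\gamma$ for $(\MK)$ in $\Pi_N(\rho\mathcal{L}^d)$ --- existence follows from lower semicontinuity of the cost and weak-$*$ compactness of $\Pi_N(\rho\mathcal{L}^d)$ as discussed above, and if the infimum is not attained a minimizing sequence works identically. Then I would set $X := \R^d$, $Y := (\R^d)^{N-1}$, identify $\R^{dN}$ with $X \times Y$ by singling out the first coordinate, and read $\gamma$ as a coupling of $\mu := \rho\mathcal{L}^d$ on $X$ with $\nu := (e_2,\dots,e_N)_{\sharp}\gamma$ on $Y$; under the same identification $c$ becomes a continuous cost $\tilde c$ on $X \times Y$, namely $\tilde c(x_1,(x_2,\dots,x_N)) = c(x_1,\dots,x_N)$. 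Both $X$ and $Y$ are Polish, $\mu$ is non-atomic since $\rho\mathcal{L}^d \ll \mathcal{L}^d$, and $\nu$ is non-atomic because its first marginal is $(e_2)_{\sharp}\gamma = \rho\mathcal{L}^d$ (an atom of $\nu$ at $y=(y_2,\dots,y_N)$ would force $\rho\mathcal{L}^d(\{y_2\}) \ge \nu(\{y\}) > 0$). Hence the hypotheses of Theorem~\ref{thm:PratelliMongeKantorovich} are satisfied.

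Applying that theorem to $(\mu,\nu,\tilde c)$ gives $\inf\{\int_X \tilde c(x,S(x))\,d\mu : S:X\to Y \text{ Borel},\ S_{\sharp}\mu=\nu\} = \min\{\int_{X\times Y} \tilde c\,d\gamma' : \gamma'\in\Pi(X\times Y,\mu,\nu)\} \le \int_{X\times Y}\tilde c\,d\gamma = \min(\MK)$, the last step because $\gamma$ itself lies in $\Pi(X\times Y,\mu,\nu)$. To finish I would unpack any admissible $S:X\to Y$ as $S=(T_2,\dots,T_N)$ with $T_i:\R^d\to\R^d$ Borel, which is automatic from the product structure of $Y$; comparing the $i$-th marginals of $S_{\sharp}\mu$ and of $\nu$ shows $(T_i)_{\sharp}\mu = (e_i)_{\sharp}\gamma = \mu_i$, so $(Id,T_2,\dots,T_N)\in\mathcal{T}_N$, and its $(\MN)$-cost equals $\int_X\tilde c(x,S(x))\,d\mu$. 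Taking the infimum over such $S$ therefore yields $\inf(\MN)\le\min(\MK)$, which together with the trivial inequality proves the claim.

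The only genuinely delicate point is verifying that the grouped marginal $\nu$ inherits non-atomicity, so that Pratelli's theorem is applicable; once that is checked, everything else is bookkeeping about the product structure of $(\R^d)^{N-1}$ and the blockwise correspondence between the marginals of $S_{\sharp}\mu$ and the prescribed marginals $\mu_i$. I do not expect any serious obstacle beyond this.
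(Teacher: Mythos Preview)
Your proposal is correct and follows exactly the route indicated in the paper: set $X=\R^d$, $Y=(\R^d)^{N-1}$, take $\mu=\rho\mathcal{L}^d$ and $\nu=(e_2,\dots,e_N)_{\sharp}\gamma$ for a chosen $\gamma\in\Pi_N(\rho\mathcal{L}^d)$, and invoke Theorem~\ref{thm:PratelliMongeKantorovich}. You simply flesh out the details the paper leaves implicit, in particular the non-atomicity of $\nu$ and the bookkeeping that any Borel $S:X\to Y$ with $S_{\sharp}\mu=\nu$ unpacks into an admissible tuple in $\mathcal{T}_N$.
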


Notice that, in general, the equivalence between \eqref{pb:MKN} and \eqref{pb:MN} is not an immediate consequence of the theorem \ref{thm:PratelliMongeKantorovich}. In particular, remark that the image measure of $Y = \R^{d(N-1)}$ is not prescribed, but only its marginals.

Finally, we conclude this section by giving an example of a particular continuous cost function where the optimal coupling $\gamma$ is not deterministic, even in the $2$ particles case. The nonexistence of an optimal transport maps typically happens when minimizing sequences of $\gamma$ exhibit strong oscillations. 

\begin{exam}[G. Carlier]\label{carlier}
Suppose $\mu$ uniformly distributed in $[0,1]$, $\nu$ uniformly distributed in $[-1,1]$ and the continuous cost $c:\R^2\to \R$ given by
\[
c(x_1,x_2) = (x_1^2-x_2^2)^2.\]
Since $c \geq 0$ we have that an optimal plan for this cost is given by $\overline{\gamma} = \mu\otimes(\frac{1}{2}\delta_{x} + \frac{1}{2}\delta_{-x})$ since $\int c \, d \overline{\gamma}=0$. We want to show there doesn't exit a map $T$ such that the cost attains zero: suppose in fact that $\int (x^2-T(x)^2)^2 \, dx =0$ but then one should  have $T(x) \in \lbrace x, -x \rbrace$ almost everywhere. Hence,
\[
T(x) = (\chi_A - \chi_{[0,1]\backslash A})x
\]
for a measurable $A \subset [0,1]$. But then $T_{\sharp} \mu = \mathcal{L}^1|_{A} + \mathcal{L}^1|_{\tilde{A}} \neq \nu$, where $x \in \tilde{A}$ iff $-x\in [0,1] \setminus A$, and so we obtain a contradiction since $T$ wouldn't admissible.
It is however easy to construct admissible maps $T_n$, linear on $[i/2n, (i+1)/2n)$, such that $\int c(x_1,T_n(x_1))d\mu(x_1)$ tends to zero.

Remark that, we state by simplicity the example in the $2$-marginal case, but the same conclusion and arguments apply in the larger interactions case under the cost 
$$c(x_1,\dots,x_N) = \sum^N_{i=1}\sum^N_{j=i+1}(x_i^2-x_j^2)^2. $$

\end{exam} \medskip

\begin{figure}[htbp]

\begin{tabular}{@{}c@{\hspace{2mm}}c@{}}
\centering
\includegraphics[ scale=0.3]{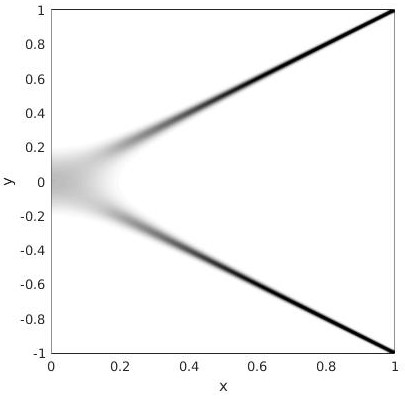}&
\includegraphics[ scale=0.3]{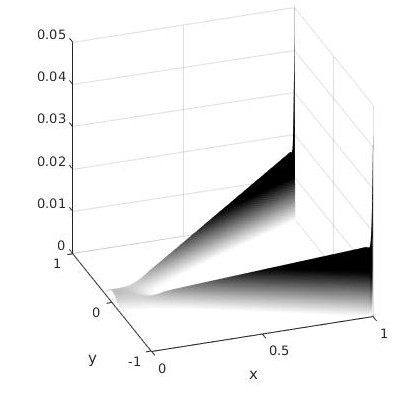}\\

\end{tabular}
\caption{\textit{(example \ref{carlier} ) Left: support of the optimal coupling $\overline{\gamma}$ . Right: optimal coupling $\overline{\gamma}$.
The simulation has been performed by using the iterative method described in section \ref{numerics}.
}}
\label{figure:carlier}
\end{figure}

\subsection{Dual formulation} We consider the formal convex dual formulation of \eqref{pb:MKN}

\begin{equation}\label{pb:KN}
(\KN) \quad \sup \Bigg\lbrace \int_{\R^d} \sum^N_{i=1}u_i(x_i)d\mu_i(x_i)
: \sum^N_{i=1}u_i(x_i) \leq c(x_1,\dotsc,x_N) \Bigg\rbrace,
\end{equation}
where $u_i \in L^1(\R^d,\mu_i)$, called Kantorovich potentials or simply potentials, are upper semicontinuous for all $i \in \lbrace 1,\dots,N\rbrace$.
This is often called the dual problem of (\ref{pb:MN}). 

Notice that, when all marginals are equal to $\mu_1 = \rho(x)\mathcal{L}^d$ and the cost is symmetric, we can assume the Kantorovich potentials $u_i(x_i)$ are all the same $u(x_i)$, and we can rewrite the constraint in \eqref{pb:KN} as 
\[
\sum^N_{i=1} u(x_i) \leq c(x_1,\dots,x_N). \medskip
\]

Among all Kantorovich potentials, a particular class is of special interest.

\begin{deff}[$c$-conjugate function]
Let $c:(\R^d)^N\to \R\cup\lbrace \infty \rbrace$ a Borel function. We say that the $N$-tuple of functions $(u_1,\dots,u_N)$ is a $c$-conjugate function for $c$, if
\[
u_i(x_i) = \inf \bigg\lbrace c(x_1,\dots,x_N) - \sum^N_{j=1, j\neq i}u_j(x_j) : x_j \in X_j \ , \ j\neq i \bigg\rbrace, \ \ \forall i = 1,\dots,N.
\]
\end{deff} \medskip

As we will see in the next theorem \ref{th:DualKellerer}, the optimal potentials $u_1,\dots,u_N$ in $(\mathcal{K}_N)$ are $c$-conjugates, and therefore semi-concave. So, they admit super-differentials $\overline{\partial}u_i(x_i)$ at each point $x_i \in \R^d$ and then, at least in the compact case, we can expect that for each $x_i \in \R^d$, there exists $(x_j)_{j\neq i} \in \R^{d(N-1)}$, such that 
\[
u_i(x_i) = c(x_1,\dots,x_N) - \sum_{j \neq i} u_j(x_j).
\] 

Moreover, if $c:(\R^d)^N\to\R$ is differentiable and $\vert u_i(x_i)\vert < \infty$, for some $x_i \in \R^d$, $u_i$ is locally Lipschitz. The well-known general result linking both \eqref{pb:MKN} and \eqref{pb:KN} has been proven by Kellerer in 1984.

\begin{teo}[Kellerer, \cite{Kel}]\label{th:DualKellerer}
Let $(X_1,\mu_1),\dots,(X_N,\mu_N)$ be Polish spaces equipped with Borel probability measures $\mu_1,\dots,\mu_N$. Consider $c:X_1\dots X_N \to \R$ a Borel cost function and assume that $\overline{c} = sup_X c < \infty$ and $\underline{c} = \inf_X c > - \infty$. Then,

\begin{itemize}
\item[(i)] There exists a solution $\gamma$ of the Monge-Kantorovich
problem \eqref{pb:MKN} and a $c$-conjugate solution $(u_1, u_2, ...,
u_N)$ of the dual problem (\ref{pb:KN}). 

\item[(ii)] ``Duality holds",
\be
\ds\inf \bigg\lbrace \int_{X_1\dots X_N}cd\gamma \ : \ \gamma \in \Pi(\mu_1,\dots,\mu_N) \bigg\rbrace = \sup \bigg\lbrace \sum^N_{i=1} \int_{X_i}u_i(x_i)d\mu_i : u_i \in \mathcal{D} \bigg\rbrace,
\ee
where $\mathcal{D}$ is the set of functions $u_i:\R^d\to\R$, $i=1,\dots,N$, such that  
\[
\sum^N_{i=1}u_i(x_i) \leq c(x_1,\dots,x_N), \quad \textit{and} \quad \dfrac{1}{N}\overline{c} - (\overline{c}-\underline{c})\leq \sum^N_{i=1}u_i(x_i)  \leq \frac{1}{N} \overline{c}.
\]

\item[(iii)] For any solution $\gamma$ of \eqref{pb:MKN}, any conjugate solution of \eqref{pb:KN} and $\gamma$ a.e. $(x_1,\dots,x_N)$, we have
\[
\sum^{N}_{i=1} u_i(x_i) = c(x_1,\dots,x_N).
\]
\end{itemize}
\end{teo}

We remark that despite its generality,  Kellerer's theorem  can not be applied directly to Coulomb-type costs, since Coulomb cost does not satisfies the boundedness hypothesis $\overline{c} = sup_X c < \infty$ and can not be bounded by functions in $L^1(\R^d,\rho\mathcal{L}^d)$.

A generalization of Kellerer's theorem due to M. Beiglboeck, C. Leonard and W. Schachermayer \cite{BeiLeoSch}, extends this duality for more general costs, but also cannot be applied in this context.

Recently, De Pascale \cite{DeP} extended the proof of duality which can be applied also for Coulomb-type costs, see Theorem \ref{lem:DualDeP}. In fact, suppose $\mu = \rho(x)dx$ a probability measure on $\R^d$ which does not give mass to sets of cardinality less than  or equal to $d-1$, then
\be\label{eq:DualDeP}
\min_{\gamma \in \Pi_N(\mu)} \int_{(\R^d)^N}\sum^N_{i=1}\sum^N_{j=i+1}\dfrac{1}{\vert x_i-x_j\vert}d\gamma = \max_{u \in \tilde{\mathcal{D}}} \int_{\R^d}Nu(x)\rho(x)dx
\ee
where $\tilde{\mathcal{D}}$ is the set of potentials  $u \in L^1(\R^d,\mu)$ such that
\be\label{eq:ColConstaintDual}
\sum^N_{i=1}u(x_i) \leq \sum^N_{i=1}\sum^N_{j=i+1}\dfrac{1}{\vert x_j-x_i\vert}, \quad \otimes^N_{i=1}\rho-\textit{ almost everywhere.}
\ee
Moreover, we get the conclusions $(i)$ and $(iii)$ of the theorem \ref{th:DualKellerer}. 
Physically, the constraint \eqref{eq:ColConstaintDual} means that, at optimality, the allowed $Nd$ configuration space should be a minimum of the classical potential energy.

\begin{oss} More generally, the main theorem in \cite{DeP} shows that the Kantorovich duality holds, for instance, for costs of form 
\[
\sum^N_{i=1}\sum^N_{j=i+1}\dfrac{1}{\vert x_j-x_i\vert^s}, \quad s\geq 1.
\]
\end{oss}

\medskip

\subsection{Geometry of the Optimal Transport sets} $\quad$ Our goal in this section is to present the relation between the support of a coupling $\gamma$ and optimality in the Monge-Kantorovich problem \eqref{pb:MKN}. In the following, we will just summarize key results necessary to present recent developments of optimal transportation with Coulomb and repulsive harmonic-type costs. 

Roughly speaking, it is well-known in optimal transport theory with $2$-marginals that, for a wide class of costs $c$, a coupling $\gamma$ is optimal if, and only if, the support of $\gamma$ is concentrated in a $c$-cyclically monotone set \cite{AGS, Pra07, SchTei}:

\begin{deff}[$c$-cyclically monotone, $2$-marginal case]\label{cmonotone2marginals}
Let $X,Y$ be Polish spaces, $c:X\times Y \to \R\cup\lbrace\infty\rbrace$ be a cost function and $\Gamma$ be subset of the product space $X\times Y$. We say that $\Gamma$ is a $c$-cyclically monotone set if, for any finite couples of points $\lbrace (x_i,y_i)\rbrace^{M}_{i=1} \subset \Gamma$,
\[
\sum^M_{i=1}c(x_i,y_i) \leq \sum^M_{i=1} c(x_i,y_{\sigma(i)}). \medskip
\]
\end{deff} 

In \cite{Passthesis}, Pass suggested a possible extension of the concept of $c$-cyclically monotone sets for the multi-marginal case.  For our proposes, it will be enough to consider this notion in $\R^d$.

\begin{deff}[$c$-cyclically monotone set]\label{def:setcyclicmonotone} 

Let $c:\R^{dN}\to \R$ be a cost function. A subset $\Gamma \subset \R^{dN}$ is said to be $c$-monotone with respect to a partition $(p,p^c), p \subset I$, if, for all $x = (x_1,\dots,x_N), y = (y_1,\dots, y_N) \in \Gamma$
\[\label{eq:cciclicalymonotonicity}
c(x)+c(y) \leq c(X(x,y,p)) + c(Y(x,y,p)),
\]
where $X(x,y,p),Y(x,y,p) \in \R^N$ are functions obtained from $x$ and $y$ by exchanging their coordinates on the complement of $p$, namely
\[
X_i(x,y,p) = 
\Bigg\lbrace\begin{array}{ll}
x_i ,\text{     if     } i \in p\\
y_i, \text{     if     } i \in p^c
\end{array}
\quad \text{and} \quad
Y_i(x,y,p) = 
\Bigg\lbrace\begin{array}{ll}
y_i ,\text{     if     } i \in p\\
x_i, \text{     if     } i \in p^c
\end{array} 
\quad \forall i \in I
 \]  
 
We say that $\Gamma \subset \R^{dN}$ is $c$-cyclically monotone, if it is $c$-monotone with respect to any partition $(p,p^c)$, $p\subset I$.
\end{deff}
\medskip

The notion of $c$-cyclical monotonicity in the multi marginal setting was studied by Ghoussoub, Moameni, Maurey, Pass \cite{GhoMau,GhoMoa, Passthesis, Pass2} and successfully applied in the study of decoupling PDE systems \cite{GhoPass}. For Coulomb costs, it was useful in order to characterize the optimal transport maps in the one-dimensional case \cite{CoDePDMa}.

More recently, Beiglboeck and Griessler \cite{BeiGri}, presented a new notion called \textit{finistic optimality} inspired by the martingale optimal transport, which is equivalent, in the $2$-marginals case, to the definition \ref{cmonotone2marginals}.

The next proposition gives a necessary condition on the support for an optimal transport plan $\gamma$. For the proof, which is based on the 2-marginal result \cite{SmiKno}, we refer the reader to Lemma 3.1.2 of \cite{Passthesis}.

\begin{prop}[Support of transport plan are $c$-cyclically monotonic]\label{prop:optccyclicmonotone}
Let $c:(\R^{d})^N\to\II$ be a continuous cost and $\mu_1,\dots,\mu_N$ absolutely continuous probability measure on $\R^d$. Suppose that $\gamma \in \Pi((\R^d)^N,\mu_1,\dots,\mu_N)$ is an optimal transport plan for Multimarginal Monge-Kantorovich Problem ($\ref{pb:MKN}$) and assume $(\MK) < \infty$. Then $\gamma$ is concentrated in a $c$-cyclically monotone set.
\end{prop}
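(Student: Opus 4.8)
The plan is to reduce the multi-marginal statement to the classical two-marginal result about $c$-cyclical monotonicity of supports of optimal plans, exactly as in Lemma 3.1.2 of \cite{Passthesis}. The key observation is that the multi-marginal $c$-cyclical monotonicity of Definition \ref{def:setcyclicmonotone} is built from pairwise swaps of blocks of coordinates: fixing a partition $(p,p^c)$ of the index set $I$ amounts to fixing all coordinates indexed by $p$ and swapping the coordinates indexed by $p^c$ between two points of $\Gamma$. This is precisely a two-point $c$-monotonicity inequality for a naturally associated two-marginal problem.

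\medskip

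First I would fix a partition $(p,p^c)$ with $p \subset I$, $p \neq \emptyset$, $p \neq I$, and regroup the variables: write $X := \prod_{i\in p}\R^d$ and $Y := \prod_{i\in p^c}\R^d$, so that $(\R^d)^N \cong X\times Y$, and set $\mu := (e_p)_\sharp\gamma \in \P(X)$, $\nu := (e_{p^c})_\sharp\gamma\in\P(Y)$, where $e_p,e_{p^c}$ are the projections onto the grouped coordinates. Viewing $c$ as a function $\tilde c$ on $X\times Y$ and $\gamma$ as an element of $\Pi(X\times Y,\mu,\nu)$, I claim $\gamma$ is optimal for this two-marginal problem with cost $\tilde c$. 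Indeed, any competitor $\tilde\gamma\in\Pi(X\times Y,\mu,\nu)$ is in particular a probability measure on $(\R^d)^N$ whose marginals on each single factor $\R^d$ agree with those of $\gamma$ (the $i$-th single marginal is recovered from either $\mu$ or $\nu$ depending on whether $i\in p$ or $i\in p^c$), hence $\tilde\gamma\in\Pi((\R^d)^N,\mu_1,\dots,\mu_N)$, so $\int \tilde c\,d\tilde\gamma = \int c\,d\tilde\gamma \geq (\MK) = \int c\,d\gamma = \int\tilde c\,d\gamma$. This is the step that uses the hypothesis $(\MK)<\infty$ and continuity of $c$, ensuring $\tilde c$ is a legitimate lower semicontinuous (indeed continuous) cost for which the two-marginal theory applies.

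\medskip

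Next I would invoke the classical two-marginal result (as recalled around Definition \ref{cmonotone2marginals}, e.g. \cite{AGS, Pra07, SchTei, SmiKno}): since $\mu,\nu$ have the right structure and $\tilde c$ is continuous with $\int\tilde c\,d\gamma<\infty$, optimality of $\gamma$ forces $\gamma$ to be concentrated on a $\tilde c$-cyclically monotone — in particular $\tilde c$-monotone, i.e. $2$-point monotone — Borel set $\Gamma_p\subset X\times Y$. Unpacking the $2$-point inequality on $\Gamma_p$ and translating back to original coordinates gives exactly: for all $x,y\in\Gamma_p$ (identified with subsets of $(\R^d)^N$), $c(x)+c(y)\le c(X(x,y,p))+c(Y(x,y,p))$, which is $c$-monotonicity with respect to $(p,p^c)$. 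Finally, since $I$ is finite there are only finitely many partitions $(p,p^c)$; intersecting the countably (in fact finitely) many full-measure sets $\Gamma_p$ over all such partitions yields a single Borel set $\Gamma := \bigcap_p \Gamma_p$ on which $\gamma$ is concentrated and which is $c$-monotone with respect to every partition, hence $c$-cyclically monotone in the sense of Definition \ref{def:setcyclicmonotone}.

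\medskip

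The only genuinely delicate point — and the one I would handle with care rather than routine bookkeeping — is the measure-theoretic passage from ``concentrated on a set that is pairwise monotone'' to a clean Borel representative, and the verification that the two-marginal monotonicity theorem applies verbatim once variables are regrouped; here one must be slightly careful that $\mu$ and $\nu$ need not be absolutely continuous (they live on product spaces), but the two-marginal characterization of optimal supports via $c$-cyclical monotonicity holds for arbitrary Polish marginals under continuity of the cost and finiteness of the optimal value, so no absolute-continuity hypothesis is actually needed at this stage — the hypotheses in the statement are used only to guarantee that $\Pi$ is nonempty and the problem well-posed. Everything else is a direct unwinding of definitions, and I would present it at that level of detail, referring to \cite{Passthesis, SmiKno} for the underlying two-marginal fact.
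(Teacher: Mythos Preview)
Your proposal is correct and follows exactly the approach the paper indicates: the paper does not give its own proof but refers to Lemma 3.1.2 of \cite{Passthesis}, noting that it is based on the two-marginal result \cite{SmiKno}, which is precisely the reduction-by-partition argument you outline. Your observation that the absolute-continuity hypothesis on the $\mu_i$ is not actually needed for this step is also accurate.
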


\subsection{Symmetric Case} \label{sec:sym}

$\quad$ We are going to show in this section that we can reduce our study of the Monge-Kantorovich \eqref{pb:MKN} and the Monge problems \eqref{pb:MN}, respectively, to a suitable class of transport maps (called cyclic maps) and transport plans (symmetric plans). Having in mind the DFT-OT problem, it is natural to suppose that the cost functions does not change if there are some symmetries among the configuration system of particles or, equivalently, we can suppose the couplings $\gamma \in \Pi_N(\mu)$ of those particles are cyclically symmetric. 

\begin{deff}[Cyclically symmetric probability measures]
A probability measure $\gamma \in \P(\R^{dN})$ is cyclically symmetric if 
\[
\int_{\R^{dN}}f(x_1,\dots,x_N)d\gamma = \int_{\R^{dN}} f(\overline{\sigma}(x_1,\dots,x_N))d\gamma, \text{ } \forall f \in  \C(\R^{Nd})
\]
where $\overline{\sigma}$ is the cyclical permutation
\[
\overline{\sigma}(x_1,x_2,\dots,x_N) = (x_2,x_3,\dots,x_N,x_1). 
\] 
We will denote by $\Pi^{sym}_N(\mu_1)$, the space of all symmetric probability measures on $\R^{dN}$ having the same first marginal $\mu_1$.
\end{deff}

Notice that if $(\mu_1,\dots,\mu_N)$ are the marginals of $\gamma \in \Pi^{sym}(\R^{dN},\mu_1)$, then all marginals $\mu_1,\dots,\mu_N$ are necessarily the same: $\forall \ f \in \C(\R^d)$,

\[
\begin{array}{ll}
\ds\forall i \in I, \quad \int_{\R^{d}}f(x_i)d\mu_i(x_i)
&=\ds \int_{\R^{dN}}\tilde{f}(x_1,\dots,x_N)d\gamma(x_1,\dots,x_N) \medskip \\ 
&=\ds \int_{\R^{dN}}\tilde{f}(\overline{\sigma}^{1-i}(x_1,\dots,x_N))d\gamma \medskip \\
 &=\ds\int_{\R^{d}}f(x_1)d\mu_i(x_1), $\medskip$ \quad 
 \\  \end{array} \]

where $\tilde{f}(x_1,\dots,x_N) = f(x_i)$. Briefly, 
\[
(e_i)_{\sharp}\gamma = \mu_1, \space \forall i \in I, 
\]
where $e_i$ is the canonical projection.

As previously, we can define the multi marginal symmetric Monge-Kantorovich Problem as follows:
\be\label{pb:MKsym}
(\mathcal{MK}_{sym})  \quad \inf_{\gamma \in \Pi^{sym}_N(\mu)} \int_{\R^{dN}} c(x_1,\dots,x_N)d\gamma(x_1,\dots,x_N),
\ee
and the associated multi marginal cyclically Monge Problem
\be \label{pb:MKcyc}
(\mathcal{M}_{cycl}) \quad \inf_{{\substack{T_{\sharp}\mu = \mu ,\\ T^{(N)}=I}}} \ \int_{\R^{d}}c(x,T^{(1)}(x),T^{(2)}(x), \dots,T^{(N-1)}(x)), 
\ee
Notice that, $(\mathcal{M}_{cycl})$ is related to $(\mathcal{M})$ by choosing  $T_{\sharp}\mu = \mu$, $T_{i+1}(x) = T^{(i)}(x)$ for every $i \in \lbrace 1,\dots,N-1\rbrace, T^{(N)} = Id$.

\begin{prop} Suppose $\mu \in \P(\R^d)$ is an absolutely continuous probability measure with respect to the Lebesgue measure and $c:(\R^d)^N\to\R$ is a continuous symmetric cost. Then,
\[ \inf(\MK) = \inf(\MK_{sym}). \]
\end{prop}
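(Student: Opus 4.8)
The plan is to prove the two inequalities $\inf(\MK) \leq \inf(\MK_{sym})$ and $\inf(\MK_{sym}) \leq \inf(\MK)$ separately. The first is immediate: every symmetric transport plan $\gamma \in \Pi^{sym}_N(\mu)$ is in particular an admissible plan in $\Pi_N(\mu)$ (since, as noted just above in the excerpt, cyclical symmetry forces all marginals to coincide with $\mu$), so the infimum over the smaller class $\Pi^{sym}_N(\mu)$ is at least the infimum over $\Pi_N(\mu)$. Hence $\inf(\MK) \leq \inf(\MK_{sym})$.

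For the reverse inequality, the key idea is \emph{symmetrization}. Given any $\gamma \in \Pi_N(\mu)$, I would define the symmetrized plan
\[
\overline{\gamma} = \frac{1}{N}\sum_{k=0}^{N-1} (\overline{\sigma}^k)_{\sharp}\gamma,
\]
where $\overline{\sigma}$ is the cyclical permutation. First I would check that $\overline{\gamma} \in \Pi^{sym}_N(\mu)$: it is cyclically symmetric because pushing forward by $\overline{\sigma}$ merely permutes the terms of the average (using $\overline{\sigma}^N = Id$), and each of its marginals equals $\mu$ since $(e_i)_\sharp (\overline{\sigma}^k)_\sharp \gamma = (e_{i+k \bmod N})_\sharp \gamma = \mu$ because all marginals of $\gamma$ are already $\mu$. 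Then, using that $c$ is symmetric — in particular invariant under the cyclical permutation, $c(\overline{\sigma}(x)) = c(x)$ — I compute
\[
\int_{\R^{dN}} c \, d\overline{\gamma} = \frac{1}{N}\sum_{k=0}^{N-1}\int_{\R^{dN}} c(x)\, d(\overline{\sigma}^k)_\sharp\gamma = \frac{1}{N}\sum_{k=0}^{N-1}\int_{\R^{dN}} c(\overline{\sigma}^k(x))\, d\gamma = \int_{\R^{dN}} c\, d\gamma.
\]
So $\overline{\gamma}$ is an admissible competitor in $(\MK_{sym})$ with the same cost as $\gamma$, giving $\inf(\MK_{sym}) \leq \int c\, d\gamma$; taking the infimum over $\gamma \in \Pi_N(\mu)$ yields $\inf(\MK_{sym}) \leq \inf(\MK)$. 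Combining the two inequalities gives the claim.

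The argument is essentially routine, so there is no serious obstacle; the only points requiring a little care are the measurability/pushforward bookkeeping (that $(\overline{\sigma}^k)_\sharp\gamma$ is a well-defined probability measure and that the change-of-variables formula $\int c\, d(\overline{\sigma}^k)_\sharp\gamma = \int c\circ\overline{\sigma}^k\, d\gamma$ applies, which holds since $\overline{\sigma}$ is a homeomorphism of $\R^{dN}$ and $c$ is continuous) and the verification that the symmetry of $c$ in the sense used here — full permutation symmetry — indeed entails invariance under the cyclical subgroup generated by $\overline{\sigma}$, which it does since $\overline{\sigma} \in \S_N$. I would also remark that the absolute continuity of $\mu$ plays no role in this particular statement; it is listed only for consistency with the surrounding results, where it matters for passing between $(\MK_{sym})$ and $(\mathcal{M}_{cycl})$.
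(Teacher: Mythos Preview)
Your proof is correct and follows essentially the same symmetrization approach as the paper. The only minor difference is that you average over the cyclic group generated by $\overline{\sigma}$ (which is exactly what is needed given the paper's definition of $\Pi^{sym}_N$ as \emph{cyclically} symmetric plans), whereas the paper averages over the full symmetric group $\S_N$, writing $\overline{\gamma} = \tfrac{1}{N!}\sum_{\sigma\in \S_N}\sigma_{\sharp}\gamma$; both yield a cyclically symmetric plan with the same cost.
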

\begin{proof} The fact that the infimum of (\ref{pb:MKN}) is less than or equal to (\ref{pb:MKsym}) is obvious. Now, we need to show that for every transport plan $\gamma \in \Pi_N(\mu)$ we can associate a symmetric plan in $\Pi^{sym}_N(\mu)$. Indeed, given $\gamma \in \Pi_N(\mu)$, we define
$$ \overline{\gamma} = \dfrac{1}{N!}\sum_{\sigma\in \S_N}\sigma_{\sharp}\gamma, $$
and finally we notice that $\overline{\gamma}$ has the same cost as $\gamma$.
\end{proof}

We remark that prove  $\inf(\mathcal{M}_{cycl}) \geq \inf(\MN)$  is not obvious . However, we can avoid this problem in order to prove the equivalence between the Multi-marginal cyclic problem and Multi-marginal OT problem, by noticing that $\inf(\mathcal{M}_{sym} \leq \inf(\M_{cycl})$. The last part was proved by M. Colombo and S. Di Marino \cite{CoDMa}. We give the precise statement of this theorem.

\begin{teo}[M. Colombo and S. Di Marino, \cite{CoDMa}]\label{CoDMa}
Let $c:(\R^d)^N\to \R$ be a continuous function and bounded from below. If $\mu$ has no atoms, then 
\[ (\mathcal{MK}_{sym}) = (\mathcal{M}_{cycl}).  \]
\end{teo}

\medskip

Finally, we have the equality between the Multi-marginal cyclic problem and Multi-marginal OT problem.

\begin{cor}\label{cor:CoDMa}
If $\mu$ has no atoms and $c:(\R^d)^N\to\R$ is a continuous function and bounded from below, then  
$$ \inf(\mathcal{MK}_{sym}) = \inf(\mathcal{MK}) = \inf(\MN) = \inf(\mathcal{M}_{cycl}). $$
\end{cor}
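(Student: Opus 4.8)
The plan is to assemble the four-way equality by chaining together the inequalities proved or stated earlier and then closing a short loop. We already have the trivial inequality $\inf(\mathcal{MK}) \le \inf(\mathcal{M}_N)$ from the remark that every Monge map $T=(Id,T_2,\dots,T_N)$ induces an admissible plan $\gamma_T=(Id\times T_2\times\cdots\times T_N)_\sharp\mu$ with the same cost. In the other direction, Corollary \ref{cor:MNMK} (which itself is an application of Pratelli's Theorem \ref{thm:PratelliMongeKantorovich} on the Polish spaces $X=\R^d$, $Y=(\R^d)^{N-1}$ with the non-prescribed second marginal $\nu=(e_2,\dots,e_N)_\sharp\gamma$ for an optimal $\gamma$) gives $\inf(\mathcal{M}_N)\le\inf(\mathcal{MK})$; note this is exactly where we need $\mu$ absolutely continuous — more precisely non-atomic — so Pratelli applies. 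Hence $\inf(\mathcal{MK})=\inf(\mathcal{M}_N)$.

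Next, the symmetrization proposition shows $\inf(\mathcal{MK})=\inf(\mathcal{MK}_{sym})$: the inequality $\ge$ is immediate since symmetric plans form a subclass, and for $\le$ one replaces any $\gamma$ by its symmetrization $\overline\gamma=\frac{1}{N!}\sum_{\sigma\in\mathfrak S_N}\sigma_\sharp\gamma$, which is symmetric, has first marginal $\mu$, and — because $c$ is symmetric — has the same cost. Finally, Theorem \ref{CoDMa} of Colombo and Di Marino supplies $\inf(\mathcal{MK}_{sym})=\inf(\mathcal{M}_{cycl})$ under exactly our hypotheses ($c$ continuous and bounded below, $\mu$ non-atomic). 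Putting these three identities side by side,
\[
\inf(\mathcal{M}_{cycl}) = \inf(\mathcal{MK}_{sym}) = \inf(\mathcal{MK}) = \inf(\mathcal{M}_N),
\]
which is the claimed corollary.

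The only subtle point to spell out — and the step I expect to be the real content rather than bookkeeping — is the passage $\inf(\mathcal{M}_N)\le\inf(\mathcal{MK})$, i.e. the correct invocation of Pratelli. One must be careful that the ambient Polish space on the target side is $(\R^d)^{N-1}$ and that we do \emph{not} prescribe its full law but only its $N-1$ one-dimensional marginals; the trick is that once an optimal $\gamma\in\Pi_N(\mu)$ is fixed, we \emph{define} $\nu:=(e_2,\dots,e_N)_\sharp\gamma$, apply Pratelli to the cost $\tilde c(x,y):=c(x,y_1,\dots,y_{N-1})$ on $(\R^d,\mu)\times((\R^d)^{N-1},\nu)$ to get a Borel map $S:\R^d\to(\R^d)^{N-1}$ with $S_\sharp\mu=\nu$ realizing the same cost, and then read off the components $S=(T_2,\dots,T_N)$. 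Each $T_i$ automatically pushes $\mu$ forward to $\mu_i=\mu$ because $e_{i}\circ S = e_{i-1}^{(\R^d)^{N-1}}\circ S$ has law the $i$-th marginal of $\nu$, which is $\mu$ by symmetry of $\gamma$'s marginals. Once this is granted the rest is a matter of recording which earlier result furnishes which inequality; no new estimate is needed. Note also that continuity of $c$ and boundedness below are used only to guarantee existence of optimal plans (standard, via weak-$*$ compactness of $\Pi_N(\mu)$) so that the argument can be run on an actual minimizer rather than along a minimizing sequence.
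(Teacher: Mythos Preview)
Your proof is correct and follows essentially the same route as the paper: the corollary is assembled from the symmetrization proposition ($\inf(\mathcal{MK})=\inf(\mathcal{MK}_{sym})$), Corollary~\ref{cor:MNMK} via Pratelli ($\inf(\mathcal{MK})=\inf(\mathcal{M}_N)$), and Theorem~\ref{CoDMa} ($\inf(\mathcal{MK}_{sym})=\inf(\mathcal{M}_{cycl})$), exactly as the surrounding text indicates. Your explicit unpacking of the Pratelli step---defining $\nu=(e_2,\dots,e_N)_\sharp\gamma$ for an optimal $\gamma$ and reading off the components of the resulting map---is precisely the argument the paper sketches just before Corollary~\ref{cor:MNMK}; one small inaccuracy is your closing remark that continuity and boundedness below are used \emph{only} for existence of minimizers: continuity is also a hypothesis of Pratelli's theorem, and both hypotheses are required by Theorem~\ref{CoDMa} itself.
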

\medskip

The last result of this section is due Pass \cite{Pass4}, which states that optimal transport plans for symmetric costs can be supported on sets of Hausdorff dimension equal or bigger than $2N-2$. 

\begin{teo}[Pass, \cite{Pass4}]\label{th:sym:Pass} Let $c:(\R^d)^N\to \R$ be a symmetric cost and $\lbrace \mu_i\rbrace^N_{i=1}$ be radially symmetric and absolutely continuous with respect to Lebesgue measure in $\R^d$. Suppose that for every radii $(r_1, ...r_N)$ the minimizers 
\[
(x_1,\dots,x_N) \in \operatorname{argmin}_{\vert y_i \vert = r_i} c(y_1, y_2, \dots y_N) \quad \textit{are not all co-linear.}
\]
Then, there exists solutions $\gamma$ for the Monge-Kantorovich symmetric problem \eqref{pb:MKsym} whose support is at least $(2N-2)$-dimensional.
\end{teo}

% !TEX root = ../book.tex

\section{Multi-marginal OT with Coulomb Cost}\label{sec:Coulomb}

$\quad$ This section is devoted to the summary of the results present in literature on the multi marginal optimal transport with Coulomb cost. We already highlighted the fact that the interesting case is when all the marginals are equal, since this is the physical case. We point out that, as always, the focus will be on characterizing the optimal plans as well as looking for cyclical transport maps: in this direction an important conjecture has been made by Seidl in its seminal paper \cite{Sei}, where he gave an explicit construction of a map for measures with radial symmetry in $\R^d$. In the last years the conjecture was proven to be right: first in the 1D case when $\mu$ is the uniform measure on an interval, and then generalized to any diffuse measure on the real line. However quite recently various authors disproved the conjecture in the case $\R^d$ for $d \geq 2$ and for $N=3$ marginals, yet the Seidl map is still optimal for some class of measures.

We begin by analyzing the general problem, and then we will proceed to look at the radial case.

\subsection{General theory: duality, equivalent formulations and many particles limit}

We begin by stating the duality theorem in this case proved by De Pascale \cite{DeP}, by means of $\Gamma$-convergence of finite dimensional problems for which the duality is classical.

\begin{teo}\label{lem:DualDeP} Let $\mu \in \mathcal{P}(\R^d)$ be a measure that is not concentrated on a set of cardinality smaller or equal than $N-1$. Then the duality \eqref{eq:DualDeP} holds and the dual problem has a bounded maximizer.
\end{teo}

The boundedness of the maximizer $u$ can let us prove also some regularity properties. The first one is the fact that any optimal plan is supported away from the diagonals, while the second one proves second order regularity of $u$.

\begin{lemma}[Corollary 3.13 in \cite{DeP}]\label{lem:diagonal} Let $\mu$ as in Theorem \ref{lem:DualDeP}; then there exists $\alpha >0$ such that for every optimal plan $\gamma$ we have ${\rm supp}(\gamma) \subseteq \{ (x_1, \ldots, x_N) \; : \; |x_i - x_j| \geq \alpha,  \; \forall \, i\neq j \}$.
\end{lemma}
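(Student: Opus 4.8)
The plan is to exploit the boundedness of the dual maximizer $u$ from Theorem \ref{lem:DualDeP} together with the duality relation \eqref{eq:DualDeP} and the pointwise complementary-slackness identity (point (iii) of Theorem \ref{th:DualKellerer}, which also holds here as stated after \eqref{eq:ColConstaintDual}). Fix an optimal $\gamma$ and a bounded maximizer $u$ with $\|u\|_\infty \le M$. By complementary slackness, for $\gamma$-a.e. $(x_1,\dots,x_N)$ we have the equality
\[
\sum_{i=1}^N\sum_{j=i+1}^N \frac{1}{|x_i-x_j|} = \sum_{i=1}^N u(x_i) \le NM.
\]
In particular every pair term is bounded: $\frac{1}{|x_i-x_j|} \le NM$ for each $i\neq j$, since all the summands are nonnegative. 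Hence $|x_i - x_j| \ge \alpha := \frac{1}{NM}$ on a set of full $\gamma$-measure, and after discarding a $\gamma$-null set we may take the support to satisfy the stated inequality (the set $\{|x_i-x_j|\ge\alpha\ \forall i\neq j\}$ is closed, so it contains $\mathrm{supp}(\gamma)$ once it has full measure).

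The main technical point to be careful about is the passage from ``full $\gamma$-measure'' to ``support of $\gamma$''. The inequality $\sum_{i<j}|x_i-x_j|^{-1} = \sum_i u(x_i)$ holds only $\gamma$-almost everywhere, and a priori the exceptional null set could be dense. This is resolved precisely because the constraint set $C_\alpha := \{(x_1,\dots,x_N) : |x_i-x_j|\ge \alpha \ \forall i\neq j\}$ is closed: if $\gamma(C_\alpha)=1$ then $\mathrm{supp}(\gamma)\subseteq C_\alpha$, since the complement is an open set of zero $\gamma$-measure and therefore disjoint from the support. So the only thing that genuinely needs the hypotheses of Theorem \ref{lem:DualDeP} is the existence of the bounded maximizer $u$; everything after that is the elementary observation that a bounded sum of nonnegative Coulomb terms forces a uniform separation.

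One subtlety worth flagging: to invoke the equality $\sum_i u(x_i) = c(x_1,\dots,x_N)$ $\gamma$-a.e.\ one needs $u \in L^1(\mu)$ and the duality to be attained on both sides, which is exactly the content of \eqref{eq:DualDeP} and the ``conclusions (i) and (iii) of Theorem \ref{th:DualKellerer}'' noted immediately below \eqref{eq:ColConstaintDual}. I would also remark that $\alpha$ depends only on $N$ and on $\sup u$, hence only on $N$ and $\mu$, which matches the statement; no uniformity in $\gamma$ is lost because $u$ can be chosen independently of the optimal plan. This is Corollary 3.13 of \cite{DeP}, so I expect the author's proof to follow essentially this line, perhaps phrased directly in terms of the $c$-cyclical monotonicity / $c$-conjugacy of $u$ rather than via the abstract duality statement.
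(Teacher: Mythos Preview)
Your proof is correct and follows the standard argument: boundedness of the dual maximizer $u$ together with the complementary-slackness identity forces each Coulomb term to be bounded by $NM$, and the closedness of $C_\alpha$ upgrades ``full $\gamma$-measure'' to ``contains the support''. Note, however, that the paper does not give its own proof of this lemma: it is simply stated and attributed to Corollary~3.13 of \cite{DeP}, so there is nothing in the present paper to compare your argument against. Your expectation that the original proof in \cite{DeP} proceeds along these lines is reasonable, and your care about the passage from a.e.\ to support, as well as the observation that $\alpha$ depends only on $N$ and $\mu$ (via $\|u\|_\infty$) and not on the particular optimal $\gamma$, are both appropriate.
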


\begin{lemma}\label{lem:regularitypot} Let $\mu$ as in Theorem \ref{lem:DualDeP} and $u$ be a bounded maximizer in the dual problem in \eqref{eq:DualDeP}; then $u$ has a $\mu$-representative that is Lipschitz and semi concave.
\end{lemma}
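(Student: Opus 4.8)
The plan is to show that the $c$-conjugate of $u$ is the desired $\mu$-representative: although the Coulomb cost $c(x_1,\dots,x_N)=\sum_{i<j}|x_i-x_j|^{-1}$ is singular, it is smooth with \emph{quantitatively controlled} first and second derivatives off the diagonals, and the infimum defining the conjugate is only ever realized at configurations in which all $N$ points are mutually separated by a fixed amount. \textbf{Step 1 (reduction to the $c$-conjugate).} First I would set $M:=\|u\|_{L^\infty(\mu)}$ and define, for $x\in\R^d$,
\[
u^c(x):=\inf\Big\{\,c(x,x_2,\dots,x_N)-\sum_{j=2}^N u(x_j)\ :\ x_2,\dots,x_N\in\R^d\,\Big\}.
\]
By construction $u^c(x_1)+\sum_{j\ge2}u(x_j)\le c$ for \emph{every} configuration, $u^c\ge u$ $\mu$-a.e.\ (since $u$ is admissible), and in fact $u^c=u$ $\mu$-a.e.: on an optimal plan $\gamma$ one has $\sum_i u(x_i)=c(x_1,\dots,x_N)$ (the analogue of Theorem~\ref{th:DualKellerer}(iii), which \cite{DeP} also establishes), hence $u(x_1)=u^c(x_1)$ for $\gamma$- and therefore $\mu$-a.e.\ $x_1$. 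It thus suffices to prove that $u^c$ is Lipschitz and semiconcave on $\R^d$.

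\textbf{Step 2 (boundedness and uniform separation).} Testing the infimum at $x_j=x+(j-1)e$ for a fixed unit vector $e$ gives $u^c(x)\le \tfrac{N(N-1)}{2}+(N-1)M=:C_1$ for all $x$, while $u^c(x)\ge u(x)\ge -M$. Since every term of the Coulomb cost is nonnegative, any configuration realizing $u^c(x)$ up to an additive error $\le1$ satisfies $c(x,x_2,\dots,x_N)\le C_1+1+(N-1)M=:C_2$, hence $|x_i-x_j|\ge \alpha:=1/C_2$ for all $i\neq j$ (with $x_1:=x$), and $\alpha$ is independent of $x$ — the same mechanism as in Lemma~\ref{lem:diagonal}. \textbf{Step 3 (Lipschitz).} Let $|x-x'|<\alpha/2$ and fix $(x_2,\dots,x_N)$ realizing $u^c(x)$ up to $\varepsilon$; using it as a competitor for $u^c(x')$ (the interaction terms not involving the first slot cancel),
\[
u^c(x')-u^c(x)\le \varepsilon+\sum_{j=2}^N\Big(\frac{1}{|x'-x_j|}-\frac{1}{|x-x_j|}\Big)\le \varepsilon+\sum_{j=2}^N\frac{|x-x'|}{|x-x_j|\,|x'-x_j|}\le \varepsilon+\frac{2(N-1)}{\alpha^2}\,|x-x'|,
\]
using $|x-x_j|\ge\alpha$ and $|x'-x_j|\ge\alpha/2$. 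Letting $\varepsilon\to0$ and exchanging $x,x'$ yields a Lipschitz bound with a constant depending only on $N$ and $\alpha$, uniform on $\R^d$.

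\textbf{Step 4 (semiconcavity).} Fix $x_0$ and let $G$ be the (nonempty) set of configurations with $|x_0-x_j|\ge\alpha/2$ for all $j$. By Step 2, for $x\in B(x_0,\alpha/4)$ the infimum defining $u^c(x)$ is unchanged when restricted to $G$, since any $\varepsilon$-minimizer over all configurations already lies in $G$ once $\varepsilon\le1$. For $(x_2,\dots,x_N)\in G$ and $x\in B(x_0,\alpha/4)$ one has $|x-x_j|\ge\alpha/4$, so each $x\mapsto 1/|x-x_j|$ is $\kappa$-semiconcave on $B(x_0,\alpha/4)$ with $\kappa:=128\,\alpha^{-3}$ (the Hessian of $|\cdot|^{-1}$ has largest eigenvalue $2|\cdot|^{-3}$). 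Hence on $B(x_0,\alpha/4)$ the function $u^c$ is an infimum over $G$ of functions that are uniformly $(N-1)\kappa$-semiconcave in $x$ (their non-$x$ part being constant), and an infimum of uniformly semiconcave functions on a convex set is semiconcave; thus $u^c$ is $(N-1)\kappa$-semiconcave near every $x_0$ with a uniform constant, hence semiconcave on $\R^d$.

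\textbf{Expected main obstacle.} The one genuinely delicate point is Step 1: making rigorous that an \emph{arbitrary} bounded maximizer agrees $\mu$-a.e.\ with a $c$-conjugate function, given that the constraint \eqref{eq:ColConstaintDual} is imposed only $\otimes^N\mu$-a.e.\ (so one must reconcile essential versus genuine infima over the remaining variables, and possibly symmetrize). This is precisely where the strong duality and a.e.\ complementary-slackness of \cite{DeP} enter; once it is granted, the regularity follows from the elementary observation that the near-minimizers of the infimum defining $u^c$ are confined to the region where all $N$ points are $\alpha$-separated, on which the singular cost $c$ is as regular as one could wish.
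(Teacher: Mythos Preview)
Your approach is essentially the paper's: replace $u$ by its $c$-conjugate, show the two agree $\mu$-a.e., and read off Lipschitz and semiconcavity from the fact that near-optimal configurations in the defining infimum stay uniformly away from the diagonals, so that one is taking an infimum of uniformly $C^\infty$ functions.

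The paper resolves your acknowledged obstacle in Step~1 somewhat differently than you suggest: rather than invoking complementary slackness from \cite{DeP}, it (i) defines the conjugate via the \emph{essential} infimum $w(x_1)=\operatorname*{ess\,inf}_{\mu^{\otimes N-1}}\bigl\{c-\sum_{j\ge2}u(x_j)\bigr\}$, which is well-defined for an $L^\infty(\mu)$ function and sidesteps the genuine-versus-essential issue entirely, and (ii) argues directly that if $w>u$ on a set of positive $\mu$-measure then the asymmetric $N$-tuple $(w,u,\dots,u)$ would strictly improve the dual value, contradicting maximality of $u$. Correspondingly, the upper bound on $w$ (your Step~2) is obtained by testing at $N$ mutually separated points $p_1,\dots,p_N$ \emph{in the support of} $\mu$ --- whose existence is precisely the hypothesis that $\mu$ is not concentrated on $\le N-1$ points --- rather than at $x+(j-1)e$, which need not lie in $\operatorname{supp}\mu$ and hence is not an admissible test configuration for the ess-inf. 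With these two adjustments your Steps~3--4 go through and match the paper's conclusion.
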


\begin{proof} As in the classical case we first prove a structural property of the maximizer $u$, namely that the constraint \eqref{eq:ColConstaintDual} is saturated, that is $(u,\ldots, u)$ is a $c$-conjugate $N$-tuple.
%Let us take $\gamma$ a minimizer of the primal problem. Then thanks to the optimality condition
%\be\label{eq:opticondi}
%\sum^N_{i=1}u(x_i) = \sum_{1 \leq i <j \leq N } \dfrac{1}{\vert x_j-x_i\vert}, \qquad \gamma-\text{almost everywhere;}
%\ee
%Let us call $A_{\gamma} \subseteq \R^{dN}$ the set where this equality holds. Since $\gamma(A_{\gamma})=1$ we have also that $\mu (\pi_1(A_{\gamma}))=1$ meaning that for $\mu$-almost every $x_1\in \R^d$ there exist $x_2, \ldots, x_N \in \R^d$ such that $(x_1, \ldots, x_N) \in A_{\gamma}$.
%we know that there exists $\alpha$ such that ${\rm supp} (\gamma) \subset D_{\alpha} = \{ (x_1, \ldots, x_N) \; : \; |x_i-x_j| \geq \alpha \; \; \forall i \neq j \}$ (see Corollary 3.13 in \cite{DeP}). 
Let  us consider  
$$ w(x_1) = \operatorname*{ess-inf}_{ (x_2, \ldots, x_N) \in \R^{d(N-1)} } \left\{ \sum_{1 \leq i<j \leq N } \frac 1{ |x_i - x_j | }  - \sum_{ i=2}^N u (x_i) \right\}, $$
where the ess-inf is made with respect to the measure $\mu^{\otimes N-1}$. It is obvious that $w(x_1) \geq u(x_1)$ for $\mu$-a.e $x_1$, thanks to \eqref{eq:ColConstaintDual}.  Suppose that $w >u$ in a set of positive measure; but then $(w,u, \ldots, u)$ would be a better competitor in the (not symmetric) dual problem contradicting the fact that $u$ is a maximizer (we use the fact that the symmetric dual problem and the not symmetric one have the same values). Now we have that $w$ is defined everywhere and so we can talk about its regularity.

Let $d>0$ be a number such that there exists points $p_1, \ldots p_N$ in the support of $\mu$ such that $|p_i- p_j| \geq d$; this number exists as long as $\sharp \, {\rm supp} (\mu ) \geq N$. Let us fix $\ep >0$ such that $\frac 1 {2\ep} >  \frac {2N(N-1)}d  +2N \| u \|_{\infty}$ and $\ep \leq d/8$; in this way it is true that if we define
$$ w_{x_0} (x_1) = \operatorname*{ess-inf}_{ |x_i-x_0| \geq \ep, \, i=2, \ldots, N } \left\{ \sum_{1 \leq i<j \leq N } \frac 1{ |x_i - x_j | }  - \sum_{ i=2}^N u (x_i) \right\}, $$
we have $w_{x_0}=w$ on $B(x_0, \ep)$. In fact in the definition of $w$ we can choose $x_i$ among the $p_i$ (or very close to them, as they belong to the support) such that $|x_i-x_j|> d/2 - 2\ep \geq d/4$ for every $i \neq j$, and so we have that $w \leq  \frac {2N(N-1)}d  +N \| u \|_{\infty} $ but then it is clear that for any $(x_2, \ldots, x_N)$ such that $|x_i-x_0| \leq \ep $ we will have 
$$\sum_{1 \leq i<j \leq N } \frac 1{ |x_i - x_j | }  - \sum_{ i=2}^N u (x_i) \geq \frac 1{2\ep} > w(x_1) \qquad \forall x_1 \in B(x_0, \ep);$$
this proves that in fact $w_{x_0}=w$ on the set $B(x_0, \ep)$ and so in particular also in $B(x_0, \ep/2)$. But in this set $w_{x_0}$ is Lipschitz and semi concave since it is an infimum of uniformly $C^{\infty}$ functions on $B(x_0, \ep/2)$. Moreover the bounds on the first and second derivatives don't depend on $x_0$ but only on $\ep$, that is fixed a priori, and so by a covering argument we obtain the thesis.
\end{proof}

Another interesting reformulation of the Coulomb-like problem, or more generally when we have only interaction between two particles, can be found in \cite{FMPCK} where, seeking a dimensional reduction of the problem, the authors use the fact that if $\gamma \in \Pi_N (\mu)$ is a symmetric plan then
$$ \int_{\R^{Nd}} \sum_{ 1 \leq i < j \leq N} \frac 1{|x_i - x_j |} \, d \gamma = {N \choose 2} \int_{\R^{2d}} \frac 1{|x-y|} \, d (e_1,e_2)_{\sharp} \gamma, $$
that is, since the electron are indistinguishable, it is sufficient to look at the potential energy of a couple of electrons and then multiply it by the number of couples of electron. It is clear that $\gamma_2 = (e_1 , e_2)_{\sharp} \gamma$ is a $2$-plan whose marginal are still $\mu$; we will say that $\eta \in \Pi_2(\mu)$ is $N$-representable whenever it exists $\gamma \in \Pi_N(\mu)$ such that $\eta=(e_1,e_2)_{\sharp} \gamma$. The equivalent formulation they give is

$$ \min_{\gamma \in \Pi_N(\mu)} \int_{\R^{Nd}} \sum_{ 1 \leq i < j \leq N} \frac 1{|x_i - x_j |} \, d \gamma = \min_{\substack{\eta \in \Pi_2(\mu)  \\ \eta \text{ is $N$-representable}}} {N \choose 2} \int_{\R^{2d}} \frac 1{|x-y|} \, d \eta. $$

Unfortunately the conditions for being $N$-representable are not explicit and they are very difficult to find; this is correlated to the $N$-representability for matrices, but here, since we are in the semiclassical limit, we deal with densities instead. In \cite{FMPCK} the authors propose, as a method for reducing the dimension of the problem, to substitute the condition of being $N$-representable with that of being $k$-representable with $k\leq N$; the resulting problem will give a lower bound to the SCE functional . %Furthermore they provide also some kind of limit of the condition of being $N$-representable when $N \to \infty$: in the case in which the measure $\mu$ is concentrated in only two points, a measure $\gamma \in \Pi_2(\mu)$ is $N$-representable if and only if $\gamma$ is a linear combination of measure of the mean field type $\rho \otimes \rho$.

We present also a general theorem regarding the many particles limit, that embodies the well-known fact that when $N \to \infty$ then the solution is the mean field one

\begin{teo}[\cite{CotFriPass}] Let $\mu \in \mathcal{P}(\R^d)$. Then we have that
$$ \lim_{N \to \infty} \frac 1{{N \choose 2}} \min_{\gamma \in \Pi_N (\mu)} \int_{\R^{Nd}} \sum_{1 \leq i<j \leq N} \frac 1{|x_i-x_j|} \, d \gamma = \int_{\R^{2d} } \frac 1 { |x-y|} \, d \mu \otimes \mu.$$
In terms of DFT we are saying that $E^{OT}(\mu) = {N \choose 2} E^{MF}(\mu) + o(N^2)$, where $E^{MF}$ is the normalized mean field energy $E^{MF}(\mu)= \int \frac 1{|x-y|} \, d \mu(x) \, d \mu(y)$.
\end{teo}

\begin{oss} The statement about the Coulomb cost in the physical case is quite classical. In fact, for a measure $\rho \in L^{4/3}(\R^3)$ the Lieb-Oxford  bound holds \cite{Lieb79, LiebOxford}:
$${ N \choose 2} E^{MF}(\rho) \geq E^{OT}(\rho) \geq N^2 E^{MF}(\rho) - C N^{4/3} \int_{\R^3} \rho^{4/3}(x) \, d x,$$ and so the conclusion is immediate. However, in \cite{CotFriPass} the proof is completely different and relies on the fact that a measure $\gamma \in \Pi_2(\mu)$ that is $N$-representable for every $N$ must be in the convex envelope of measures of the type $\nu \otimes \nu$, and then on a direct computation using the Fourier transform. In particular, aside from the Coulomb cost in dimension $3$ they prove the theorem for more general costs with binary interaction of the form $\sum_{ 1 \leq i < j \leq N} l( x_i -x_j)$, where $l \in C_b(\R^d) \cap L^1(\R^d)$ satisfies $l(z)=l(-z)$ and also $\hat{l} \geq 0$.
\end{oss}

\subsection{The Monge problem: deterministic examples and counterexamples}

In this section we will illustrate the cases in which we know that there exists a deterministic solution; in those cases moreover there is also a result of uniqueness. We remark that these are the two extreme case, namely the case $N=2$ (and any $d$) and the case $d=1$ (and any $N$). We will just sketch the proofs in order to make clear the method used here, and why it is difficult to generalize it. We begin with the $2$-particles case, in every dimension $d$: this result was proved in \cite{CFK} by means of standard optimal transport techniques.  

\begin{teo}\label{th:cotar} Let $\mu \in \mathcal{P}(\R^d)$ be a probability measure that is absolutely continuous with respect to the Lebesgue measure. Then there exists a unique optimal plan $\gamma_O \in \Pi_2 (\mu)$ for the problem
$$ \min_{ \gamma \in \Pi_2(\mu) } \int_{\R^{2d} } \frac 1{|x_1-x_2|} \, d  \gamma. $$
Moreover this plan is induced by an optimal map $T$, that is, $\gamma= (Id,T)_{\sharp} \mu$, and $T(x)=x +\frac{ \nabla \phi } {| \nabla \phi|^{3/2} }$ $\mu$-almost everywhere, where $\phi$ is a Lipschitz maximizer for the dual problem.
\end{teo}

\begin{proof} Let us consider $\gamma$ a minimizer for the primal problem and $\phi$ a bounded and Lipschitz  maximizer of the dual problem (it exists thanks to Lemma \ref{lem:regularitypot}). Then we know that  
$$ F(x_1,x_2) = \phi(x_1) + \phi(x_2) - \frac{1}{|x_1-x_2|} \leq 0 \qquad \text{ for $\mu \otimes \mu$-almost every $(x_1,x_2)$,}$$
and we know also that $F=0$ $\gamma$-almost everywhere. But then $F$ has a maximum on the support of $\gamma$ and so $\nabla F=0$ in this set; in particular we have that $\nabla \phi (x_1) = - \frac{ x_1 - x_2}{ |x_1- x_2|^3}$ on the support of $\gamma$. Finding $x_2$ we have that $x_2=x_1 - \frac { \nabla \phi }{| \nabla \phi|^{ 3/2}} (x_1) = T(x_1)$ on the support of $\gamma$ and this implies $\gamma = (Id,T)_{\sharp}\mu$ as we wanted to show.
\end{proof}

The first positive $N$-marginal result for the Coulomb cost is instead shown in \cite{CoDePDMa} where, in dimension $d=1$, the authors can prove that for non-atomic measure an optimal plan is always ``induced'' by a cyclical optimal map $T$. 

\begin{teo}\label{teo:1DN} Let $\mu \in \mathcal{P}(\R)$ be a diffuse probability measure. Then there exists a unique optimal symmetric plan $\gamma_O \in \Pi^{sym}_2 (\mu)$ that solves
$$ \min_{ \gamma \in \Pi^{sym}_N(\mu) } \int_{\R^{N} } \sum_{1 \leq i < j \leq N}  \frac 1{|x_j-x_i|} \, d  \gamma. $$
Moreover this plan is induced by an optimal cyclical map $T$, that is, $\gamma_O=\frac 1{N!} \sum_{\sigma \in \S_N} \sigma_{\sharp} \gamma_T$, where $\gamma_T=(Id,T,T^{(2)} , \ldots, T^{(N-1)})_{\sharp} \mu$. An explicit optimal cyclical map is  
$$T(x) =\begin{cases}  F_{\mu}^{-1} (F_{\mu}(x) + 1/N) \qquad & \text{ if }F_{\mu}(x) \leq (N-1)/N \\ F_{\mu}^{-1} ( F_{\mu}(x) +1 - 1/N ) & \text{ otherwise.} \end{cases}$$
Here $F_{\mu}(x)=\mu ( -\infty , x]$ is the distribution function of $\mu$, and $F_{\mu}^{-1}$ is its lower semicontinuous left inverse. 
\end{teo}

\begin{proof} We begin by observing that if $\gamma$ is a symmetric optimal plan then a stronger statement than Proposition \ref{prop:optccyclicmonotone} holds, namely we have that for every $x,y \in {\rm supp }(\gamma)$:
$$ c(x)+ c(y) = \min \{  c(X(x,\sigma(y),p)) + c(Y(x,\sigma(y),p)) : \; \forall p \subset \{1, \ldots ,N\} , \forall \sigma \in \S_N \},$$
where $c$ is the Coulomb cost on the $N$-tuple of points $x=(x_1, \ldots, x_N)$ and $\sigma$ acts on the indices.
The key point here is that one can show (Proposition 2.4 \cite{CoDePDMa}) that this property holds if and only if a more geometric condition holds true for $x$ and $y$: they are well-ordered. This property amounts to the fact that, up to permute the coordinates of both points, we have
$$ x_1 \leq y_1 \leq x_2 \leq \cdots \leq x_N \leq y_N,$$
or the other way around. Once this property is proven the rest of proof is rather straightforward, since we proved that the support of $(e_1,e_2)_{\sharp}\gamma|_D$, where $D = \{ x_i \leq x_j \text{ and } y_i \leq y_j, \, \forall \, i< j\}$, {\color{ red} is  monotone in the usual sense}.
We remark that the key property does not easily generalize to $d>1$ since it (heavily) uses the ordered structure of $\R$.
 \end{proof}

Let us notice that in this case we don't have uniqueness of optimal plan neither of cyclical optimal maps, as pointed out in Remark 1.2 in \cite{CoDePDMa}.

For the general case we thus have to assume $d>1$ and $N>2$: from now on we will look at the case in which the measure $\mu$ has radial symmetry. We can easily see that in this case we can reduce our problem to a multi marginal problem in $\R$.

\begin{lemma}[Radial case]\label{lemma:radialcase} Let $\mu \in \mathcal{P}(\R^d)$ be a radially symmetric measure. In particular $\mu$ is determined by $\mu_r= | \cdot |_{\sharp} \mu$. Then for every optimal plan $\gamma \in \Pi_N(\mu)$, if we consider $\gamma_r=R_{\sharp} \gamma$, where $R: (x_1, \ldots , x_N ) \mapsto (|x_1|, \ldots , |x_N|)$, we have that $\gamma_r$ is an optimal plan for the $1D$ multi marginal problem
\begin{equation}\label{eq:red_rad}
 \min_{ \eta \in \Pi_N(\mu_r) } \int_{\R^N} c_1(r_1, \ldots, r_N ) \, d \eta.
 \end{equation}
where  $c_1$ is the reduced Coulomb cost 
$$c_1(r_1, \ldots, r_N ) = \min \left\{ \sum_{1\leq i<j \leq N}  \frac 1{| x_j - x_i |} \; : \; |x_i|=r_i \; \forall i=1, \ldots, N \right\}.$$
Moreover $ \int_{\R^{dN} } \sum_{ 1 \leq i < j \leq N} \frac 1 {|x_i - x_j|} \, d \gamma = \int_{\R^d} c_1( r_1, \ldots r_N ) \, d \gamma_r$.
\end{lemma}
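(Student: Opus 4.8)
The plan is to exploit two features of the Coulomb cost: its invariance under the simultaneous action of $O(d)$ on all coordinates, and the fact that the cost only depends on the pairwise distances $|x_i - x_j|$. Writing $\gamma_r = R_\sharp \gamma$ where $R(x_1,\dots,x_N) = (|x_1|,\dots,|x_N|)$, the first task is the bookkeeping: since $(|\cdot|)_\sharp \mu = \mu_r$ and $\gamma \in \Pi_N(\mu)$, we get $(e_i)_\sharp \gamma_r = (|\cdot|)_\sharp (e_i)_\sharp \gamma = (|\cdot|)_\sharp \mu = \mu_r$, so $\gamma_r \in \Pi_N(\mu_r)$ and it is at least admissible for \eqref{eq:red_rad}.

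\textbf{The key inequality.} For the energy identity and optimality I would argue by two inequalities. First, by the very definition of $c_1$ as a pointwise minimum over sphere-constrained configurations, for $\gamma$-a.e. $(x_1,\dots,x_N)$ we have $\sum_{i<j}\frac{1}{|x_i-x_j|} \ge c_1(|x_1|,\dots,|x_N|)$; integrating against $\gamma$ and using the definition of pushforward gives $\int \sum_{i<j}\frac{1}{|x_i-x_j|}\,d\gamma \ge \int c_1\,d\gamma_r \ge \min\eqref{eq:red_rad}$. For the reverse inequality, take any optimal plan $\eta \in \Pi_N(\mu_r)$ for the reduced problem and ``lift'' it back to $\R^{dN}$: for each $(r_1,\dots,r_N)$ choose (measurably, via a selection theorem) a minimizing configuration $(x_1(r),\dots,x_N(r))$ realizing $c_1$ with $|x_i(r)|=r_i$, then symmetrize over $O(d)$ by integrating against the Haar measure $\theta$ on the rotation group, i.e.\ set $\tilde\gamma = \int_{O(d)} (g\cdot(x_1(\cdot),\dots,x_N(\cdot)))_\sharp\eta \, d\theta(g)$. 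One checks that each marginal of $\tilde\gamma$ is the rotational symmetrization of $(|\cdot|^{-1})_{r_i}$-type measures, which is exactly $\mu$ since $\mu$ is radial with radial part $\mu_r$; hence $\tilde\gamma \in \Pi_N(\mu)$, and its Coulomb energy equals $\int c_1 \, d\eta = \min\eqref{eq:red_rad}$ because the Coulomb cost is itself rotation-invariant. This shows $\min_{\Pi_N(\mu)} \le \min\eqref{eq:red_rad}$, so all the inequalities above are equalities. In particular $\int \sum_{i<j}\frac{1}{|x_i-x_j|}\,d\gamma = \int c_1\,d\gamma_r$ for the original optimal $\gamma$, which is the claimed energy identity, and $\int c_1 \, d\gamma_r = \min\eqref{eq:red_rad}$, so $\gamma_r$ is optimal for the reduced problem.

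\textbf{The main obstacle} I expect is the measurable selection of a minimizing sphere-configuration $r \mapsto (x_1(r),\dots,x_N(r))$ used in the lifting step, together with the verification that $c_1$ is itself a well-defined (lower semicontinuous) function for which the minimum is attained; both follow from compactness of the sphere-constrained configuration space away from the diagonals (using Lemma \ref{lem:diagonal} to stay away from collisions where the cost blows up) and a standard Kuratowski--Ryll-Nardzewski type selection argument, but they need to be handled with some care. A secondary subtlety is that the pointwise inequality $\sum_{i<j}\frac{1}{|x_i-x_j|} \ge c_1(|x_1|,\dots,|x_N|)$ being an equality $\gamma$-a.e.\ — which one also gets for free once all the displayed inequalities collapse — is what forces $\gamma$ to be concentrated on configurations that are ``optimally arranged on their spheres,'' a fact that will be used later; I would record this as part of the conclusion. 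Everything else — the pushforward computations and the invariance of the Coulomb cost under the diagonal $O(d)$-action — is routine.
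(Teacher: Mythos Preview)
The paper states this lemma without proof, so there is no argument to compare against; your task reduces to whether your proposal is sound. It is: the admissibility of $\gamma_r$ in $\Pi_N(\mu_r)$ is the routine pushforward computation you give, the chain $\int c\,d\gamma \ge \int c_1\,d\gamma_r \ge \min\eqref{eq:red_rad}$ follows from the pointwise inequality $c \ge c_1\circ R$, and the lifting of an optimal radial plan $\eta$ via a measurable angular selection averaged over $O(d)$ is the standard way to close the loop. Once the chain collapses you get both the optimality of $\gamma_r$ and the energy identity, and as you note, the $\gamma$-a.e.\ equality $c = c_1\circ R$ drops out for free.

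One small correction: you appeal to Lemma~\ref{lem:diagonal} in the selection step, but that lemma is about the support of optimal plans for $\mu$ and is not what is needed here. The minimum defining $c_1(r_1,\dots,r_N)$ is attained simply because $\prod_i \{|x_i|=r_i\}$ is compact and the Coulomb cost is lower semicontinuous with values in $(0,+\infty]$; the argmin correspondence then has closed graph, and a Kuratowski--Ryll-Nardzewski selection gives the measurable section directly. No off-diagonal information is required for the lifting. (Where Lemma~\ref{lem:diagonal} \emph{would} enter is if you wanted to argue a priori that $c_1$ is finite and continuous on the support of $\gamma_r$, but that is not needed for the argument as you have structured it.)
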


Given the one dimensional character of this problem, one could expect that the solution is similar to the one depicted in Theorem \ref{teo:1DN}. In fact in \cite{SeiGorSav} the authors conjecture a similar structure:

\begin{conj}[Strong Seidl Conjencture]\label{conj.Seidl} Let $\mu \in \mathcal{P}(\R^d)$ be an absolutely continuous  measure with respect to the Lebesgue measure, with radial symmetry, and let $\mu_r=| \cdot |_{\sharp} \mu$.  Let $0=r_0 <r_1< \ldots < r_{N-1}< r_N= \infty$ such that the intervals $A_i=[r_i,r_{i+1})$ have all the same radial measure $\mu_r(A_i) = 1/N$. Then let $F(r)= \mu_r (0,r]$ be the cumulative radial function and let $S: [0, \infty) \to [0,\infty)$ be defined piecewise such that the interval $A_i$ is sent in the interval $A_{i+1}$ in an anti monotone way:
$$ S(r) =F^{-1} (  2i/N-F(r)) \qquad \text{ if }r_{i-1} \leq r < r_i \text{ and }i <N$$
$$ F(S(r)) = \begin{cases} F^{-1}(F(r) + 1/N -1) \quad & \text{ if $N$ is even}\\ F^{-1}(1 - F(r)) & \text{ if $N$ is odd, } \end{cases} \qquad \text{ if }r_{N-1} \leq r < r_N. $$
Then $S$ is an optimal cyclical map for the problem \eqref{eq:red_rad}.
\end{conj}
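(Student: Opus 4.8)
The plan is to mirror the argument that settled the honest one-dimensional case (Theorem~\ref{teo:1DN}), carried over to the reduced problem of Lemma~\ref{lemma:radialcase}. First I would reduce: by Corollary~\ref{cor:CoDMa} it is enough to work with symmetric plans, and by Lemma~\ref{lemma:radialcase} the radial problem is equivalent to the one-dimensional multi-marginal problem \eqref{eq:red_rad} on $[0,\infty)$ with marginal $\mu_r$ and cost $c_1$. So it suffices to prove that the symmetrization $\gamma_S=\frac{1}{N!}\sum_{\sigma\in\S_N}\sigma_{\sharp}\big(Id,S,S^{(2)},\dots,S^{(N-1)}\big)_{\sharp}\mu_r$ is an optimal plan for \eqref{eq:red_rad}, and then uniqueness of the cyclical map would follow from a rigidity argument as in \cite{CoDePDMa}.

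Second, I would try to establish the geometric description of the supports of optimal plans. By Proposition~\ref{prop:optccyclicmonotone} any optimal $\gamma$ for \eqref{eq:red_rad} is concentrated on a $c_1$-cyclically monotone set, and — as in Theorem~\ref{teo:1DN} — symmetry upgrades this to the statement that for $\gamma$-a.e.\ $r,s$ the pair $(r,s)$ minimizes the symmetrized two-point cost over all partitions $p\subset\{1,\dots,N\}$ and all relabelings $\sigma\in\S_N$. The key lemma to prove is then a dichotomy analogous to the ``well-ordered'' condition of \cite{CoDePDMa}: after sorting the radii, consecutive entries of the two configurations must interleave. Once each level block $A_i$ (with $\mu_r(A_i)=1/N$) is pinned this way, the only admissible cyclical map is the one that sends $A_i$ onto $A_{i+1}$ in an orientation-reversing manner, with the parity-dependent prescription on the last block $A_{N-1}$ — which is exactly the piecewise definition of $S$ in the statement.

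Third, I would verify that $\gamma_S$ is itself concentrated on a set with the interleaving property, so that it realizes the extremal form of $c_1$-cyclical monotonicity; in particular the two-marginal projection $(e_1,e_2)_{\sharp}\gamma_S$, restricted to the ordered cell, is monotone in the classical sense and hence induced by a map, which forces $\gamma_S$ to be (the symmetrization of) a cyclical map and to be optimal. Absolute continuity of $\mu$ guarantees that the ambiguous configurations form a null set, which is what makes $S$ well defined $\mu_r$-a.e.\ and the cyclical optimal map essentially unique.

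The main obstacle — and one I expect to be genuinely serious — is controlling the reduced cost $c_1$ itself. Unlike $\sum_{i<j}1/|x_i-x_j|$, the function $c_1(r_1,\dots,r_N)$ is defined through an inner minimization over the angular positions on the spheres of radii $r_i$, and there is no a priori reason for the optimal angular configuration to be the collinear/antipodal one that the map $S$ implicitly assumes. The interleaving dichotomy above needs a Monge-type (submodularity) inequality for the second differences of $c_1$, and this can fail: already for $N=3$ and $d\geq 2$ the optimal angular arrangement may be a genuine triangle rather than three collinear points, which is precisely the loophole through which the counterexamples enter. So I would expect the argument to go through only for special classes of radial measures (or small $N$, or $d=1$), and the honest conclusion of the analysis is that the conjecture as stated is not true in full generality.
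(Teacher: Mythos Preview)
The statement you are addressing is a \emph{conjecture}, and the paper offers no proof of it; on the contrary, immediately after stating it the paper reports that it has been \emph{disproved} for $d\geq 2$ and $N=3$ (citing \cite{CoStra,DMaGGNe}), with the counterexamples arising from radial measures concentrated on thin annuli. So there is no ``paper's own proof'' to compare against.

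That said, your proposal is not off the mark as an analysis. You correctly identify the scheme one would \emph{try} --- transplant the interleaving/well-ordering argument of Theorem~\ref{teo:1DN} to the reduced one-dimensional problem \eqref{eq:red_rad} --- and you correctly locate the point where it breaks: the reduced cost $c_1$ is defined through an inner angular minimization, and the submodularity/second-difference inequality that drives the well-ordered dichotomy in \cite{CoDePDMa} need not survive this minimization. Your observation that for $N=3$, $d\geq 2$ the angular optimum can be a genuine triangle rather than a collinear configuration is exactly the mechanism behind the known counterexamples; in \cite{DMaGGNe} this is made precise by showing that, in the thin-annulus limit, the reduced problem degenerates to the repulsive harmonic one, for which the Seidl map is not optimal. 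Your closing sentence --- that the argument should only go through for special classes of radial measures and that the conjecture is false in full generality --- is therefore the correct conclusion, and it matches the paper's own assessment.
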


However in the recent papers \cite{CoStra, DMaGGNe} this conjecture is proven to be wrong, looking at radial measures $\mu$ concentrated in some thin annulus $\{ 1- \delta \leq |x| \leq 1+\delta\}$; in particular in \cite{DMaGGNe} the $1D$ problem is proven to be equivalent, when $\delta \to 0$, to the repulsive harmonic one, for which high non-uniqueness holds and the Seidl map is not optimal (see the following section).
However in \cite{CoStra} also a positive example is found, namely a class of measures for which the conjecture holds.

These results show that the solution to the multi marginal problem with Coulomb cost is far from being understood: in particular there is no clear condition on the marginal $\mu$ for the strong Seidl conjecture to hold.
% !TEX root = ../book.tex

\section{Multi-marginal OT with repulsive Harmonic Cost}
\label{sec:OTHarm}

$\quad$ This section is devoted to the study of the repulsive harmonic cost. In DFT-OT problem (see section
\ref{sec:why}), we replace the electron-electron Coulomb interaction  by a weak force which decreases with the square distance of the particles. The term \textit{weak} comes from the fact that, in the repulsive harmonic cost the, interaction cost function has value zero when the particles overlap, instead of $\infty$ in the Coulomb case. 

More precisely, we are interested in characterizing the minimizers of the following problem
\begin{equation}\label{OTHarm:pb:MKweak}
(\MK_{weak}) \quad \min_{ \gamma \in\Pi((\R^d)^N,\mu_1,\dots,\mu_N)}
\ds \int_{(\R^d)^N}\ds\sum^N_{i,j=1}-\vert x_j-x_i\vert^2 d\gamma(x_1,\dots,x_N),
\end{equation}
where $\mu_1,\dots,\mu_N$ are absolutely continuous probability measures in $\R^d$. 

From a mathematical viewpoint this cost has some advantages compared to the Coulomb one, since here we can do explicit examples. We can interpret the solutions of \eqref{OTHarm:pb:MKweak} as an \textit{ansatz} for DFT problem (Hohenberg-Kohn functional \eqref{eq.FHK}) for particles interacting under the repulsive harmonic potential. From a technical aspect, this could be an interesting toy model to approach the case of Coulomb cost. From applications, some minimizers of \eqref{OTHarm:pb:MKweak} seem to have no particular relevance in physics because, as we will see in the examples \ref{OTHarm:ex:breathing2marginals}, \ref{OTHarm:ex:evenmarginals} and \ref{OTHarm:ex:2Np} below, certain optimal $\gamma$ of \eqref{OTHarm:pb:MKweak} allow particles to overlap.

%From a mathematical viewpoint this cost has some advantages compared to the Coulomb one. For instance, the repulsive harmonic cost does not archive the value $\infty$, which was a technical aspect in section \ref{sec:Coulomb}.
%
%We can interpret the solutions of \eqref{OTHarm:pb:MKweak} as an \textit{ansatz} for DFT problem (Hohenberg-Kohn functional \eqref{eq.FHK}) for particles interacting under the repulsive harmonic potential. From a technical aspect, this could be an interesting toy model to approach the case of Coulomb cost. From applications, some minimizers of \eqref{OTHarm:pb:MKweak} seems to have no particular relevance in physics because, as we will see in the examples \ref{OTHarm:ex:evenmarginals} and \ref{OTHarm:ex:2Np} below, certain optimals $\gamma$ of \eqref{OTHarm:pb:MKweak} allows particles to overlap.

We will see that this problem has a very rich structure, that is very different from the classical $2$-marginal case. First of all we notice that minimizers of this problem are also minimizers of the problem with the cost $c(x_1, \ldots, x_n) = | x_1 + \ldots + x_n|^2$; in fact we have that
$$ \int_{\R^{dN}} \sum_{i,j=1}^N - \vert x_i - x_j \vert ^2 \, d \gamma = 2\int_{\R^{dN}} c \, d \gamma - (N+1) \sum_{i=1}^N \int_{\R^d} |x|^2 \, d \mu_i, $$
but this last additive term depends only on the marginals and not on the specific plan $\gamma$. The cost $c$ is very particular since it has a wide class of ``trivial'' optimal plans, that is the ones that are concentrated on $x_1 + \ldots + x_n=0$; however the structure is very rich, see Lemma \ref{OTHarm:stuplemma}.

Concerning the existence of minimizers of  \eqref{OTHarm:pb:MKweak}, we will assume that the measures $\mu_i$ have finite second moments; then existence follows immediately  from the equality
$$
\ds \underset{\gamma}{\operatorname{argmin}} \int -\sum^N_{i=1}\sum^N_{j=i+1}\vert x_j-x_i\vert^2 d\gamma =  \underset{\gamma}{\rm argmin} \int \vert x_1 + \dots + x_N \vert^2 d\gamma. 
$$
%Moreover, assuming that $c_{w}$ is defined 
%\be
%\tilde{c}_{w}(x_1,\dots,x_N) = \Bigg\lbrace \begin{array}{ll}
%-\sum^N_{i=1}\sum^N_{j=i+1}\vert x_j-x_i\vert^2, \quad \text{ in a compact set } K \subset \R^{dN} \\
%\infty, \quad \quad \quad \quad \quad \quad \quad \quad \quad \quad \quad \text { outside } K.
%\end{array}
%\ee
%the corollary \ref{cor:CoDMa} can be applied and then
%\begin{multline}\label{OTHarm:ineq}
%\ds \min \bigg\lbrace\int_K \tilde{c}_w d\gamma \ : \ \gamma \in \Gamma (\mu)\bigg\rbrace \leq \inf \bigg\lbrace \int_K \tilde{c}_w(x,T_1(x),\dots, T_{N-1}(x)) d \mu \ : \ T_{1\sharp} \mu =\dots = T_{N-1\sharp}\mu=\mu\bigg\rbrace \\ \leq 
%\inf \bigg\lbrace \int_K \tilde{c}_w(x,T(x), \dots, T^{N-1}(x)) d \mu \ : \ \begin{array}{ll}
%T_{\sharp}\mu = \mu ,\\
%\ T^N=I
%\end{array}  \bigg\rbrace.
%\end{multline}
In particular, Corollary \ref{cor:CoDMa} holds in this case too. Notice that the fact that the repulsive harmonic cost is smooth and has linear gradient does not make the Multi-marginal Optimal Transportation problem easier compared to the Coulomb cost. In fact, in this case the problem is that if we write down the optimality conditions for the potentials, in the case described in Lemma \ref{OTHarm:stuplemma}, we simply find the condition $x_1+ \ldots +x_N = const$, since in this case the potentials are linear functions.

However, we can enjoy the symmetries of this problems and build easy Monge solutions for some particular cases of \eqref{OTHarm:pb:MKweak}, see examples \ref{OTHarm:ex:3p}, \ref{OTHarm:ex:breathing2marginals}, \ref{OTHarm:ex:evenmarginals}, \ref{OTHarm:ex:2Np} and \ref{OTHarm:ex:trueplan} below.

Before stating the main result in the multi-marginal setting, we start analyzing the problem \eqref{OTHarm:pb:MKweak} in the $2$-marginals case, where everything seems to work fine, just as in the square distance case.

\begin{prop}
Let $\mu,\nu \in \mathcal{P}(\R^d)$ and $c_w(x,y) = -\vert x-y\vert^2$ be the opposite of the square-distance in $\R^d$. Suppose that $\mu$ is an absolutely continuous with respect to the Lebesgue measure in $\R^d$ and $\nu$ has no atoms. Then, there exists a unique optimal transport map $T:\R^d\to\R^d$ for the problem
\[
\ds \min \bigg\lbrace\int c_w(x,y) d\gamma \ : \ \gamma \in \Pi_2(\R^{2d},\mu,\nu)\bigg\rbrace = \inf \bigg\lbrace \int c_w(x,T(x)) d \mu \ : \ T_{\sharp} \mu =\nu\bigg\rbrace 
\]
Moreover, $T = \nabla \phi$, where $\phi:\R^d\to\R$ is a  concave function and there exists a unique optimal transport plan $\overline{\gamma}$, that is $\overline{\gamma} = (\Id\times T)_{\sharp}\mu$ . 
\end{prop}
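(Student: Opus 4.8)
The plan is to reduce the repulsive-harmonic $2$-marginal problem to the classical (attractive) quadratic problem, to which Brenier's theorem applies. The key identity is that, as noted in the surrounding text, minimizing $\int -|x-y|^2\,d\gamma$ is equivalent to maximizing $\int x\cdot y\,d\gamma$, since $\int -|x-y|^2\,d\gamma = -\int |x|^2\,d\mu - \int |y|^2\,d\nu + 2\int x\cdot y\,d\gamma$ and the first two terms depend only on the marginals. Equivalently, minimizing $\int -|x-y|^2\,d\gamma$ over $\Pi_2(\mathbb R^{2d},\mu,\nu)$ is the same as minimizing $\int -x\cdot y\,d\gamma$, which is the Monge--Kantorovich problem with cost $\tilde c(x,y) = -x\cdot y$. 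This is the prototypical Brenier cost up to sign: $\tilde c(x,y) = \frac12|x-y|^2 - \frac12|x|^2 - \frac12|y|^2$, so again minimizing $\int \tilde c\,d\gamma$ differs from minimizing $\int \frac12|x-y|^2\,d\gamma$ only by a marginal-dependent additive constant (note finiteness of these constants is exactly where the finite-second-moment hypothesis is used, though here it is already built into $\mu,\nu\in\mathcal P(\mathbb R^d)$ with the quadratic cost making sense).

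Once this reduction is in place, I would invoke Brenier's theorem (the $N=2$, $d$ arbitrary case — which is cited earlier, e.g.\ through the classical theory referenced in \cite{AGS,VilON} and the discussion around Theorem \ref{th:cotar}): since $\mu$ is absolutely continuous with respect to Lebesgue measure, there is a unique optimal plan for the quadratic cost $\frac12|x-y|^2$, it is deterministic, induced by $T = \nabla\varphi$ with $\varphi$ convex, and $T$ is $\mu$-a.e.\ uniquely determined. Transcribing back through the sign change: the cost $c_w$ is maximized-correlation, so the potential that appears is $\phi = -\varphi + (\text{quadratic})$, hence $T = \nabla\phi$ with $\phi$ \emph{concave}; more precisely, writing $\varphi(x) = \frac12|x|^2 - \phi(x)$ gives $T(x) = x - \nabla\phi(x)$... so one should be slightly careful about which convention puts $\phi$ concave, but in any case $T$ is the gradient of a concave function. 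The uniqueness of the optimal map and of the optimal plan $\overline\gamma = (\mathrm{Id}\times T)_\sharp\mu$ follows directly from the corresponding uniqueness in Brenier's theorem, since the correspondence between plans for $c_w$ and plans for $\frac12|x-y|^2$ is a bijection preserving optimality.

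The remaining equality, $\min\{\int c_w\,d\gamma : \gamma\in\Pi_2\} = \inf\{\int c_w(x,T(x))\,d\mu : T_\sharp\mu = \nu\}$, is then immediate: the $\min$ is attained by $\overline\gamma$ which is deterministic, so $\min(\text{Kantorovich}) = \int c_w(x,T(x))\,d\mu \geq \inf(\text{Monge}) \geq \min(\text{Kantorovich})$, the last inequality being the general fact (recalled in the excerpt) that Monge plans are admissible Kantorovich plans. Alternatively one can just cite Corollary \ref{cor:MNMK} / Theorem \ref{thm:PratelliMongeKantorovich}, but the direct argument is cleaner here since we already have the deterministic minimizer in hand.

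The only genuine subtlety — hence the ``main obstacle'' — is bookkeeping the sign conventions so that the statement reads with $\phi$ \emph{concave} and $T = \nabla\phi$ (rather than $T = x - \nabla(\text{convex})$); this is purely a matter of choosing the right normalization of the dual potential, and one should verify that the hypothesis ``$\nu$ has no atoms'' is actually what is needed (it guarantees, together with $\mu\ll\mathcal L^d$, that we are in the regime of uniqueness — in fact $\mu\ll\mathcal L^d$ alone suffices for existence and uniqueness of the Brenier map, with the no-atoms condition on $\nu$ ensuring symmetry of the roles / uniqueness of the plan, consistent with Theorem \ref{th:cotar}). No new ideas beyond Brenier are required.
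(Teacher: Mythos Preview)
Your overall strategy---reduce to Brenier and read off the map as the gradient of a concave function---is the same as the paper's, but your algebraic reduction contains a sign error that actually matters. From $-|x-y|^2 = -|x|^2 - |y|^2 + 2x\cdot y$ you conclude that minimizing $\int -|x-y|^2\,d\gamma$ is equivalent to \emph{maximizing} $\int x\cdot y\,d\gamma$; in fact it is equivalent to \emph{minimizing} $\int x\cdot y\,d\gamma$, hence to \emph{maximizing} $\int \tfrac12|x-y|^2\,d\gamma$. So your chain of identities lands you back at the repulsive problem, not at the attractive Brenier problem, and your hedging about ``being careful with sign conventions'' does not resolve this: without a further change of variables you cannot invoke Brenier's theorem for $\tfrac12|x-y|^2$ between $\mu$ and $\nu$.

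The fix---and this is exactly what the paper does---is a reflection: set $G=-T$ and $\tilde\nu = (-\mathrm{Id})_\sharp\nu$. Then $|x-G(x)|^2 + |x-T(x)|^2 = 2|x|^2 + 2|T(x)|^2$, so with $C = 2\int|x|^2\,d\mu + 2\int|y|^2\,d\nu$ one has
\[
C + \inf_{T_\sharp\mu=\nu}\int -|x-T(x)|^2\,d\mu \;=\; \inf_{G_\sharp\mu=\tilde\nu}\int |x-G(x)|^2\,d\mu.
\]
Now Brenier applies to the right-hand side (here is where absolute continuity of $\mu$ is used), giving a unique $G=\nabla\psi$ with $\psi$ convex; hence $T=-G=\nabla(-\psi)$ with $-\psi$ concave, and uniqueness of the plan follows. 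Once you insert this reflection step, the rest of your argument (including the Monge $=$ Kantorovich conclusion from having a deterministic minimizer) goes through.
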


\begin{proof}
This result is a easy consequence of the Brenier's theorem. Indeed, it is enough to verify that, taking $C= 2 \int |x|^2\, d \mu  + 2 \int |x|^2 \, d \nu $, we have 
\[
 C + \underset{T_{\sharp} \mu =\nu}{\inf}  \int -\vert x-T(x) \vert^2 d \mu(x) =  \underset{ G_{\sharp} \mu =\tilde{\nu}}{\inf} \int \vert  x - G(x) \vert^2  d\mu(x)
\]
where $G = -T$ and $\tilde \nu = (- Id)_{\sharp} \nu$. Then by Brenier's theorem there exists an unique optimal map $G$ which can be written as $G(x) = \nabla \psi(x)$, and $\psi:\R^d\to\R$ is a convex map. In other words, $T$ is a gradient of a concave function. %The regularity of the map follow from the classical theory of optimal transport.
\end{proof}

Notice that if we suppose $\mu = f(x)dx$ and $\nu = g(x)dx$ are probability measures with densities $f,g$ concentrated, respectively, in convex sets $\Omega_f,\Omega_g$ and assume there exits a constant $\lambda >0$ such that $\lambda \leq f,g \leq 1/\lambda$. Then, $T$ is a $C^{1,\alpha}$ function inside $\Omega_f$ \cite{Caf,Caf2}.

\begin{exam}[2 marginals case, uniform measure in the $d$-dimensional cube] \label{OTHarm:Exam2Marginals}
Suppose that $\mu = \nu = \mathcal{L}\big|_{[0,1]^d}$. In this case, we can verify easily that the optimal map $T:[0,1]^d\to [0,1]^d$ is given by the anti-monotone map $T(x) = (1,\ldots,1)-x$. 
\end{exam}

Surprisingly, the next theorem says that we can not always expect Caffarelli's regularity for the optimal transport maps for the repulsive harmonic cost with finitely many marginals $N>2$ even when the support of the measures has convex interior (see corollary \ref{OTHARM:maincor} below). Before stating the theorem we will prove a lemma that characterizes many optimal plans:

\begin{lemma}\label{OTHarm:stuplemma}
Let $\{ \mu_i \}_{i=1}^N$ be probability measures on $\R^d$ and $h:\R^d\to\R$ be a strictly convex function and suppose $c:([0,1]^d)^N\to\R$ a cost function of the form $c(x_1,\dots,x_N) = h(x_1+\dots+x_N)$. Then, if there exists a plan $\gamma \in \Pi ( \mu_1, \ldots, \mu_N)$ concentrated on some hyperplane of the form $x_1+ \ldots x_N = k$, this plan is optimal for the multi marginal problem with cost $c$ and $ {\rm supp} ( \tilde \gamma ) \subset \{ x_1+ \ldots + x_N=k \} $ is a necessary and sufficient condition for $\tilde \gamma$ to be optimal. In this case we will say that $\gamma$ is a flat optimal plan and $\{ \mu_i \}_{i=1}^N$ is a flat $N$-tuple of measures.
\end{lemma}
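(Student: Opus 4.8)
The plan is to reduce the whole statement to Jensen's inequality for the convex function $h$, applied to the law of the sum map $S(x_1,\dots,x_N) = x_1 + \dots + x_N$. The first observation is that for any $\tilde\gamma \in \Pi(\mu_1,\dots,\mu_N)$ one has $\int c\, d\tilde\gamma = \int_{\R^d} h\, d\nu_{\tilde\gamma}$, where $\nu_{\tilde\gamma} := S_{\sharp}\tilde\gamma \in \mathcal{P}(\R^d)$, so the cost depends on $\tilde\gamma$ only through this push-forward; and that the barycenter of $\nu_{\tilde\gamma}$ equals $m := \sum_{i=1}^N \int x\, d\mu_i(x)$ regardless of the coupling, since the $i$-th marginal of $\tilde\gamma$ is $\mu_i$. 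Under the standing finite-second-moment assumption the integral $\int c\, d\tilde\gamma$ is finite; more generally it is well defined in $(-\infty,+\infty]$ because a finite convex $h$ lies above an affine function (any subgradient at any point gives an affine minorant).

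Second, Jensen's inequality gives, for every admissible $\tilde\gamma$,
\[
\int c\, d\tilde\gamma = \int_{\R^d} h\, d\nu_{\tilde\gamma} \ \geq\ h\Big( \int_{\R^d} s\, d\nu_{\tilde\gamma}(s)\Big) = h(m),
\]
a lower bound for the multi-marginal problem with cost $c$ that does not depend on the plan. Now if $\gamma$ is concentrated on $\{x_1 + \dots + x_N = k\}$, then $\nu_{\gamma} = \delta_k$, whence its barycenter is $k$; therefore necessarily $k = m$ and $\int c\, d\gamma = h(k) = h(m)$ realizes the lower bound. This proves that such a $\gamma$ is optimal and, incidentally, that the only affine subspace $\{x_1+\dots+x_N = k\}$ able to carry a flat optimal plan is the one with $k = m$.

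For the sufficiency and necessity of the support condition I would invoke the equality case of Jensen: since $h$ is \emph{strictly} convex and finite (hence subdifferentiable everywhere), $\int h\, d\nu = h\big(\int s\, d\nu\big)$ forces $h$ to coincide with a supporting affine function of $h$ at $m$ on $\operatorname{supp}\nu$, and for strictly convex $h$ such a supporting hyperplane touches the graph at the single point $m$; hence $\nu = \delta_m$. Chaining the equivalences,
\[
\tilde\gamma \ \text{optimal} \iff \int h\, d\nu_{\tilde\gamma} = h(m) \iff \nu_{\tilde\gamma} = \delta_m \iff \operatorname{supp}\tilde\gamma \subset \{x_1 + \dots + x_N = m\},
\]
and since $m = k$ this is exactly the asserted characterization; ``flat optimal plan'' and ``flat $N$-tuple'' are then just names for this situation.

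The argument is soft and I do not anticipate a genuine obstacle. The only points deserving care are checking that $\int c\, d\tilde\gamma$ is unambiguously defined for every plan (handled by the affine minorant of $h$, or by the moment hypothesis), and quoting the equality case of Jensen in the correct form, i.e.\ using \emph{strict} convexity of $h$ so that the supporting hyperplane at the barycenter touches the graph only there and the minimizing plans are genuinely forced to be flat.
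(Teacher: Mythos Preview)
Your proof is correct and follows essentially the same approach as the paper: both compute that the barycenter of the sum is fixed by the marginals, apply Jensen's inequality to $h$ to obtain the lower bound $h(m)$, observe that a plan concentrated on $\{x_1+\dots+x_N=k\}$ attains it (forcing $k=m$), and then invoke strict convexity for the equality case. Your presentation via the push-forward $\nu_{\tilde\gamma}=S_\sharp\tilde\gamma$ and your explicit attention to well-definedness of the cost are minor stylistic additions, but the argument is the same.
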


\begin{proof} First of all we show that $k$ is fixed by the marginals $\mu_i$, and it is in fact the sum of the barycenters of these measures. Let $c_i = \int x \, d \mu_i$; then let us suppose that there exists $\gamma \in \Gamma ( \mu_1 , \ldots, \mu_N)$ that is concentrated on $\{x_1+ \ldots + x_N = k \}$. Then, using that the $i$-th marginal of $\gamma$ is $\mu_i$, we can compute
$$k = \int (x_1+ \ldots + x_N ) \, d \gamma = \sum_{i=1}^N \int x_i \, d \gamma = \sum_{i=1}^N \int x \, d \mu_i = \sum_{i=1}^N c_i.$$
In particular we notice also that for every admissible plan $\tilde \gamma$ we have $\int (x_1+ \ldots + x_N) \, d \tilde \gamma = k$ and so by Jensen inequality we have
$$\int h( x_1 + \ldots + x_N ) \, d \tilde \gamma \geq h \left( \int  (x_1+ \ldots + x_N) \, d \tilde \gamma \right ) = h (k) = \int h( x_1 + \ldots + x_N ) \, d \gamma.$$
This proves that $\gamma$ is an optimal plan. Thanks to the strict convexity of $h$, this shows also that if $\tilde \gamma$ is optimal then $\tilde \gamma$-a.e. we should have $x_1+ \ldots + x_N =k$.
\end{proof}

This reveals a very large class of minimizers in some cases, as we will see later. However not every $N$-tuple of measures is flat: it is clear that we can have marginals such that there is no plan with such property:

\begin{oss} Let $N=3$ and $\mu_i=\mu$ for every $i=1,2,3$ with $\mu=\nu_1 + \nu_2$ with $\nu_1=(-Id)_{\sharp} \nu_2$ and $\nu_1$ concentrated on $[2,3]$. Now it is clear that the barycenter of $\mu$ is $0$ but for every $3$ points $x_1$, $x_2$, $x_3$ in the support of $\mu$ we cannot have $x_1+x_2+x_3=0$: two of them have the same sign, let's say $x_1$ and $x_2$, but then we have $|x_1+x_2| \geq 4 > 3 \geq |x_3|$, which contradicts $x_1+x_2=-x_3$. So there is no an admissible plan concentrated on $H_0=\{ x_1+x_2+x_3=0 \}$; in fact we showed that for every $\gamma \in \Pi ( \mu) $ we have ${\rm supp} (\gamma) \cap H_0 = \emptyset$. \end{oss}

\begin{oss} In the case $N=2$ the condition for which there exists an admissible plan $\gamma$ concentrated on an hyperplane of the form $x_1 +x_2=k$ implies that the two measures $\mu$ and $\tilde \nu = (-Id )_{\sharp} \nu$ are equal up to translation. This condition is in fact very restrictive. However when we think at the DFT problem and in particular we assume that $\mu=\nu$ then the said condition amounts to have that $\mu$ is centrally symmetric about its barycenter and in the context of density of $2$ electrons around a nucleus this seems a fairly natural condition.
\end{oss}

\begin{teo}\label{OTHarm:mainthm}
Let $\mu_i = \mu = \mathcal{L}^d\big|_{[0,1]^d}, \forall i =1,\dots,N$ be the uniform measure on the $d$-dimensional cube $[0,1]^d \subset \R^d$, $h:\R^d\to\R$ a convex function and suppose $c:([0,1]^d)^N\to\R$ a cost function such that $c(x_1,\dots,x_N) = h( x_1+\dots+x_N )$. Then, there exits a transport map $T:[0,1]^d\to [0,1]^d$ such that $T^{N}(x) = x$ and 
$$
\ds \min_{\gamma \in \Pi_N(\mu)} \int c d\gamma = \min_{{\substack{T_{\sharp}\mu = \mu ,\\ T^{(N)}=I}}} \int c(x,T(x), \dots, T^{(N-1)}(x)) d \mu 
$$

Moreover, $T$ is not differentiable at any point and it is a fractal map, meaning that it is the unique fixed point of a ``self-similar'' linear transformation $\mathcal{F}$ acting on $L^{\infty}([0,1]^d;[0,1]^d)$.

\end{teo}

\begin{proof}
We express every $z \in [0,1]$ by its base-$N$ system, $z = \sum^{\infty}_{k=1}\frac{a_k}{N^k}$ with $a_k \in \lbrace 0,1\dots,N-1\rbrace$. Consider the map given by $S(z) = \sum^{\infty}_{k=1}\frac{\mathcal{S}(a_k)}{N^k}$, where $ \mathcal{S}$ is the permutation of $N$ symbols such that $\mathcal{S}(i) = i+1, \forall \lbrace i=0,1, \dots,N-2\rbrace$ and $\mathcal{S}(N-1) = 0$. A straightforward computation shows that
\begin{equation}\label{eqn:bary}
 z + \sum^N_{i=1}S^i(z)  = \dfrac{N}{2}
\end{equation}
Let $T:\R^d\to\R^d$ be a map defined by $T(x) = T(z_1,\dots,z_d) = (S(z_1),\dots, S(z_d))$ and denote by $T^{(j)}(x) = (S^{(j)}(z_1),\dots,S^{(j)}(z_d)), \ j=1,\dots,N-1$. We will first show that $S$ is a measure-preserving map. In fact, we can show that there exist functions $S_k:[0,1]\to[0,1]$ defined recursively by
%$$
%S_0 = \Id, \quad \text{ and } \quad 
%S_{k+1}(x) = \Bigg\lbrace \begin{array}{c} S_k(x+1/3)/3, \text{ if } x \in [0,1]\setminus C_k  \\
%S_k(x-2/3)/3,  \text{ if } x \in C_k \quad \quad \quad \end{array}
%$$
%where $C_k$ are sets defined also recursively by 
%$$C_0 = [0,1] \text{ and }  C_{k} =]2/3^k,3/3^k]\cup]5/3^k,6/3^k]\cup\dots\cup](3^k-1)/3^k,1]$$. 
%Moreover, we have
%\[
%(S_k)_{\sharp}\mathcal{L}_{[0,1]} = \mathcal{L}_{[0,1]} \ \forall k \in \N, \quad \text{ and } \quad S_k \to S \ \textit{uniformly.} 
%\]
%and hence 
%\[
%\int f(x) S_{\sharp}\mathcal{L}^d = \int f(x)dx ,\quad \forall f \in C_0([0,1]^{d}).
%\]
$$
S_0(x) = x, \quad \text{ and } \quad 
S_{k+1}(x) =T_{k+1} ( S_k (x) )
$$
where $T_{k}$ acts only of the $k$-th digit: $T_{k}(x) = x - (N-1) \cdot N^{-k}$ if $x \in C_k$  and $T_{k} (x) =x+ N^{-k}$ if $x \in [0,1] \setminus C_k$. The sets $C_k$ are defined by 
$$C_{k}=\bigcup_{j=1}^{N^{k-1}} \left( \frac {j}{N^{k-1}}-\frac {1}{N^{k}} , \frac j{N^{k-1}} \right ]. $$
Moreover, it is easy to see that
\[
(T_k)_{\sharp}\mathcal{L}_{[0,1]} = \mathcal{L}_{[0,1]} \ \forall k \in \N, \quad \text{ and } \quad S_k \to S \ \textit{uniformly.} 
\]
Hence 
\[
\int f(x) S_{\sharp}\mathcal{L}^d = \int f(x)dx ,\quad \forall f \in C_0([0,1]).
\]
Now, it remains to show that $T$ is optimal. But this is true thanks to the fact that \eqref{eqn:bary} implies that the plan induced by $T$ satisfies the hypothesis of Lemma \ref{OTHarm:stuplemma}.

It is clear that we can reduce to prove the non-differentiability and the fractal properties only for the map $S$. The non-differentiability comes from the fact that for each $z \in [0,1]$, we have that the base-$N$ representation $z = \sum^{\infty}_{k=1}\frac{a_k}{N^k}$ is such that $a_k$ can't be definitively $N-1$. Now it is sufficient to choose those $k_j$ such that $a_{k_j} \neq N-1$ and consider the numbers $z_j=z+(N-1-a_{k_j})/N^{k_j}$ and $z'_j=z \pm 1/N^{k_j}$ (depending on whether $a_{k_j}=0$ or not) which have the same digits as $z$ apart from the $j_k$-th digit. Then it is straightforward to see that $S(z_j)-S(z)= - (a_{k_j} + 1) / N^{k_j}$ while $S(z'_j)-S(z)=z'_j-z$; in particular we have $\frac{S(z'_j) - S(z)}{z'_j -z} = 1$ while $ \frac{S(z_j) - S(z)}{z_j -z} \leq - \frac 1N$ and so, letting $j \to \infty$, we get that $S$ is not differentiable at the point $z$.

As for the fractal property: we consider the transformation
$$\mathcal{F}(g) (x) = \begin{cases} \frac 1N g(Nx - i) + \frac { i+1}N \qquad &\text{ for } \frac iN \leq x <  \frac {i+1}N  \text{, with } i=0, \ldots, N-2\\
\frac 1N g(Nx - N-1)  & \text{ for } \frac {N-1}N \leq x < 1. \end{cases}$$
In order to see what the construction is doing we imagine to divide $[0,1]^2$ in a grid $N \times N$ of squares and then we are putting scaled copies (by a factor $N$) of the original function in the above diagonal squares and in the rightmost bottom one. % we are scaling the function $g$ by $N$, both in the domain and codomain, and then we put $N$ different copies of it in a way that the graphs belong to the squares just above the diagonal or in the rightmost bottom one (we imagine to divide $[0,1]^2$ in a grid $N \times N$ of squares). 
Now it is easy to see that $S$ is the unique\footnote{We notice that $\mathcal{F}$ is a contraction: $\| \mathcal{F} (g) - \mathcal{F} (g')\|_{\infty}\leq \frac 1N \| g- g'\|_{\infty}$} fixed point of $\mathcal{F}$, and this gives the property of self-similarity.
\end{proof}

\begin{oss} Another proof of Theorem \ref{OTHarm:mainthm} can be done noticing that the transformation $\mathcal{F}$ (the one defined in the proof) acts as a $1/N$-contraction in the space of measure preserving $L^{\infty}$-bijections from $[0,1)$ to $[0,1)$.
\end{oss}

\begin{cor}\label{OTHARM:maincor}
Let $ \mu = \mathcal{L}^d\big|_{[0,1]^d}$ be the uniform measure on the $d$-dimensional cube $[0,1]^d$ in $\R^d$ and suppose $c:([0,1]^d)^N\to\R$ the $N$-dimensional repulsive harmonic cost
\[
c(x_1,\dots,x_N) = -\sum^N_{i=1}\sum^N_{j=i+1}\vert x_j-x_i\vert^2, \quad \quad (x_1,\dots, x_N) \in ([0,1]^d)^N
\]
Then, there exits an optimal cyclical transport map $T:[0,1]^d\to [0,1]^d$: in particular 
$$
\ds \min_{\gamma \in \Pi_N(\mu)}\int c d\gamma = \min_{{\substack{T_{\sharp}\mu = \mu ,\\ T^{(N)}=I}}} \int c(x,T(x), T^{(2)}(x), \dots, T^{(N-1)}(x)) d \mu.
$$

Moreover, $T$ is not differentiable at any point. 
\end{cor}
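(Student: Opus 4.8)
The plan is to deduce Corollary~\ref{OTHARM:maincor} directly from Theorem~\ref{OTHarm:mainthm} by exhibiting the repulsive harmonic cost as a cost of the form $h(x_1+\dots+x_N)$ with $h$ (strictly) convex, up to an additive constant that is irrelevant to the minimization.

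First I would record the elementary algebraic identity
$$-\sum_{i=1}^N\sum_{j=i+1}^N |x_j-x_i|^2 = \Big|\sum_{i=1}^N x_i\Big|^2 - N\sum_{i=1}^N |x_i|^2,$$
obtained by expanding the squares and using $2\sum_{i<j} x_i\cdot x_j = \big|\sum_i x_i\big|^2 - \sum_i |x_i|^2$. Since all the marginals are equal to $\mu=\mathcal{L}^d\big|_{[0,1]^d}$, for every $\gamma\in\Pi_N(\mu)$ the term $\int -N\sum_i|x_i|^2\,d\gamma$ equals the constant $-N^2\int_{[0,1]^d}|x|^2\,dx$, independent of $\gamma$; the same holds when $\gamma$ is of cyclical Monge type $(\mathrm{Id},T,\dots,T^{(N-1)})_\sharp\mu$, since such a $\gamma$ is just a particular transport plan. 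Hence the Monge--Kantorovich problem and the cyclical Monge problem for the repulsive harmonic cost have exactly the same minimizers as the corresponding problems for the cost $c(x_1,\dots,x_N)=h(x_1+\dots+x_N)$ with $h(y)=|y|^2$, and the two sets of problems differ only by this additive constant.

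Then I would apply Theorem~\ref{OTHarm:mainthm} with this convex $h$: it produces a map $T:[0,1]^d\to[0,1]^d$ with $T^{(N)}=\mathrm{Id}$, $T_\sharp\mu=\mu$, which realizes $\min_{\gamma\in\Pi_N(\mu)}\int h(x_1+\dots+x_N)\,d\gamma$ and which is nowhere differentiable, being the fixed point of the contraction $\mathcal F$. By the previous paragraph the same $T$ is an optimal cyclical map for the repulsive harmonic cost, and the claimed equality between $\min_{\gamma\in\Pi_N(\mu)}\int c\,d\gamma$ and the cyclical Monge value follows, with the non-differentiability inherited verbatim. (Alternatively one can quote Corollary~\ref{cor:CoDMa} to identify $\inf(\MK)$ with $\inf(\MN)$ and the cyclical value, and then invoke the theorem only for the attainment by the explicit $T$.)

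There is essentially no obstacle here beyond bookkeeping: the entire difficulty sits in Theorem~\ref{OTHarm:mainthm} (the explicit base-$N$ construction and the verification, via \eqref{eqn:bary}, that the plan induced by $T$ is concentrated on $\{x_1+\dots+x_N=N/2\}$, hence flat in the sense of Lemma~\ref{OTHarm:stuplemma}). The only point deserving a line of care is to check that the $\gamma$-independent constant $-N^2\int_{[0,1]^d}|x|^2\,dx$ does not affect the identification of minimizers on the Monge side, which is immediate because a transport map is a particular transport plan and the constant is common to both formulations.
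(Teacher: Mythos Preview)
Your proposal is correct and follows essentially the same approach as the paper: the paper's proof simply observes that the repulsive harmonic cost is equivalent (up to a marginal-dependent additive constant) to the cost $|x_1+\dots+x_N|^2$, and then invokes Theorem~\ref{OTHarm:mainthm}. You have merely spelled out the algebraic identity and the bookkeeping in more detail than the paper does.
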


\begin{proof}
%First notice that, for every $\gamma \in \Gamma (\mu)$ we have $\int c(x_1,\dots,x_N)d\gamma =  \int \vert x_1+\dots x_N\vert^2d\gamma + c$, where $c$ is a negative number and then
%$$ \ds \min \bigg\lbrace\int -\sum^N_{i=1}\sum^N_{j=i+1}\vert x_j-x_i\vert^2 d\gamma \ : \ \gamma \in \Gamma (\mu)\bigg\rbrace = \min \bigg\lbrace\int \vert x_1 + \dots + x_N \vert^2 d\gamma  \ : \ \gamma \in \Gamma (\mu)\bigg\rbrace $$ 
As we already observed, the problem with the cost $c$ is equivalent to the problem with the cost $|x_1+ \ldots + x_N|^2$ and the result follows from the Theorem \ref{OTHarm:mainthm}.
\end{proof}

We remark the construction of $T$ in the proof of the Theorem \ref{OTHarm:mainthm} also works if $N=2$ and $T$ is exactly the optimal transport map described in the example \eqref{OTHarm:Exam2Marginals}.  

The unexpected aspect of Corollary \ref{OTHARM:maincor} is the existence - for $N\geq 3$ - of an optimal transport map which is not differentiable almost everywhere. It turns out that this optimal map $T$ could be not unique if $d>1$ and $N\geq 3$ and, in that case, we can construct explicitly a regular optimal map. This kind of richness appears already in the case $d=1$ with the Coulomb cost; however in that case we have uniqueness if we restrict ourselves to the symmetric optimal plan. Here this is not the case as it is easy to see that modifying the action $\mathcal{S}$ on the digits (the important thing is that when we see $\mathcal{S}$ as a permutation, it is a cycle), we obtain another map, and the symmetrized plan is not equal to the one generated by the map described before.

In the following, we are going to present some concrete examples where we have other explicit solutions for such kind of optimal maps for $(\mathcal{MK}_{weak})$ in \eqref{OTHarm:pb:MKweak}. For these examples the goal is to show that there can be smooth optimal maps but they don't necessarily satisfy the ``group rule'', that is, we can find maps $T_2, \ldots T_N$ such that $(Id, T_1, T_2, \ldots , T_{N-1} )_{\sharp} \mu_1$ is an optimal plan but $T_1 \circ T_1 \neq T_i$ for any index $i \in \lbrace 1,\dots,N-1\rbrace$.

\begin{exam}[3 particles, asymmetric]\label{OTHarm:ex:3p}
Consider the case when three particles are distributed in $\R^3$ as Gaussians, $\mu_1 = \mu_2 = \frac{1}{(2\pi)^{3/2}}\exp(-\frac{1}{2}(x_1^2+x_2^2+x_3^2))$ and $\mu_3 =  \frac{1}{2\pi^{3/2}}\exp(-(x_1^2+x_2^2+x_3^2))$. In this case, we can verify that the  couple $(T,S)$ of  maps $T,S:\R^d\to\R^d$, $T(x) = x$ a.e. and $S(x) = -2x$ a.e. is admissible and it is optimal since $ x+ T(x) +S(x) = 0$.
\end{exam}

\begin{exam}[2N particles on $S^1$, %$\mu(x) = \rho(x)\mathcal{L}^d$ symmetric, 
] \label{OTHarm:ex:breathing2marginals} Suppose $\mu_1,\dots, \mu_{2N}$ uniform probability measures on the circle $S^1$. The rotation map $R_{\theta}:\R^2\to\R^2$ with angle $\theta = \pi/N$ is an optimal transport map. Also, the maps $R_{k\theta}, \ k=2,\dots,N$, are optimal transport maps for the repulsive harmonic cost.
\end{exam}

\begin{exam}[2N particles, %$\mu(x) = \rho(x)\mathcal{L}^d$ symmetric, 
\text{breathing} map]\label{OTHarm:ex:evenmarginals} Suppose $\mu_1, \dots, \mu_{2N}$ uniform probability measures on $S^2 \subset \R^3$. Consider the vector $v$, $A:\R^3\to\R^3$ the antipodal map $A(x) = -x$ and $R^{v}_{\theta}:\R^3\to\R^3$ the rotation of angle $\theta = \pi/N$ and direction $v$. Then, $T = A\circ R^v_{\theta}$ is a cyclic optimal transport map. 

The optimal solution $\gamma = (x,T,T^{(2)},\dots,T^{(N-1)})_{\sharp}\mu_1$ is called \textit{``breathing'' solution} \cite{SeiGorSav}: the coupling $\gamma$ represent the configuration where the $2N$ electrons are always at the same distance from the center, opposite to each other in the equilibrium configuration.  Notice that, the map $G = T^2$ is also an cyclical optimal transportation map. Moreover, $T$ is a $C^{\infty}$ function and it is a gradient of the convex-concave function $\phi(x_1,x_2,\dots,x_{2N-1},x_{2N}) = \frac{1}{2}(x_1^2+x_3^2+\dots+x_{2N-1}^2) -\frac{1}{2}(x_2^2+x_4^2+\dots+x_{2N}^2)$. 
\end{exam}

\begin{exam}[2N particles in $\R^d$, symmetric $\rho$]\label{OTHarm:ex:2Np}
In this case, on can consider the maps $T(x) = x$ a.e. and $S(x) = -x$ a.e. which are such that $$c(T(x),S(x),\dots,T(x),S(x)) = 0.$$
We notice that in this case we have $S^{(2)}(x)=x$ and so in particular this solution has the cyclic structure $$(T(x),S(x),\dots,T(x),S(x))=(x,S(x),S^{(2)}(x), \ldots, S^{(2N-1)}).$$ 
%In this case the optimal transport map $(T,S)$ is gradient of a convex-concave function $\psi(x_1,\dots,x_{2N}) = x_1^2/2 - x^2/2 \dots -x^2_{2N-1} + x^2_{2N}$.
\end{exam}

\begin{exam}[Optimal Maps which doesn't satisfy a group law]\label{OTHarm:ex:twomapsplan} Let us consider the case $d=1$ and $N=3$ where the measures  are $\mu_1=\mu_2=\mu_3=\frac 12 \mathcal{L}|_{[-1,1]}$.

Then we define the maps 
 $$ T(x) =  \begin{cases} x+1 \quad & \text{ if } x\leq 0 \\ x-1  & \text{ if }x >0, \end{cases} \qquad S(x) =  \begin{cases} -1-2x \quad & \text{ if } x\leq 0 \\ 1-2x  & \text{ if }x >0. \end{cases}$$
 We have that $T_{\sharp} \mu = \mu $ and $S_{\sharp} \mu = \mu$, and moreover $x+T(x)+S(x)=0$; in particular $(Id, S, T)_{\sharp} \mu$ is a flat optimal plan and so the thesis.
\end{exam}

\begin{exam}[Optimal Transport diffuse plan]\label{OTHarm:ex:trueplan} Let us consider the same problem as in Example \ref{OTHarm:ex:twomapsplan}. Now we consider a general symmetric plan $\gamma =\frac 12 \mathcal{H}^{2}|_{ H}  f(\max \{ |x| , | y| , |z| \}) $, where $H$ is defined as $H= \{ x+y+z=0 \} \cap \{ |x| \leq 1, |y| \leq 1, |z| \leq 1 \}$ and $f$ is a function to be chosen later. This is a symmetric flat optimal plan; now we compute the marginals. Since it is clear that $\gamma$ is invariant under $x \mapsto -x$, it is sufficient to consider the marginal on the set $x >0$. But then we can make the computation

\begin{align*}
\int_{x \geq 0} \phi(x) \, d \gamma(x,y,z) & = \iint_{\substack{|x|, |y|, |x+y| \leq 1,  \\ x \geq 0}} \sqrt{3}\phi(x) f(\max\{ |x|,|y|,|x+y| \} ) \, dx dy  \\
&=  \int_0^1 \int_0^{1-x} \sqrt{3}\phi(x) f(x+y) \, dy \, dx + \sqrt{3} \int_0^1 \int_{-x}^0 \phi (x) f(x) \, dy \, dx \\ & \qquad + \sqrt{3} \int_0^1 \int_{-1}^{-x} \phi(x) f(|y|) \, dy \, dx \\
& = \int_0^1 \sqrt{3}\phi(x) \left( xf(x) + 2\int_x^1 f(t) \, dt\right) \, dx.
\end{align*}
In particular the choice $f(x)=\frac {\sqrt{3}}6 x$ gives the marginals equal to $\frac 12 \mathcal{L}|_{[-1,1]}$. We notice also that any even density $\rho=h(|x|)$ for some decreasing function $h:[0,1] \to [0, \infty)$ can be represented in this way: in fact it is sufficient to choose $f(x) =\frac 1{\sqrt{3}} \Bigl( \frac{h(x)}x - 2x \int_x^1 \frac {h(t)}{t^3} \, d t \Bigr)$.
\end{exam}

\begin{exam}[A Counterexample on the uniqueness $N>3$\label{Passconterexample}] As mentioned in theorem \ref{th:sym:Pass} and in \cite{FMPCK}, it was already understood by Pass that in these high dimensional cases, the solution of repulsive costs may also be non unique, as opposite to the two marginals case. On a higher dimensional surface there can be enough wiggle room to construct more than one measure with common marginals, as shown in the examples \ref{OTHarm:ex:3p}--\ref{OTHarm:ex:trueplan}. In most of the cases, the non-uniqueness seems to be given by the symmetries of the problem, but in Example \ref{OTHarm:ex:trueplan} and Corollary \ref{OTHARM:maincor} this is not the case, as we exploit the fact that the dimension of the set $c(x+y+z) - \phi(x) -\phi(y) -\phi(z)=0$, is greater than the minimal one.
\end{exam}

Finally, the last proposition of this section states that when $d=1$ and odd $N$ we have no hope in general to find piecewise regular cyclic optimal transport maps.

\begin{prop}
Let $\mu = \mathcal{L}|_{[0,1]}$ and $N\geq 3$ be an odd number. Then, the infimum 
\[
\inf \bigg\lbrace \int c(x,T(x), T^{(2)}(x),\dots,T^{(N-1)}) d \mu \ : \ \begin{array}{ll}
T_{\sharp}\mu = \mu ,\\
\ T^N=I \end{array} \bigg\rbrace.
\]
is not attained by a map $T$ which is differentiable almost everywhere. %In particular, the non differentiable transport map $T$ given by the theorem \ref{OTHarm:mainthm} is a Monge minimizer for \eqref{OTHarm:pb:MKweak}.
\end{prop}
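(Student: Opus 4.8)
The plan is to argue by contradiction: suppose some admissible $T$ (that is, $T_\sharp\mu=\mu$ and $T^N=\mathrm{Id}$) is differentiable $\mu$-a.e.\ and attains the infimum, and derive a parity obstruction.

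\emph{Step 1: optimality pins $T$ down.} The minimizers of the repulsive harmonic cost coincide with those of $c(x_1,\dots,x_N)=|x_1+\dots+x_N|^2$, which is of the form $h(x_1+\dots+x_N)$ with $h$ strictly convex, and the fractal map of Theorem~\ref{OTHarm:mainthm} already provides one flat optimal plan; hence Lemma~\ref{OTHarm:stuplemma} applies, and a cyclic map $T$ is optimal if and only if its plan $\gamma_T=(\mathrm{Id},T,\dots,T^{(N-1)})_\sharp\mu$ is flat, i.e.
\[
 x+T(x)+T^{(2)}(x)+\dots+T^{(N-1)}(x)=\tfrac N2\qquad\text{for }\mu\text{-a.e.\ }x ,
\]
the constant being $N$ times the barycenter $\tfrac12$ of $\mu=\mathcal L|_{[0,1]}$. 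So an optimal $\mu$-a.e.-differentiable $T$ satisfies this functional equation.

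\emph{Step 2: measure preservation forces $|T'|=1$ a.e.} Since $T$ is a measurable bijection of $[0,1]$ with measurable inverse $T^{N-1}$, the identity $T_\sharp\mu=\mu$ gives $\mu(T(A))=\mu(A)$ for every measurable $A$, so both $T$ and $T^{-1}$ carry $\mu$-null sets to $\mu$-null sets, i.e.\ satisfy Lusin's condition~(N). Feeding (N) into a Vitali covering argument based on the estimate $|T(y)-T(x)|\le(|T'(x)|+\varepsilon)|y-x|$ valid near each differentiability point yields $\mu(T(A))\le\int_A|T'|\,d\mu$ for all measurable $A$, and together with $\mu(T(A))=\mu(A)$ this gives $|T'|\ge1$ a.e. Running the identical argument for $T^{-1}=T^{N-1}$, and using $(T^{-1})'(T(x))=1/T'(x)$ a.e.\ — obtained by differentiating $T^{-1}\circ T=\mathrm{Id}$ on a full-measure set, which is licit since differentiability already entails continuity and hence the ordinary chain rule — gives $|T'|\le1$ a.e. Therefore $|T'|=1$ $\mu$-a.e.

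\emph{Step 3: differentiate and count.} Because differentiability at a point entails continuity there, the ordinary chain rule holds; hence on the set of $x$ for which $T^{(i)}(x)$ is a differentiability point of $T$ with $|T'(T^{(i)}(x))|=1$ for every $i=0,\dots,N-1$ — a full-measure set, since each $T^{(i)}$ preserves $\mu$ — every iterate $T^{(j)}$, $j=0,\dots,N-1$, is differentiable with $(T^{(j)})'(x)=\prod_{i=0}^{j-1}T'\bigl(T^{(i)}(x)\bigr)\in\{-1,+1\}$. Differentiating the functional equation of Step~1 then gives, a.e.,
\[
 \sum_{j=0}^{N-1}(T^{(j)})'(x)=0 ,
\]
a sum of $N$ numbers each equal to $\pm1$, hence a number of the same parity as $N$. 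As $N$ is odd this sum is odd, so it cannot vanish — a contradiction. Thus no $\mu$-a.e.-differentiable admissible map attains the infimum. The only delicate point is Step~2: deducing $|T'|=1$ a.e.\ from bare a.e.\ differentiability, since without absolute continuity the change-of-variables inequality $\mu(T(A))\le\int_A|T'|$ need not hold; it is rescued precisely because a measure-preserving bijection with measurable inverse automatically satisfies Lusin's condition~(N). Everything else — invoking Lemma~\ref{OTHarm:stuplemma}, the chain rule on a full-measure set, and the parity count — is routine.
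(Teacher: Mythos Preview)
Your proof is correct and follows the same route as the paper's: deduce the flatness equation $x+T(x)+\dots+T^{(N-1)}(x)=N/2$ from optimality (via Lemma~\ref{OTHarm:stuplemma}), establish $(T^{(j)})'\in\{-1,+1\}$ a.e., differentiate, and reach the parity contradiction. The paper's argument is terser---it invokes the Lusin property and the change-of-variables formula in one line to get $(T^{(i)})'=\pm1$---whereas you spell out why Lusin's~(N) holds (measure-preserving bijection with measurable inverse) and derive $|T'|\ge1$ and $|T'|\le1$ separately; this extra care is warranted and improves on the paper's exposition.
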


\begin{proof} % The first affirmation follows from the theorem \ref{OTHarm:mainthm}.
First of all we notice that if $T$ is differentiable almost everywhere then also $T^{(2)}$ has the same property, thanks to the fact that $T_{\sharp} \mu =\mu$. In particular, since the Lusin property holds true for $T^{(i)}$ for every $i=1, \ldots, N$ and $T_{\sharp}\mu=\mu$, the change of variable formula holds and in particular we have that $ (T^{(i)})' (x) = \pm 1$ for almost every $x$ (notice also that $T$ is bijective almost everywhere since $T^{(N)}(x)=x$). 

Since $\mu$ is $N$-flat we have that the condition on $T$ in order to be an optimal cyclical map is $x + T(x) + T^{(2)}(x) + \ldots + T^{(N-1)}(x) = N/2$; now we can differentiate this identity and so we will get 
$$ 1+ T'(x) +(T^{(2)})'(x) + \ldots + (T^{(N-1)})'(x) = 0 \qquad \text{ for a.e. }x.$$
But this is absurd since on the left hand side we have an odd number of $\pm 1$ and their sum will be an odd number.
%We want to show that $T$ is not differentiable almost everywhere. Indeed, suppose that $T$ is locally differentiable in non negligible set $A$. So, for $x\in A$ we have that
%\[
%x + \sum^{N-1}_{k=1}T^k(x) = C, \quad C > 0 \text{ is a constant.} \quad \text{ then } \quad 1 + \dfrac{d}{dx}\sum^{N-1}_{k=1}T^k(x) = 0
%\]
%and so, $1 + T'(x) + T'(T(x))\sum^{N-1}_{k=2}T'(T^k(x)) = 0$. Since $T_{\sharp}\mu = \mu$, we have that $T' = 1$ or $T' = -1$ in $A$, which implies that $\sum^{N-1}_{k=2}T'(T^k(x)) = 0$ or $\sum^{N-1}_{k=2}T'(T^k(x)) = -2$, which in both cases are impossible.Then, the optimal transport map can not be differentiable. 
\end{proof}

In conclusion, also in the case of the repulsive harmonic cost, the picture is far from being clear: an interesting structure appears when $\lbrace {\mu_i}\rbrace_{i=1}^N$ is a flat $N$-tuple of measures but we still can't characterize this property. Moreover, in the flat case in which $\mu_i=\mu$, for example when $\mu= \frac 12 \mathcal{L}^1|_{[-1,1]}$, we have both diffuse optimal plan and a cyclical optimal map.

An interesting open problem is whether for any $N$-flat measure $\mu$, say absolutely continuous with respect to the Lebesgue measure, we have a cyclical optimal map and a diffuse plan.
% !TEX root = ../book.tex

\section{Multi-marginal OT for the Determinant}\label{sec:Det}

$\quad$ We are going to give a short overview of the main results in \cite{CarNa}, where Carlier and Nazaret consider the following optimal transport problems for the determinant:
\begin{equation}\label{Det:pb:MKDet}
(\MK_{Det}) \quad \sup_{\gamma \in \Pi((\R^d)^d,\mu_1,\dots,\mu_d)}
\ds \int_{(\R^d)^d}\ds \operatorname{det}(x_1,\dots,x_d) d\gamma(x_1,\dots,x_d)
\end{equation}
and
\begin{equation}\label{Det:pb:MKDetm}
(\MK_{\vert Det\vert}) \quad \sup_{\gamma \in \Pi((\R^d)^d,\mu_1,\dots,\mu_d)}
\ds \int_{(\R^d)^d}\ds \vert \operatorname{det}(x_1,\dots,x_d)\vert d\gamma(x_1,\dots,x_d), 
\end{equation}
where $\mu_1,\dots,\mu_d$ are the uniform probability measures in $\R^d$. In addition, in order to guarantee existence of a solution, we assume that there exist $p_1,\dots,p_d \in [1,\infty[$ such that
\[
\sum^d_{i=1}\dfrac{1}{p_i} = 1, \quad \text{and} \quad \sum^d_{i=1}\int_{\R^d} \dfrac{\vert x_i\vert^{p_i}}{p_i}d\mu(x_{i}) < +\infty.
\]

Notice that for this particular cost the problem \eqref{pb:MKN} makes sense only when $N=d$. In the following, we will focus on problem \eqref{Det:pb:MKDet} and exhibit explicit minimizers $\gamma$ in the radial case. Clearly, the difference between \eqref{Det:pb:MKDet} and \eqref{Det:pb:MKDetm} is that the second one admits positively and negatively oriented basis of vectors, while the first one ``chooses" only the positive ones. Moreover, if we assume that among  marginals $\mu_1,\dots,\mu_d$ there exist two symmetric probability measures $\mu_i,\mu_j, i\neq j, \ i.e. \ \mu_i = (\operatorname{-Id})_{\sharp}\mu_i$ and $\mu_j = (\operatorname{-Id})_{\sharp}\mu_j$, then any solution $\gamma$ of \eqref{Det:pb:MKDet} satisfies $\det(x_1,\dots,x_d) \geq 0  \ \gamma$-almost everywhere and so solves also \eqref{Det:pb:MKDetm} (Proposition 6, \cite{CarNa}).

Similarly to the Gangbo-\'Swi\c{e}ch cost \cite{GaSw}, the Monge-Kantorovich problem for the determinant \eqref{Det:pb:MKDet} can be seen as a natural extension of classical optimal transport problem with two marginals and so, it is equivalent to the $2$-marginals repulsive harmonic cost \eqref{OTHarm:pb:MKweak}. 

Indeed, we can write in the two marginals case, $\det(x_1,x_2) = \langle x_1,Rx_2 \rangle$, where $R:\R^2\to\R^2$ is the rotation of angle $-\pi/2$. Hence, since $\mu_1$ and $\mu_2$ have finite second moments, up to a change of variable $\tilde{x}_2 = Rx_2$, the problem $(\MK_{Det})$ in \eqref{Det:pb:MKDet} is equivalent to the classical Brenier's optimal transportation problem: 

\[
\begin{array}{ll}
\ds\ds\underset{\gamma \in \Pi(\mu_1,\mu_2)}{\operatorname{argmax}} \int_{\R^2}\det(x_1,x_2)d\gamma(x_1,x_2)
&=\ds\underset{\gamma \in \Pi(\mu_1,\mu_2)}{\operatorname{argmax}} \int_{\R^2} \langle x_1,\tilde{x}_2 \rangle d\gamma(x_1,x_2) \medskip \\ 
&=\ds\underset{\tilde{\gamma} \in \Pi(\mu_1,\tilde{\mu}_2)}{\operatorname{argmax}} \int_{\R^2} \langle x_1,x_2 \rangle d\tilde{\gamma} \medskip\\ 
&=\ds\underset{\gamma \in \Pi(\mu_1,\tilde{\mu}_2)}{\operatorname{argmin}} \int_{\R^2}\frac{\vert x_1-x_2 \vert^2}{2}d\gamma(x_1,x_2)- C \\  \end{array}
 \]
 where $C= 1/2(\int |x_1|^2\, d \mu_1+ \int |x_2|^2 \, d \mu_2) $ and $\tilde{\mu}_2=R_{\sharp} \mu_2$. \medskip

In the sequel, we are going to construct maximizers for \eqref{Det:pb:MKDet}, thanks to some properties of the Kantorovich potentials of the dual problem associated to \eqref{Det:pb:MKDet} (see theorem \ref{Det:carcgamma} bellow), 
\begin{equation}\label{pb:KNDet}
(\KN^{Det}) \quad \inf \Bigg\lbrace \int_{\R^d} \sum^d_{i=1}u_i(x_i)d\mu_i(x_i)
: \operatorname{det}(x_1,\dots,x_d) \leq \sum^N_{i=1}u_i(x_i) \Bigg\rbrace.
\end{equation}

In \cite{CarNa}, the authors provide a useful characterization of optimal transport plans through the potentials $u_i$, given by theorem \ref{Det:carcgamma} . In addition, by means of a standard convexification trick we obtain regularity results on the Kantorovich potentials.

\begin{teo}\label{Det:carcgamma}
A coupling $\gamma \in \Pi((\R^d)^d, \mu_1,\dots,\mu_d)$ is optimal in \eqref{Det:pb:MKDet} if and only if there exists
lower semi-continuous convex functions $u_i:\R^d\to \R\cup\lbrace \infty\rbrace$ such that for all $i \in \lbrace 1, \dots, d \rbrace$,

\[
\sum^d_{i=1} u_i(x_i) \leq \sum^d_{i=1} u^*_i((-1)^{i+1}\bigwedge_{i\neq j} x_j), \quad \textit{ on } \quad (\R^d)^d; 
\]
\[
\sum^d_{i=1} u_i(x_i) = \sum^d_{i=1} u^*_i((-1)^{i+1}\bigwedge_{i\neq j} x_j), \quad \gamma-\textit{almost everywhere};
\]
\[
(-1)^{i+1}\bigwedge_{i\neq j} x_j \in \partial u_i(x_i), \quad \gamma-\textit{almost everywhere}.
\]
where $\bigwedge^d_{i=1} x_j$ denotes the wedge product and, for every $i$, $u^*_i$ is the convex dual of the Kantorovich potential $u_i$.
\end{teo}

Now, the main idea is to use the geometrical constraints on the Kantorovich potentials $u_i$ \eqref{pb:KNDet}, given by the theorem \eqref{Det:carcgamma}, in order to construct an explicit solution. 

We illustrate the theorem \eqref{Det:carcgamma} and explain how to construct a particular optimal $\gamma$ for \eqref{Det:pb:MKDet} by an example in the three marginals case. Let $\mu_i = \rho_i\mathcal{L}^3, \ i=1,2,3$ radially symmetric  probability measures on the $3$-dimensional ball $B$. 

In this particular situation, the optimizers of \eqref{Det:pb:MKDet} and \eqref{Det:pb:MKDetm} have a natural geometric interpretation: what is the best way to place three random vectors $x,y,z$, distributed by probability measures $\mu_1,\mu_2,\mu_3$ on the sphere, such that the simplex generated by those three vectors $(x,y,z)$ has maximum average volume? \medskip

Suppose $\gamma \in \Pi(B, \mu_1,\mu_2,\mu_3)$ optimal in \eqref{Det:pb:MKDet} when $d=3$. From optimality of $\gamma$, we have
\[
u_1(x) + u_2(y) + u_3(z) = det(x,y,z), \quad \gamma-\textit{almost everywhere}.
\]
Applying the theorem \eqref{Det:carcgamma}, we get
\[
\Bigg\lbrace \begin{array}{c} u_2(y) + u_3(z) = u^*_1(y\wedge z)  \\
u_1(x) + u_3(z) = u^*_2(-x\wedge z)  \\
u_1(x) + u_2(y) = u^*_3(x\wedge y)   \end{array}, \quad \gamma-\textit{almost everywhere},
\]
and,
\begin{equation}
\label{sys:perp}
\Bigg\lbrace \begin{array}{c} \nabla u_1(x) = y\wedge z  \\
\nabla u_2(y) = -x\wedge z  \\
\nabla u_3(z) = x\wedge y   \end{array}, \quad \gamma-\textit{almost everywhere}. \medskip
\end{equation}

It follows from \eqref{sys:perp}, given a vector $x$ on in the ball, the conditional probability of $y$ given $x$ is supported in a \textit{``meridian"} $M(x)$
\[
M(x) = \lbrace y \in S^2 :  \langle \nabla u_1 (x),y \rangle = 0 \rbrace,
\]
where $S^2$ is the $2-$sphere. Finally, assuming that $\langle x, \nabla u_1(x) \rangle \neq 0$, the conditional probability of $z$ given the pair $(x,y)$ is simply given by a delta function on $z$
\[
z = \dfrac{\nabla u_1(x)\wedge \nabla u_2(y)}{\langle x, \nabla u_1(x) \rangle}; \medskip
\]
In particular, we have
\[
\langle x,\nabla u_1(x) \rangle + \langle y,\nabla u_2(y) \rangle + \langle z,\nabla u_3(z) \rangle = \det(x,y,z) = \det(\nabla u_1(x),\nabla u_2(y),\nabla u_3(z)). \medskip
\]

\begin{exam}[An explicit solution in the ball  $B \subset \R^3$] Suppose $\mu_i = \mathcal{L}^3_{B}, \ i=1,2,3$, the $3$-dimensional Lebesgue measure in the ball $B\subset \R^3$. The following coupling $\gamma^*$
\be\label{Det:examplegamma}
\int_{B^3} fd\gamma^* = \dfrac{1}{\mathcal{L}^3(B)}\int_B\bigg(\int_{M(x)}f(x,\vert x\vert y, x\wedge y)\frac{d\mathcal{H}^1(y)}{2\pi}\bigg)dx, \ \forall f \in C(B^3,\R). \bigskip
\ee
is an optimizer for \eqref{Det:pb:MKDet} with $d=3$. Indeed, from we can show explicit potentials $u_1^*(x) = u_2^*(x) = u_3^*(x) = \vert x\vert^3/3$; clearly we have 
\[
\det(x,y,z) \leq \vert x\vert\vert y \vert\vert z\vert \leq \vert x\vert^3/3 + \vert y\vert^3/3 + \vert z\vert^3/3, \quad \forall \ (x,y,z) \in B,
\]
with equality when $|x|=|y|=|z|$ and $x,y,z$ are orthogonal. Since $\gamma^*$ is concentrated on this kind of triples of vector we have the optimality. Finally, by a suitable change of variable, it is easy to see that $\gamma^* \in \Pi_3(\mathcal{L}^3_B)$.
\end{exam}

\emph{Some comments on the radially symmetric $d$-marginals case:} In \cite{CarNa}, for $d$ radially symmetric probability measures, the authors exhibit explicit optimal couplings $\gamma^*$. In their proof, two aspects were crucial: the first one is remark that if $\lbrace \mu_i\rbrace^d_{i=1}$ are radially symmetric measures in $\R^d$, then the optimal Kantorovich potentials $u_i(x_i) = u_i(\vert x_i \vert)$ are also radially symmetric; in particular the system \eqref{sys:perp} for general $d$ implies that the support of $\gamma^*$ is contained in the set of an orthogonal basis.
The second observation is to notice that in the support of $\gamma^*$ we have $H_i(|x_1|)=|x_i|$, where $H_i$ is the unique monotone increasing map such that $(H^*_i)_{\sharp} \mu_1=\mu_i$, where $H^*_i(x)=\frac x{|x|} H_i(|x|)$. This is done analyzing the correspondent radial problem (with cost $c(r_1, \ldots, r_d) =  r_1 \cdots r_d$), using the optimality condition $ \phi_i'(r_i) = \partial_i c$ and the fact that in this case $r_i \phi_i'(r_i) = r_1 \phi_1'(r_1) =c \geq 0$; then, exploiting the convexity of $\phi_i$ we get $r_i=H_i (r_1)$ for some increasing function $H_i$, that is uniquely determined. \medskip
% The second one, is notice that the support of an optimal coupling $\gamma^*$ should contains the set of orthogonal system of vectors $(x_1,\dots,x_d) \in (\R^{d})^d$. 
%
%In fact, we can prove that if $\tilde{\gamma} \in \Pi((\R^{d})^d,\mu_1,\dots,\mu_d)$ there exists Kantorovich potentials $(u_1,\dots,u_d)$  such that 
%\[
%\sum^d_{i=1} u_i(x_i) = \det(x_1,\dots,x_d), \quad \tilde{\gamma}-\textit{almost everywhere},
%\]
%and then the support of $\tilde{\gamma}$ is  the set of $(x_1,\dots,x_d) \in (\R^{d})^d$ such that $\mu_1$-almost everywhere $(x_1,\dots,x_d)$ is a orthogonal basis of $(\R^{d})^d$ and $H_i(\vert x_1\vert) = \vert x_i \vert$, where $H_i$ is the monotone rearrangement map from $\mu_1$ to $\mu_i$, \ $i=1,\dots,d$. 

\emph{Existence of Monge-type solutions:} In the $3$-marginals case in the unit ball, by construction of the the coupling $\gamma^*$ in \eqref{Det:examplegamma} or, more generally, the optimal coupling in the $d$-marginal case (see theorem 4 in \cite{CarNa}), we can see that their support are not concentrated in the graph of cyclic maps $T,T^2,\dots,T^{d-1}$ or simply on the graph of maps $T_1,\dots, T_{d-1}$ as we could expect from corollary \eqref{cor:CoDMa}. In other words, $\gamma^*$ in \eqref{Det:examplegamma} is not Monge-type solution. 

The existence of Monge type solutions for the determinant cost is still an open problem for odd number of marginals. From the geometric conditions we discussed above, in the case in which $\mu_i=\mu$ a radial measure, if Monge solutions exists then, for every $x \in \R^d$, $(x,T_1(x),\dots,T_{d-1}(x))$ should be an orthogonal basis, and $|T_i(x)| = \vert x\vert \ , \ i=1,\dots,d-1$.

For the interesting even dimensional case, we can observe a similar phenomena remarked in the repulsive harmonic costs, concerning the existence of trivial even dimensional solutions for the Monge problem in \eqref{Det:pb:MKDet}. \textbf{We expect the existence of non-regular optimal transport map also to this case.}

\begin{exam}[Carlier \& Nazaret, \cite{CarNa}]
\textit{The even dimensional phenomena:} As in the repulsive harmonic cost, it is easy to construct Monge minimizers for the determinant cost for even number of marginals $\geq 4$. For instance, suppose $c(x_1,x_2,x_3,x_4) = \det(x_1,x_2,x_3,x_4)$, define transport maps $T_1,T_2,T_3:B\to\R^4$ by, for $x = (x_1,x_2,x_3,x_4) \in B$
$$
T_1(x) = \left( \begin{array}{c}
-x_2  \\
x_1 \\
-x_4 \\
x_3  \end{array} \right), \quad \quad T_2(x) = \left( \begin{array}{c}
-x_3  \\
x_4 \\
x_1 \\
-x_2  \end{array} \right), \quad \quad T_3(x) = \left( \begin{array}{c}
-x_4  \\
-x_3 \\
x_2 \\
x_1  \end{array} \right).
$$
We can see that $\gamma_T = (Id,T_1,T_2,T_3)_{\sharp}\gamma$ is a Monge-type optimal transport plan for \eqref{Det:pb:MKDet} and \eqref{Det:pb:MKDetm}.
\end{exam}

% !TEX root = ../book.tex

\section{Numerics}
\label{numerics}
Numerics for the multi-marginal problems have so far not been extensively developed.
Discretizing the multi-marginal problem leads to a linear program where the number of constraints grows exponentially in the number of
marginals.
In \cite{CarObeOud} Carlier, Oberman and Oudet studied the matching of teams problem  and they were able to reformulate the problem
as a linear program whose number of constraints grows only linearly in the number of marginals.
More rencently a numerical method based on an entropic regularization has been developed and it has been applied to various optimal transport problem 
in \cite{Ben,BenCaNen,Cut}.
Let us mention that the Coulomb cost has been treated numerically in various paper as
in \cite{Malet2012,GorSeiVig}, where the solutions are based on the analytical form of transport maps given in \cite{GorSeiVig},
in \cite{Mendl2013}, where parameterized
functional forms of the Kantorovich potential are used, in \cite{ChenFriMendl,ChenFri}, where a linear programming approach has been developed to solve the 1-dimensional problem, and, as already mentioned, in \cite{BenCaNen}.
In this section we focus on the regularized method proposed in \cite{Ben,BenCaNen,Cut} and we, finally, present some numerical experiments
for the costs studied above.
\subsection{The regularized problem and the Iterative Proportional Fitting Procedure}
\label{subIPFP}
Let us consider the problem  

\be\label{pb:MKN2}
(\MK) \quad \inf_{ \gamma \in \Pi(\R^{dN},\mu_1,\dots,\mu_N)}
\int_{\R^{dN}}c(x_1,\dots,x_N)\gamma(x_1,\dots,x_N)dx_1,\dots,dx_N,
\ee

where $N$ is the number of marginals $\mu_i$ which are probability distributions over $\mathbb{R}^d$, $c(x_{1},...,x_{N})$ is the cost function, $\gamma$ the coupling,  is the probability 
distribution over   $\mathbb{R}^{dN}$ %and 
%$\Pi:=\bigcap_{i=1}^{N}\mathcal{C}_{i}$ with $\mathcal{C}_{i}:=\{ \gamma \in \P(\mathbb{R}^{dN}) : \, e_{i\sharp}\gamma =\mu_{i} \}$.
\begin{oss}
From now on the marginals $\mu_i$ and the coupling $\gamma$ are densities and when the optimal coupling $\gamma$ is induced by maps $T_i$, we write it 
as $\gamma=\mu_1(x_1)\delta(x_2-T_2(x_1))\cdots\delta(x_N-T_N(x_1))$.
\end{oss}

%In this section we introduce the discrete problem associated to (\ref{eq13}) and the entropic regularization which let us re-cast the given problem as the minimization of the 
%Kullback-Leibler distance.
In order to discretize (\ref{pb:MKN2}), we use a discretisation with $M_d$ points 
of the support of the $k$th marginal as $\{x_{j_k}\}_{j_k=1,\cdots,M_d}$. If the densities  
 $\mu_k$ are  approximated  by $\mu_k\approx\sum_{j_k} \mu_{j_k} \delta_{x_{j_k}}$, we get
\be
\label{pb:MKdiscrete}
 \min_{\gamma\in\mathcal{\Pi_h}}\sum_{j_1,\cdots j_{N}} c_{j_1,\cdots,j_N}\gamma_{j_1,\cdots,j_N},
\ee
where $\Pi_k$ is the discretization of $\Pi$,    $c_{j_1,\cdots,j_N} = c(x_{j_{1}},\cdots,x_{j_{N}})$  and   the coupling support for each coordinate is restricted to the points  
$\{x_{j_k}\}_{j_k=1,\cdots,M_d}$ thus becoming a $(M_d)^N$ matrix again denoted $\gamma$ with elements 
 $\gamma_{j_1,\cdots,j_N}$. The marginal constraints  $\mathcal{C}_{i}$ (such that $\Pi_k=\bigcap_{k=1}^{N}\mathcal{C}_{k}$) becomes 
  \begin{equation}
\label{constraint}
 \mathcal{C}_{k}:=\Big\lbrace \gamma \in\mathbb{R}_+^{  (M_d)^N } : \quad\sum_{j_{1},...,j_{k-1},j_{k+1},...,j_{N}}\gamma_{j_{1},...,j_{N}}=\mu_{j_k} ,  \,  \forall j_k = 1,\cdots,M_d  \Big\rbrace.
\end{equation}
As in the continous framework the problem (\ref{pb:MKN2}) admits a dual formulation
\begin{equation}
 \label{DualDiscrete}
 \begin{split}
  \max_{u_{j_k}}&\sum_{k=1}^{N}\sum_{j=1}^M u_{j_k}\mu_{j_k}\\
  s.t. & \sum_{k=1}^{N}u_{j_k} \leq c_{j_{1}\cdots j_{N}}\quad\forall \ j_{k}=1,\cdots,M_d,
 \end{split}
 \end{equation}
where $u_{j_k}=u_k(x_{j_k})$ is the $k$th Kantorovich potential.
One can notice that  the primal  (\ref{pb:MKN2})  has $(M_d)^N$ unknown and $M_d\times N$ linear constraints and
the dual problem (\ref{DualDiscrete}) has $M_d\times N$ unknown, but $(M_d)^N$ constraints. 
This actually makes the problems computationally unsolvable with standard linear programming methods even for small cases.
\begin{oss}
 We underline that in many applications we have presented (as in DFT) the marginals $\mu_{j_k}$ are equal ($\mu_{j_k}=\mu_j$, $\forall \ k\in\{1,\cdots,N\}$). Thus the dual problem
 can be re-written in a more convenient way (but still computationally unfeasible for many marginals) 
 \begin{equation}
 \label{DualDiscreteBis}
 \begin{split}
  \max_{u_{j}}&\sum_{j=1}^M Nu_{j}\mu_j\\
  s.t. & \sum_{k=1}^{N}u_{j_k} \leq c_{j_{1}\cdots j_{N}}\quad\forall j_{k}=1,\cdots,M_d,
 \end{split}
 \end{equation}
where $u_{j}=u_{j_k}=u(x_{j_k})$. Now the dual problem has $M_d$ unknown, but $(M_d)^N$ linear constraints.
\end{oss}

A different approach consists in computing the problem (\ref{pb:MKN2}) regularized by the entropy of the joint coupling. 
This regularization dates to E. Schr\"{o}dinger \cite{Schrodinger31} and, as mentioned above, it has been recently introduced in many applications
involving optimal transport \cite{Ben, BenCaNen,Cut, GalichonEconomics}.
Thus, we consider the following discrete regularized problem
\begin{equation}
 \min_{\gamma\in\mathcal{C}}\sum_{j_1,\cdots j_{N}}c_{j_1,\cdots,j_N}\gamma_{j_1,\cdots,j_N}+\epsilon E(\gamma)
\end{equation}
where  $E(\gamma)$ is defined as follows
\begin{equation}
 E(\gamma)=\begin{cases} \sum_{j_1,\cdots j_{N}}\gamma_{j_1,\cdots,j_N}\log(\gamma_{j_1,\cdots,j_N})\mbox{ if }  \gamma\geq 0 \\ +\infty \mbox{ otherwise},\end{cases}
\end{equation}
 and $\mathcal{C}$ is the intersection of the set associated to the marginal constraints (we remark that the entropy is a penalization of the non-negative contraint on $\gamma$).
After elementary computations, we can re-write the problem as
\begin{equation}
 \label{eq22}
 \min_{\gamma\in \mathcal{C}}KL(\gamma\lvert\bar{\gamma})
\end{equation}
where $KL(\gamma\lvert\bar{\gamma})=\sum_{i_{1},...,i_{N}}\gamma_{i_{1},...,i_{N}}\log(\dfrac{\gamma_{i_{1},...,i_{N}}}{\bar{\gamma}_{i_{1},...,i{N}}})$ is the
Kullback-Leibler distance and 
\begin{equation} 
\label{barg} 
\bar{\gamma}_{i_{1},...,i_{N}}=e^{-\dfrac{c_{j_1,\cdots,j_N}}{\epsilon}}.
\end{equation} 

As  explained in section \ref{sec:DFTOT}, when the transport plan $\gamma$ is  concentrated  on the graph of a transport map which solves the 
Monge problem,  after discretisation of the densities, this property is lost along but we still expect the matrix $\gamma$ to be  sparse.
The entropic regularization spreads the  support and this helps to stabilize the computation as
it defines a strongly convex program with a unique solution $\gamma^{\epsilon}$.
Moreover the solution $\gamma^{\epsilon}$ can be obtained through elementary operations. 
The regularized solutions $\gamma^{\epsilon}$ then converge to $\gamma^\star$  (see figure \ref{figure:regularization}), the solution of (\ref{pb:MKN2}) with minimal entropy, 
as $\epsilon\rightarrow 0$ (see \cite{CominettiAsympt} for a detailed asymptotic analysis and the proof of exponential convergence). 

\begin{figure}[htbp]

\begin{tabular}{@{}c@{\hspace{1mm}}c@{\hspace{1mm}}c@{}}

\centering
\includegraphics[ scale=0.12]{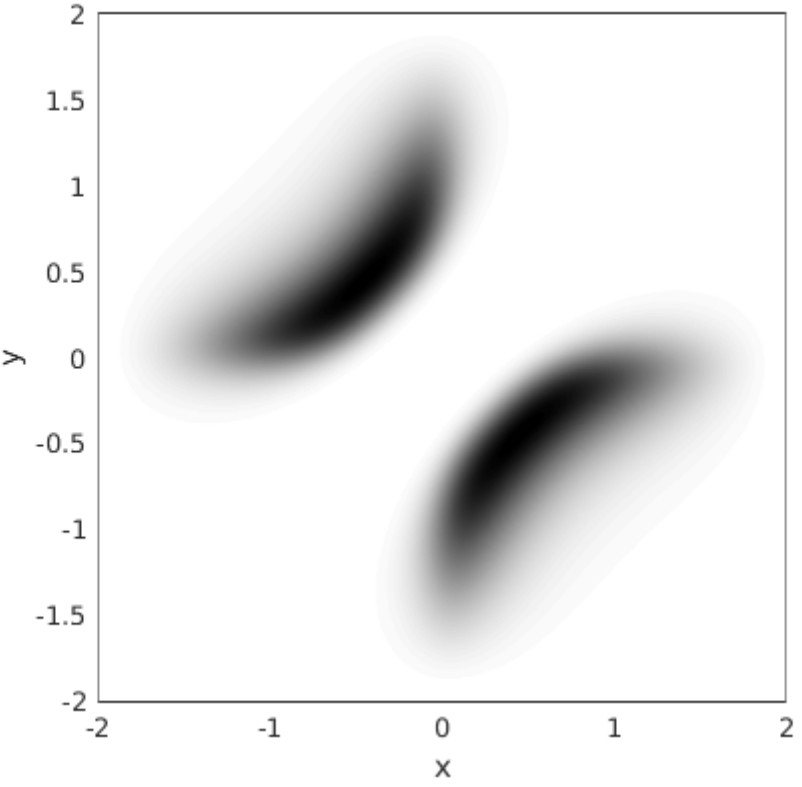}&
\includegraphics[ scale=0.12]{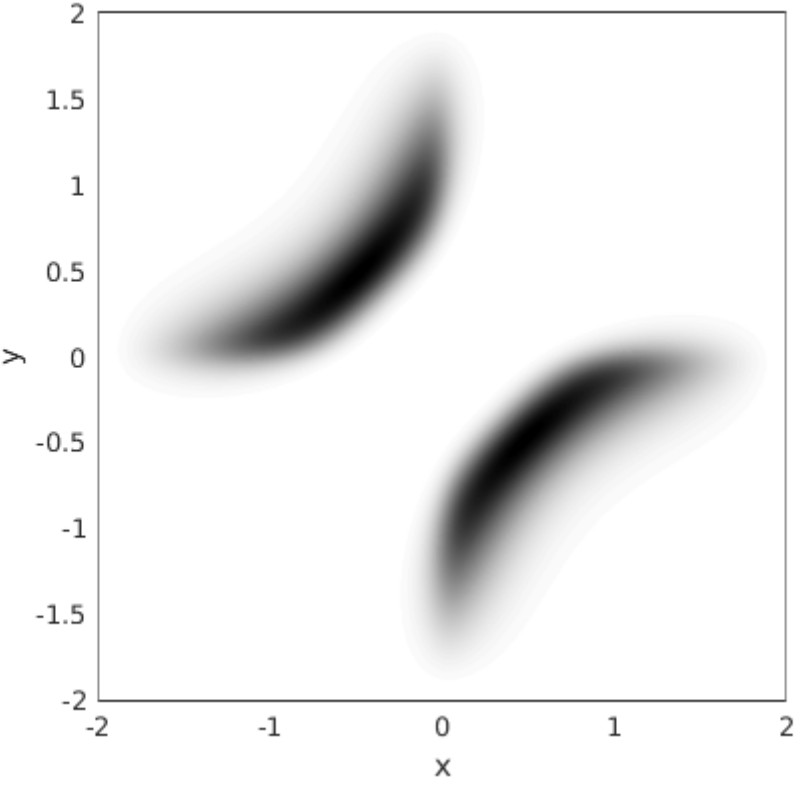}&
\includegraphics[ scale=0.123]{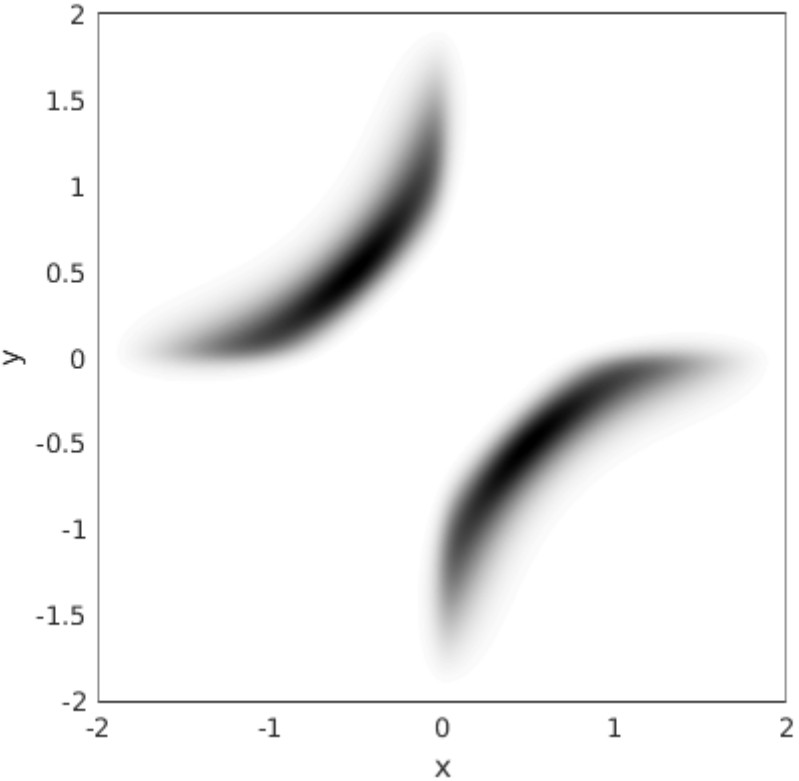}\\
$\epsilon=0.2$ &$\epsilon=0.1$&$\epsilon=0.05$\\  
\includegraphics[ scale=0.12]{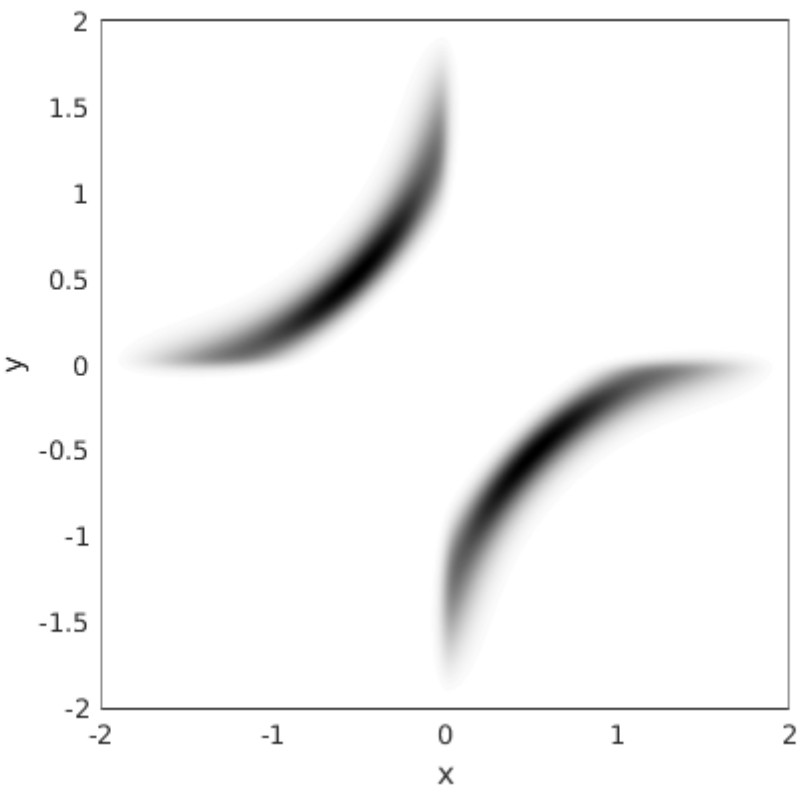}&
\includegraphics[ scale=0.12]{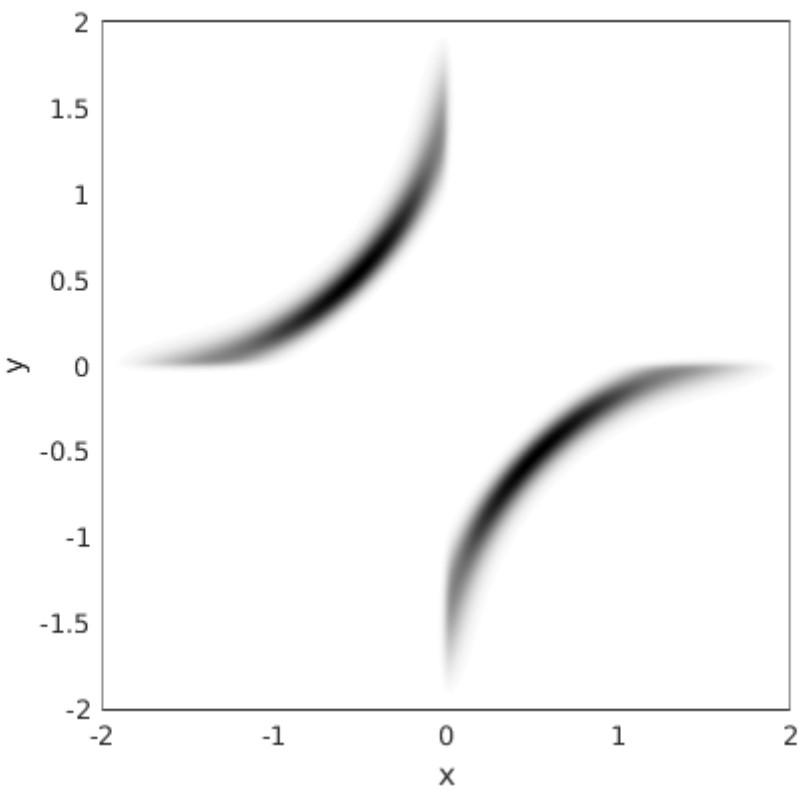}&
\includegraphics[ scale=0.12]{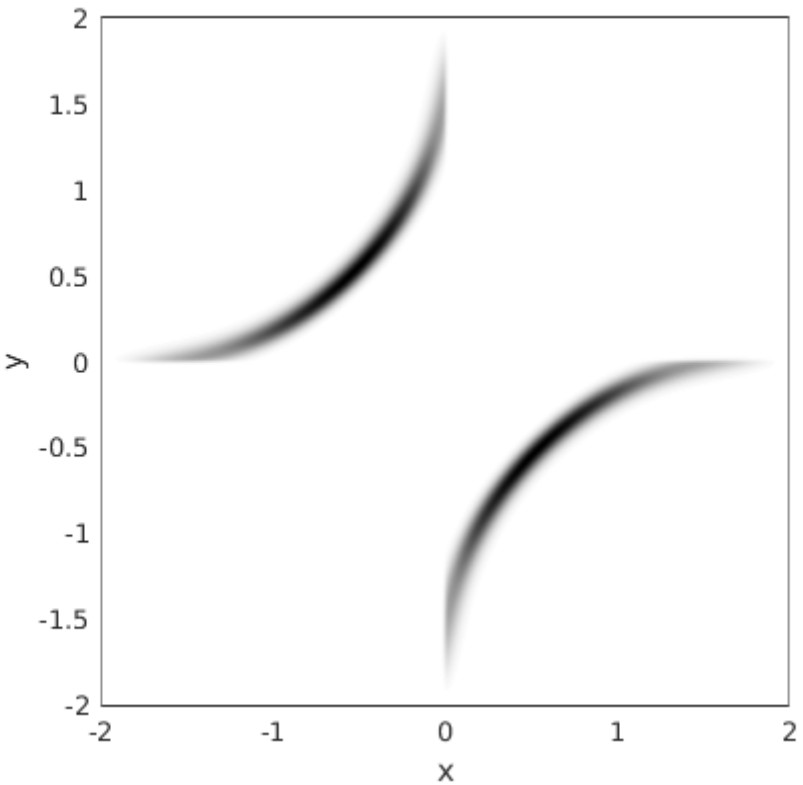}\\
$\epsilon=0.025$ &$\epsilon=0.0125$&$\epsilon=0.006$\\ 
\end{tabular}
\caption{\textit{Support of the coupling $\gamma^\epsilon$ for the Coulomb cost and $\mu_{1}=\mu_{2}=(1+cos(\pi x/2))/2$. The simulation has been performed
on a discretization of $[-2,2]$ with $M_d=1000$.}}
\label{figure:regularization}
\end{figure}

In order to introduce the Iterative Proportional Fitting Procedure (IPFP), we consider the two marginals problem 
\begin{equation}
\label{pd:MKdiscrete2}
 \min_{\gamma\in\mathcal{C}}\sum_{i,j} c_{ij}\gamma_{ij}.
\end{equation}
The aim of the IPFP is to find the KL projection of $\bar{\gamma}$ on the set $\mathcal{C}=\{\gamma_{ij}\in\mathbb{R}^{M_d}\times\mathbb{R}^{M_d} \ : \ \sum_{j}\gamma_{ij}=\mu_{i}\}\cap \{\gamma_{ij}\in\mathbb{R}^{M_d}\times\mathbb{R}^{M_d} \ : \ \sum_{i}\gamma_{ij}=\nu_{j}\}$.
By writing down the lagrangian associated to (\ref{pd:MKdiscrete2}) and computing the optimality condition, we find that $\gamma_{ij}$ can be written
as
\begin{equation}
 \gamma_{ij}=a_ib_j\bar{\gamma}_{ij}\quad with \quad a_i=e^{u_{i}/\epsilon},\quad b_j=e^{v_{j}/\epsilon},
\end{equation}
where $u_{i}$ and $v_{j}$ are the regularized Kantorovich potential.
Then, $a_i$ and $b_j$ can be uniquely determined by the marginal constraint
\begin{equation}
 a_{i}=\dfrac{\mu_{i}}{\sum_{j}b_{j}\bar{\gamma}_{ij}},\quad  b_{j}=\dfrac{\nu_{j}}{\sum_{i}a_{i}\bar{\gamma}_{ij}}.
\end{equation}
Thus, we can now define the following iterative method
\begin{equation}
 \label{IPFP}
 b_{j}^{n+1}=\dfrac{\nu_{j}}{\sum_{i}a_{i}^{n}\bar{\gamma}_{ij}},\quad a_{i}^{n+1}=\dfrac{\mu_{i}}{\sum_{j}b_{j}^{n+1}\bar{\gamma}_{ij}}.
\end{equation}
\begin{oss}
 In \cite{Rusch95} R\"{u}schendorf proves that the iterative method (\ref{IPFP}) converges to the KL-projection of $\bar{\gamma}$ on $\mathcal{C}$.
\end{oss}

\begin{oss}
R\"{u}schendorf and  Thomsen (see \cite{RuschendorfThomsen}) proved, in the continous measure framework, that a unique KL-projection exists and takes the form
$\gamma(x,y)=a(x)\otimes b(y)\bar{\gamma}(x,y)$ (where $a(x)$ and $b(y)$ are non-negative functions).
\end{oss}
The extension to the multi-marginal framework  is straightforward but 
cumbersone to write. 
It leads to a problem set on $N$  $M_d$-dimensional vectors $a_{j,i_{(\cdot)}},\quad \, j = 1,\cdots,N, \quad i_{(\cdot)}= 1,\cdots,M_d $. Each update takes the form 
%\begin{equation}
%\label{ip3} 
$$  a_{j,i_{j}}^{n+1}  =\dfrac{\rho_{i_{j}}}{\sum_{i_1,i_2,... i_{j-1},i_{j+1},...,i_N   }\bar{\gamma}_{i_1,...,i_N} \,
  a_{1,i_1}^{n+1}   \,    a_{2,i_2}^{n+1}  ...   a_{j-1,i_{j-1}}^{n+1}   \,   a_{j+1,i_{j+1}}^{n}   ...   a_{N,{i_N}}^{n}   \,         }. \bigskip $$
%\end{equation} 
\begin{exam}[IPFP and 3 marginals]
In order to clarify the extension of the IPFP to the multi-marginal case , we consider 3 marginals and we write down the updates of the algorithm
$$a_{1,i_{1}}^{n+1}=\dfrac{\rho_{i_{1}}}{\sum_{i_2,i_3}\bar{\gamma}_{i_1,i_2,i_3} \, a_{2,i_2}^{n}   \,    a_{3,i_3}^{n}}, \bigskip$$
$$a_{2,i_{2}}^{n+1}=\dfrac{\rho_{i_{2}}}{\sum_{i_1,i_3}\bar{\gamma}_{i_1,i_2,i_3} \, a_{1,i_1}^{n+1}   \,    a_{3,i_3}^{n}}, \bigskip$$
$$a_{3,i_{3}}^{n+1}=\dfrac{\rho_{i_{3}}}{\sum_{i_1,i_2}\bar{\gamma}_{i_1,i_2,i_3} \, a_{1,i_1}^{n+1}   \,    a_{2,i_2}^{n+1}}. \bigskip$$
\end{exam}
In the following sections we present some numerical results obtained by using the IPFP.

\subsection{Numerical experiments: Coulomb cost}

We present now some results for the multi-marginal problem with Coulomb cost in the real line.
We consider the case where the $N$ marginals are equal to a density $\mu$ (we work in a \textit{DFT} framework so the marginals rapresent the electrons which are indistinguishable).
We recall that if we split  $\mu$ into  $N$ $\tilde{\mu}_{i}$ with equal mass ($\int\tilde{\mu}_{i}(x)dx=\frac{1}{N}\int\mu(x)dx$), then we expect an optimal plan induced by a cyclical map
such that  $T_\sharp\tilde{\mu}_{i}=\tilde{\mu}_{i+1}$ $i=1,\cdots,N-1$ and
$T_\sharp\tilde{\mu}_{N}=\tilde{\mu}_{1}$.

The simulations in figure \ref{figure:coulombNcos} are all performed on a discretization of $[-5,5]$ with $M_d=200$,  with marginals $\mu_i=\mu(x)=\frac{N}{10}(1+cos(\frac{\pi}{5}x))\quad i=1,\cdots,N$ and $\epsilon=0.02$.
If we focus on the support of the coupling $\tilde{\gamma}_{12}(x,y)=(e_1,e_2)\gamma(x,y,z)$  we can notice that the numerical solution correctly reproduces the prescribed behavior: the transport plan is induced by a cyclical optimal
map (see section \ref{sec:Coulomb}). We refer the reader to \cite{BenCaNen} and \cite{DMaGGNe} for examples in higher dimension.

\begin{figure}[htbp]

\begin{tabular}{@{}c@{\hspace{4mm}}c@{}}

\centering
\includegraphics[ scale=0.25]{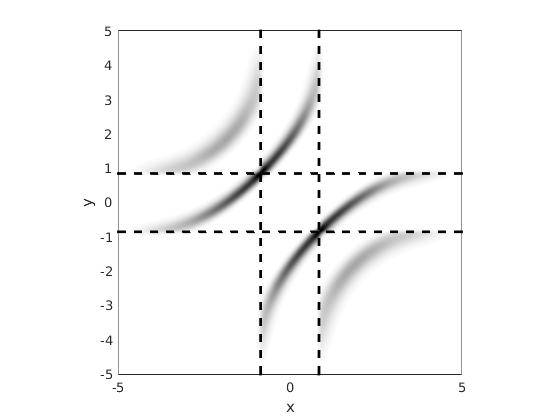}&
\includegraphics[ scale=0.25]{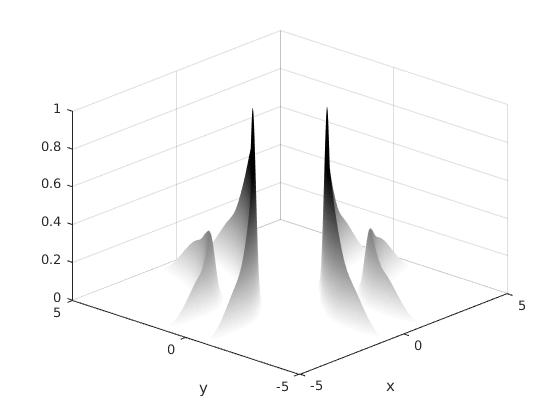}\\
\includegraphics[ scale=0.25]{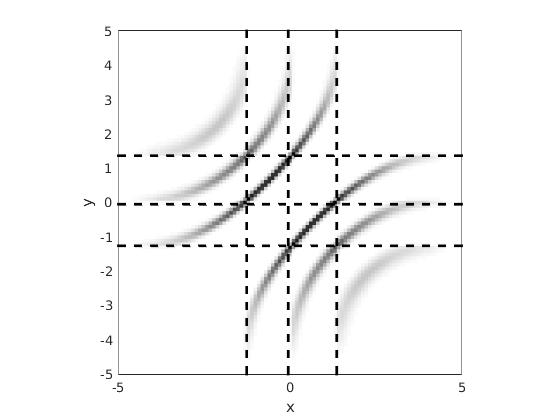}&
\includegraphics[ scale=0.25]{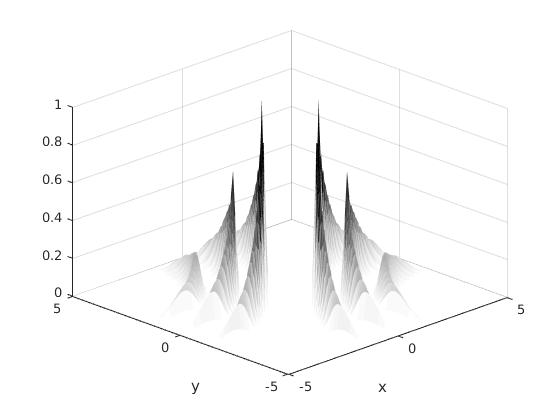}\\
\includegraphics[ scale=0.25]{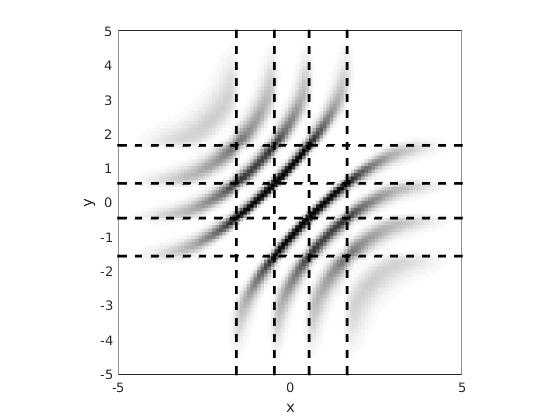}&
\includegraphics[ scale=0.25]{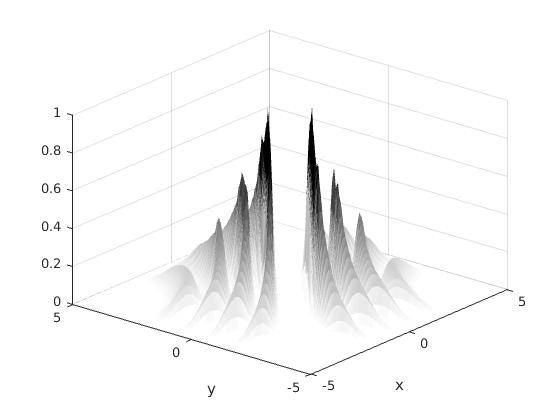}\\
\end{tabular}
\caption{\textit{On the Left: support of the optimal coupling $\tilde{\gamma}_{12}(x,y)$, on the Right: graph of the optimal coupling $\tilde{\gamma}_{12}(x,y)$
, in the cases $N=3$ (1st Row), $N=4$ (2nd Row), $N=5$ (3rd Row). The dashed lines delimit the supports of $\tilde{\mu}_{i}$ , with $i =1,\cdots,N$. }}

%\caption{\textit{(1st Row) Left: support of the optimal coupling $\tilde{\gamma}_{12}(x,y)$ for $N=3$. Right: support of the optimal coupling $\tilde{\gamma}_{12}(x,y)$ for $N=3$
%(2nd Row) Left: support of the optimal coupling $\tilde{\gamma}_{12}(x,y)$ for $N=4$. Right: optimal coupling $\tilde{\gamma}_{12}(x,y)$ for $N=4$.
%(3rd Row) Left: support of the optimal coupling $\tilde{\gamma}_{12}(x,y)$ for $N=5$. Right: optimal coupling $\tilde{\gamma}_{12}(x,y)$ for $N=5$.
%The dashed lines delimit the intervals where $\tilde{\mu}_{i}$ , with $i =1,\cdots,N$, are defined.}}
\label{figure:coulombNcos}
\end{figure}
\begin{oss}
 Theorem \ref{teo:1DN} actually works also for other costs functions as $c(x_{1},\cdots,x_{N})=\sum_{i<j}^{N} -log(\lvert x_i - x_j\rvert)$. This means that if we use the density
 $\mu(x)=\frac{N}{10}(1+cos(\frac{\pi}{5}x))$, we expect to obtain the same solution as for the Coulomb cost. Thus if we now consider the 3-marginals case and the discretized cost 
 $c_{j_1,j_2,j_3}=-log(\lvert x_{j_1}-x_{j_2}\rvert)-log(\lvert x_{j_1}-x_{j_3}\rvert)-log(\lvert x_{j_2}-x_{j_3}\rvert)$, we can notice (see figure \ref{figure:logNcos}) that we recover the same optimal coupling as the corresponding
 case for Coulomb (first row of figure \ref{figure:coulombNcos}). The simulation is performed on a discretizetion $[-5,5]$ with $M_d=200$,  with marginals $\mu(x)=\frac{N}{10}(1+cos(\frac{\pi}{5}x))\quad N=3$ and $\epsilon=0.01$.
\end{oss}
 \begin{figure}[htbp]

\begin{tabular}{@{}c@{\hspace{4mm}}c@{}}

\centering
\includegraphics[ scale=0.25]{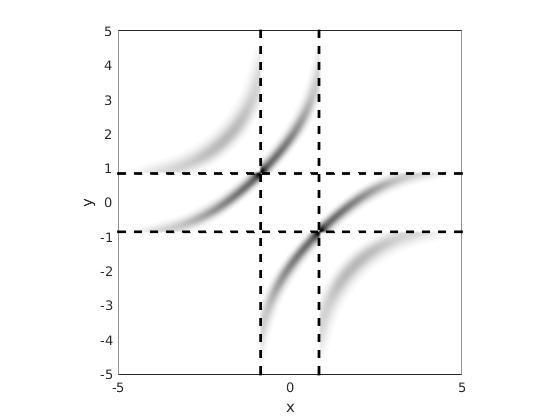}&
\includegraphics[ scale=0.25]{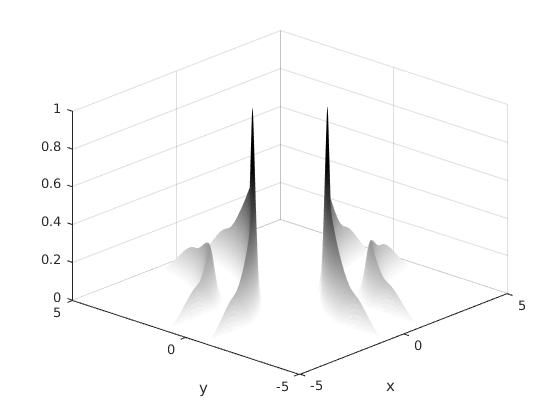}\\

\end{tabular}
\caption{\textit{(logarithmic cost) Left: support of the optimal coupling $\tilde{\gamma}_{12}(x,y)$ for $N=3$. Right: support of the optimal coupling $\tilde{\gamma}_{12}(x,y)$ for $N=3$.
The dashed lines delimit the intervals where $\tilde{\mu}_{i}$ , with $i =1,\cdots,3$, are defined.}}
\label{figure:logNcos}
\end{figure}

\subsection{Numerical experiments: repulsive Harmonic cost}
As shown in section \ref{sec:OTHarm} the minimizers of the OT problem with the harmonic cost are also minimizers of the problem with $c(x_{1},\cdots,x_{N})=\lvert x_{1}+\cdots+x_{N}\rvert^{2}$,
so for the discrete problem (\ref{pb:MKdiscrete}) we take $c_{j_{1}\cdots j_{N}}=\lvert x_{j_{1}}+\cdots+x_{j_{N}}\rvert^{2}$.
Let us first consider the two marginal case and the uniform density on $[0,1]$ (see example \ref{OTHarm:Exam2Marginals}), and, as expected, we find a deterministic coupling
given by $\gamma(x,y)=\mu(x)\delta(y-T(x))$ where $T(x)=1-x$, see figure \ref{figure:harmonic2marginals}.
The simulation in figure \ref{figure:harmonic2marginals} has been performed on a discretization of $[0,1]$ with $M_d=1000$ gridpoints
and $\epsilon=0.005$.

\begin{figure}[htbp]

\begin{tabular}{@{}c@{\hspace{3mm}}c@{}}

\centering
\includegraphics[ scale=0.10]{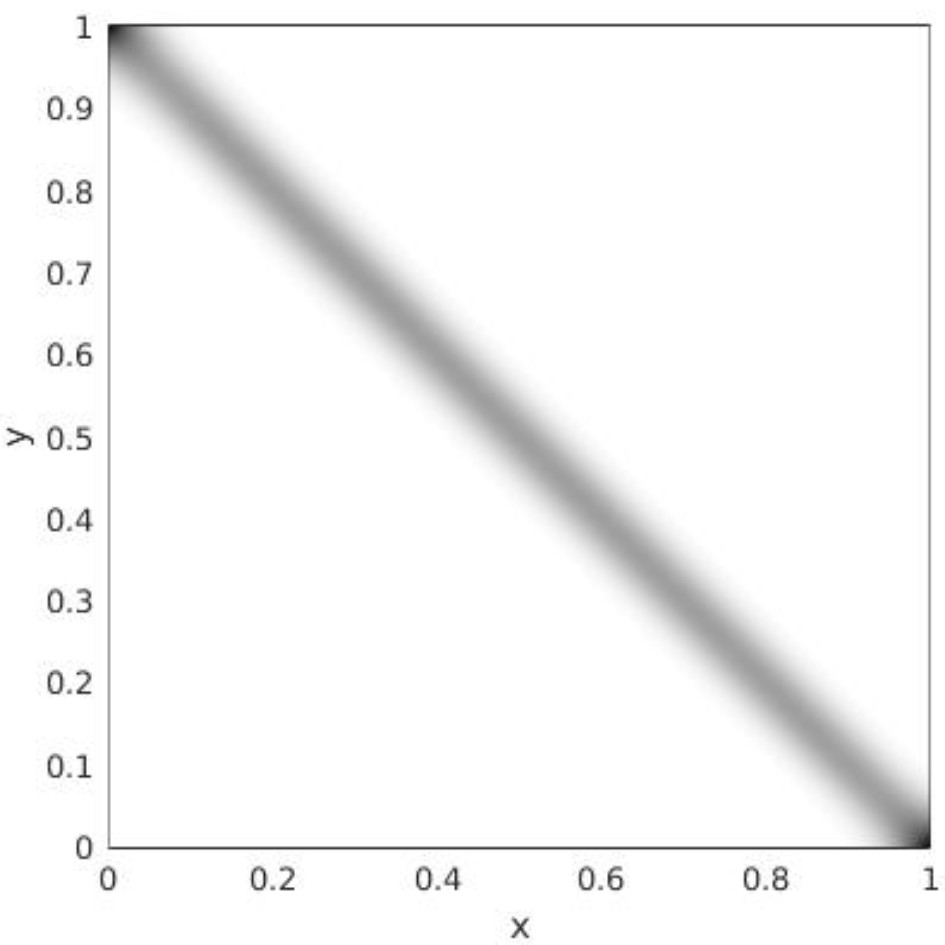}&
\includegraphics[ scale=0.15]{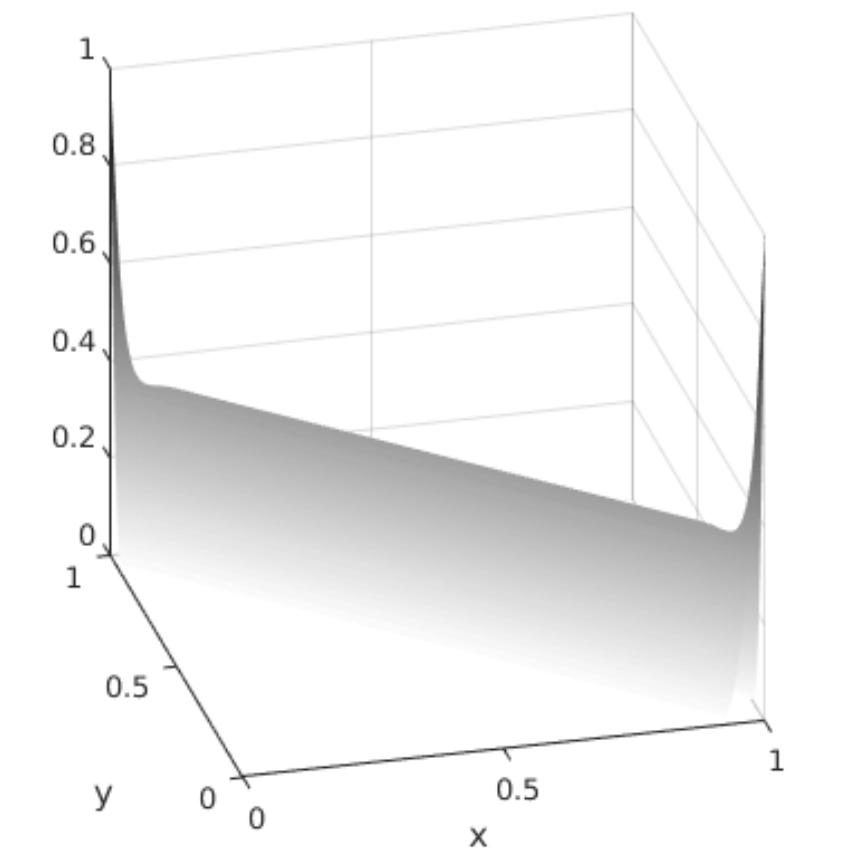}\\

\end{tabular}
\caption{\textit{Left: support of the optimal coupling. Right: optimal coupling.}}
\label{figure:harmonic2marginals}
\end{figure}
The multi-marginals case is more delicate to treat: the original OT problem does not admit a unique solution whereas the regularized problem does.
As we have explained in section \ref{subIPFP} the regularized problem is a strongly convex problem which admits a unique solution and the resulting coupling
is the one with the minimal entropy.
However we are able to make the IPFP algorithm converge to a selected coupling among the optimal ones for the original problem.
Let us focus on the example \ref{OTHarm:ex:twomapsplan}. In this case we have all marginals equal to $\mu=\frac{1}{2}\mathcal{L}|_{[-1,1]}$ and we can find a deterministic coupling
given by $\gamma(x,y,z)=\mu(x)\delta(y-T(x))\delta(z-S(x))$ for the maps $T(x)$ and $S(x)$).
In order to select this coupling, the idea is to modify the cost function by adding a penalization term $p(x,y)=\frac{\tau}{\lvert x-y\rvert}$ which makes $\mu_1$ and $\mu_2$ be \textit{as far as possible} (if we consider $\mu_1$ and $\mu_2$ as particles).
Thus, the discretized cost now reads as 
\begin{equation}
c_{j_{1},j_{2},j_{3}}=\lvert x_{j_{1}}+x_{j_{2}}+x_{j_{3}}\rvert^{2}+\dfrac{\tau}{\lvert x_{j_{1}}-x_{j_{2}}\rvert}.
\end{equation}

Then we expect, as shown in figure \ref{figure:twomaps}, that the projection $\tilde{\gamma}_{12}(x,y)=(e_1,e_2)\gamma(x,y,z)$ and $\tilde{\gamma}_{13}(x,z)=(e_1,e_3)\gamma(x,y,z)$ 
of the computed coupling are induced by map $T$ and $S$ respectively.
The simulation has been performed on a discretization of $[-1,1]$ with $M_d=1000$ gridpoints, $\epsilon=0.0005$ and $\eta=0.1$.\\
\begin{figure}[htbp]

\begin{tabular}{@{}c@{\hspace{3mm}}c@{}}

\centering
\includegraphics[ scale=0.10]{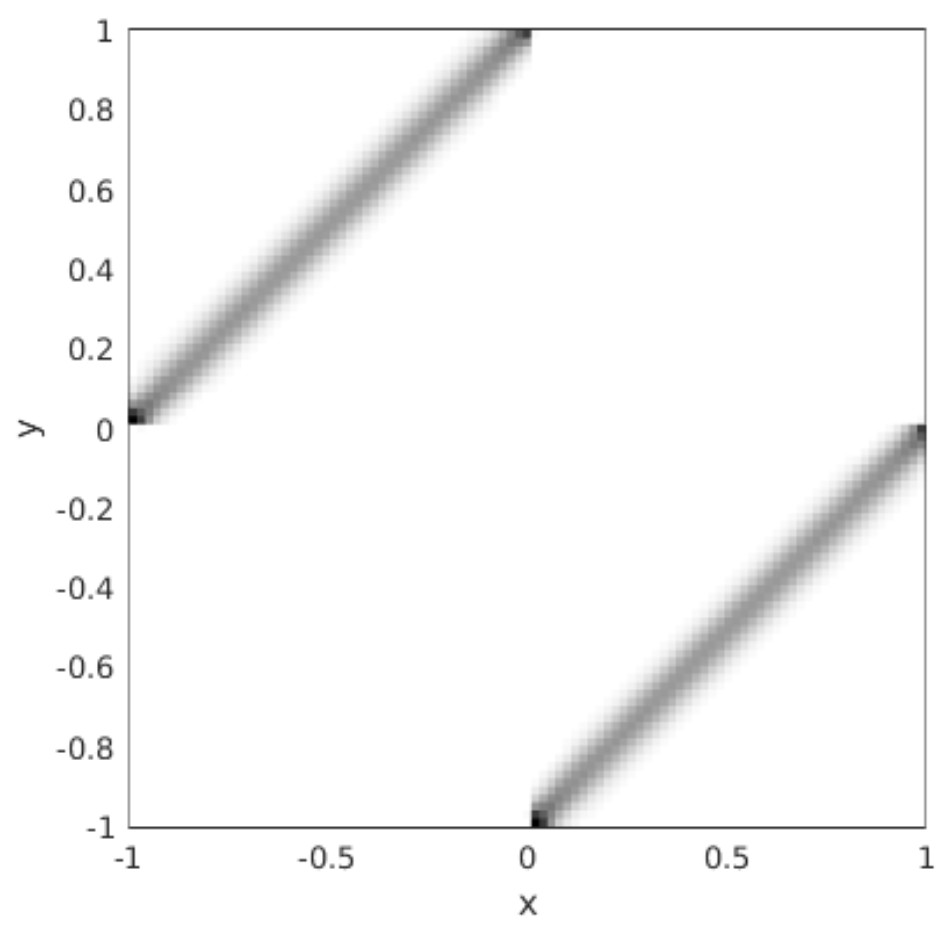}&
\includegraphics[ scale=0.14]{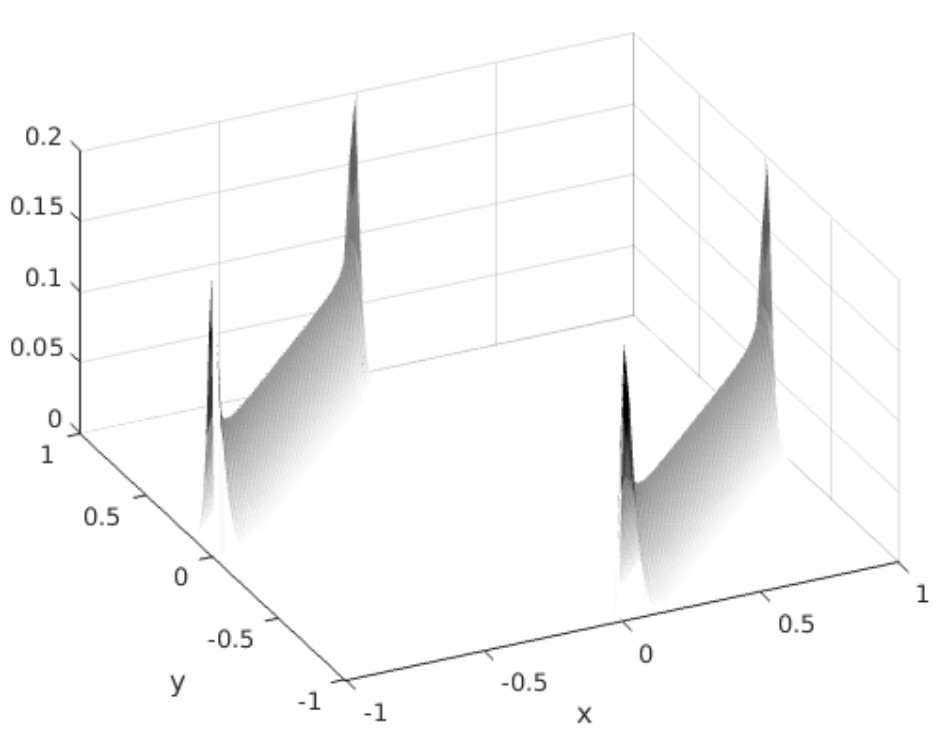}\\
\includegraphics[ scale=0.10]{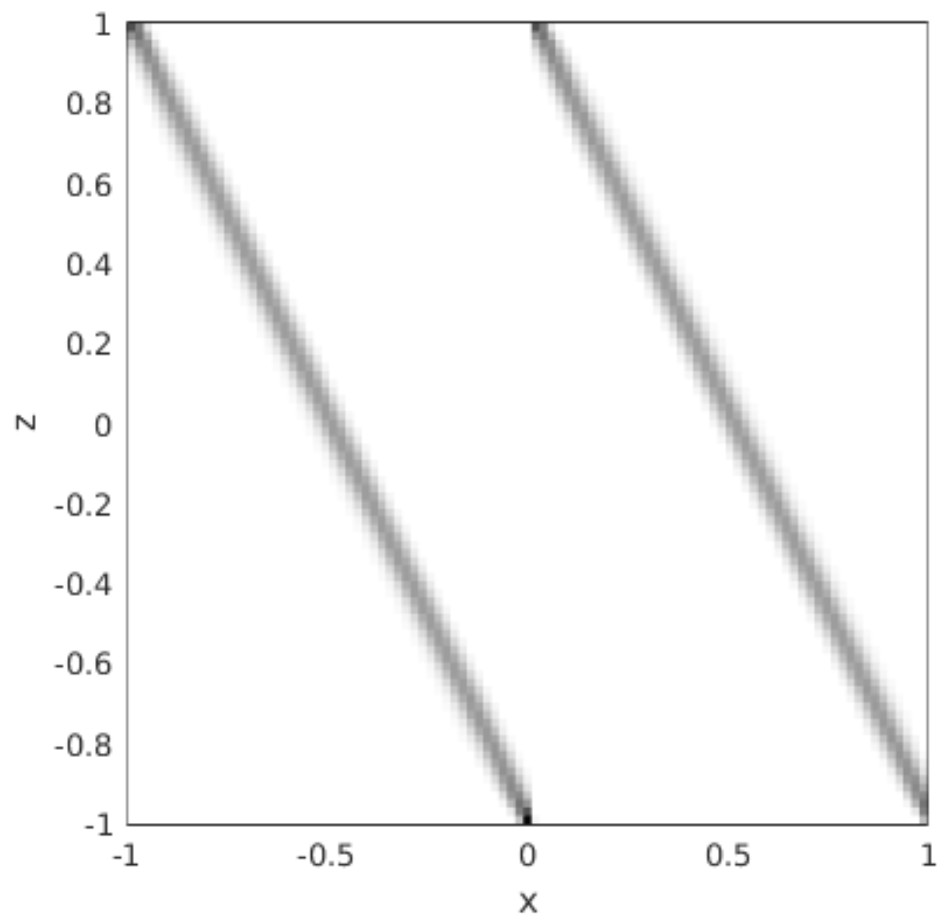}&
\includegraphics[ scale=0.14]{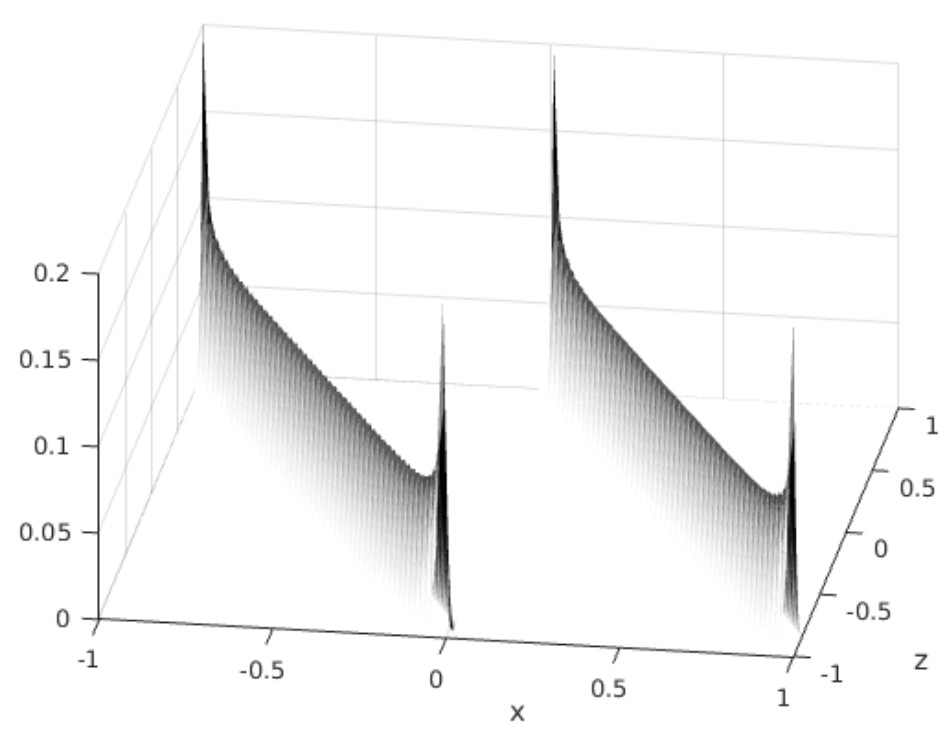}\\
\end{tabular}
\caption{\textit{Top-Left: support of the optimal coupling $\tilde{\gamma}_{12}(x,y)$. Top-Right: optimal coupling $\tilde{\gamma}_{12}(x,y)$.
Bottom-Left: support of the optimal coupling $\tilde{\gamma}_{13}(x,z)$. Bottom-Right: optimal coupling $\tilde{\gamma}_{13}(x,z)$.}}
\label{figure:twomaps}
\end{figure}
We, finally, take the same marginals as in the previous example, but we do not add the penalization. In this case %, as mentioned in \ref{OTHarm:ex:trueplan}, 
 we expect
a diffuse plan since, if we forget about the marginal constraint, it is simple to show that the second order $\Gamma$-limit of the regularized problem as $\epsilon \to 0$ is the relative entropy of $\gamma$ with respect to $\mathcal{H}^{2}|_{\{x+y+z=0\}}$; we expect that the $\Gamma$-limit is the same also when we add the marginal constraint. 

In figure \ref{figure:diffplan} we present  the projection $\tilde{\gamma}_{12}(x,y)=(e_1,e_2)\gamma(x,y,z)$ of the computed coupling
(it is enough to visualize only this projection because of the symmetries of the problem) and, as expected, we observe a diffuse plan. .
The simulation has been performed on a discretization of $[-1,1]$ with $M_d=1000$ gridpoints, $\epsilon=0.0005$.
\begin{figure}[htbp]

\begin{tabular}{@{}c@{\hspace{3mm}}c@{}}
\centering
\includegraphics[ scale=0.12]{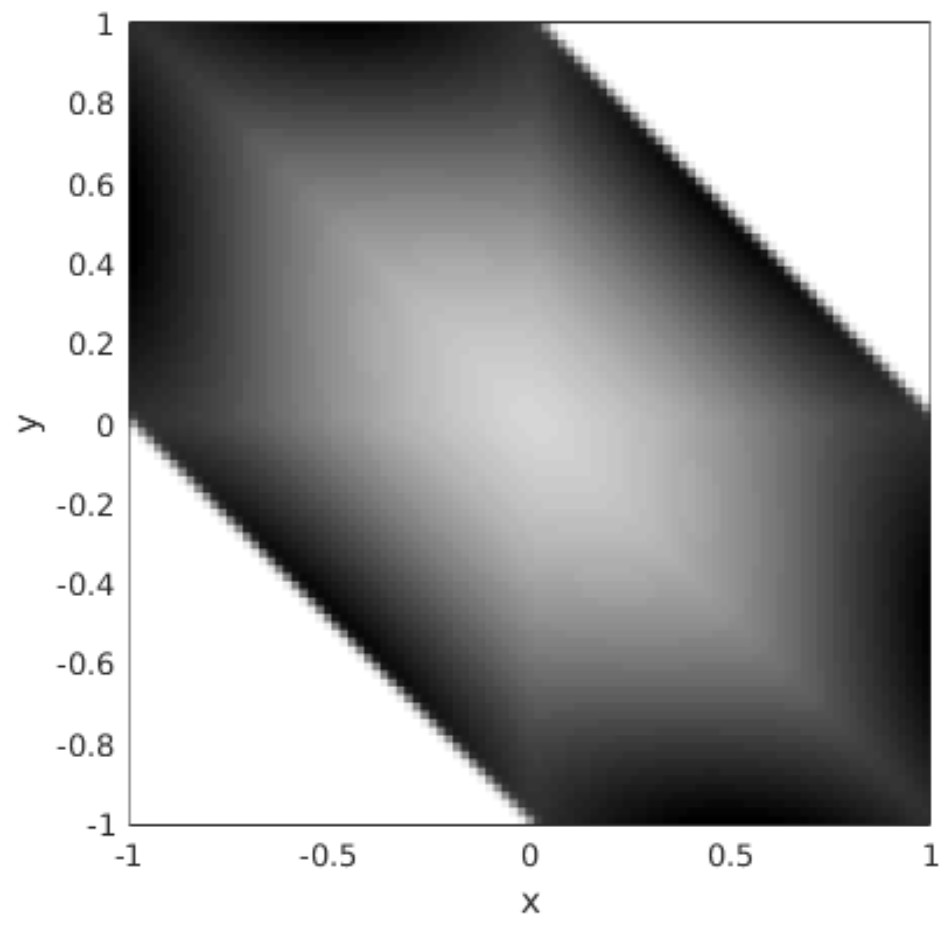}&
\includegraphics[ scale=0.16]{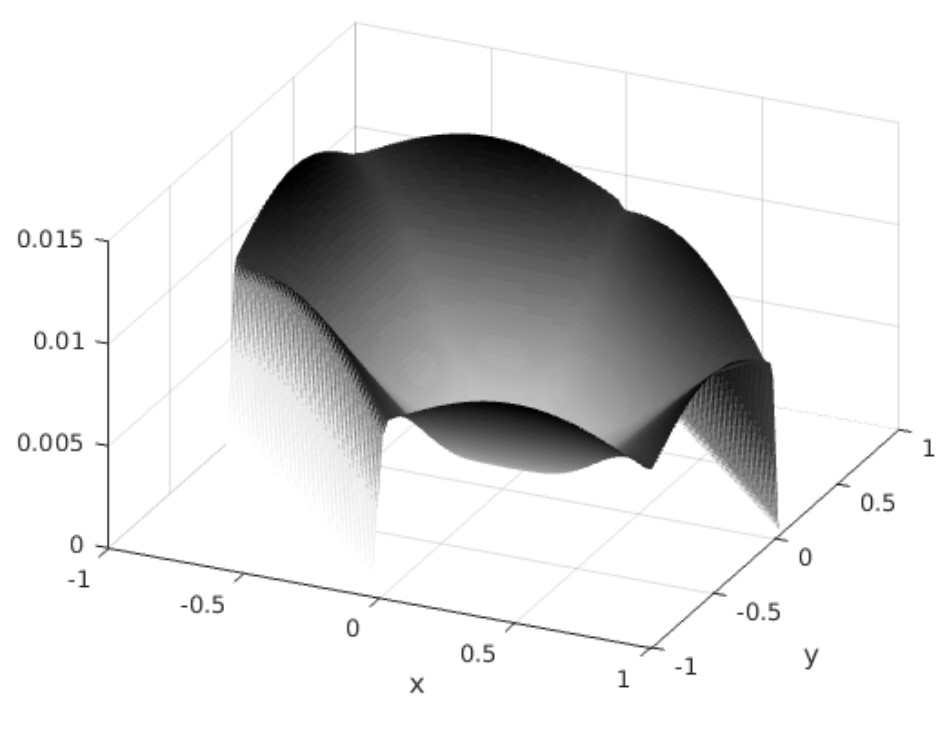}\\

\end{tabular}
\caption{\textit{Left: support of the optimal coupling $\tilde{\gamma}_{12}(x,y)$. Right: optimal coupling $\tilde{\gamma}_{12}(x,y)$.
}}
\label{figure:diffplan}
\end{figure}

\subsection{Numerical experiments: Determinant cost}
Numerical simulations for the multi-marginal problem with the determinant cost present an obvious computational difficulty: in order to compute the solution
just for the 3-marginals case, we have to introduce a discretization of $\mathbb{R}^3$ (as the cost function now is $c(x_1,\cdots,x_N)=det(x_1,\cdots,x_N)$ with $x_{i}\in\mathbb{R}^N$).
However if we take radially symmetric densities $\mu_{i}$ as marginals (see section \ref{sec:Det} for a more complete description of the problem in the measure framework), Carlier and Nazaret show that  ($\MK$) can be reduced to a 1-dimensional problem:
the only unknowns are the relations between the norms  $r_{i}=\| x_{i} \|$ of each vector.
These relations can be obtained by solving the following problem
\begin{equation}
 \label{radialPb}
 \min_{\gamma\in\Pi(\lambda_1(r),\cdots,\lambda_N)}-\int(\prod_{i=1}^{N}r_i)\gamma(r_1,\cdots,r_N)dr_1\cdots dr_N,
\end{equation}
where $\lambda_{i}(r)=\sigma(r)\mu_{i}(\textbf{r})$ (e.g.  $N=3$ then $\textbf{r}=(r,\theta,\varphi)$ and $\sigma(r)=4\pi r^2$).
Moreover, for this problem we know that there exists a unique (deterministic) optimal coupling (see Proposition $5$ in \cite{CarNa}).
The discretized cost now reads $c_{j_{1}\cdots j_{N}}=\prod_{k=1}^{N} r_{j_{k}}$.
Let us consider the 3-marginals case and all densities $\lambda_{i}$ equal to the uniform density on the ball $\mathcal{B}_{0.5}=\{ x\in\mathbb{R}^{3}\lvert \|x\|\leq 0.5\}$.
As proved in \cite{CarNa}, the optimal coupling is  actually supported by the graph of  maps which rearrange measure (see figure \ref{figure:detUniform} ). 
The simulation in figure \ref{figure:detUniform} has been performed on a discretization of $[0,0.5]$ with $M_d=100$ and $\epsilon=0.01$.
\begin{figure}[htbp]

\begin{tabular}{@{}c@{\hspace{1mm}}c@{}}%c@{\hspace{1mm}}c@{}}
\centering
\includegraphics[ scale=0.20]{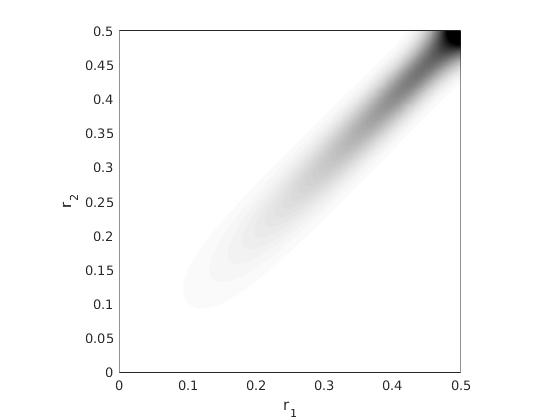}&
\includegraphics[ scale=0.20]{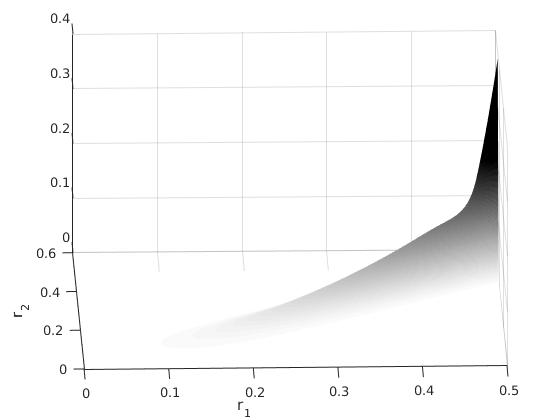}\\%&
\end{tabular}
\caption{\textit{(Uniform density) Left: support of the optimal coupling $\tilde{\gamma}_{12}(r_1,r_2)$. Right: optimal coupling $\tilde{\gamma}_{12}(r_1,r_2)$.
}}%Right: isosurface of $\gamma(r_1,r_2,r_3)$}}
\label{figure:detUniform}
\end{figure}
In the same way we can take all marginals $\lambda_{i}(r)=4\pi r^{2}e^{-4r}$ and we obtain again a deterministic coupling, figure \ref{figure:detExponential}.
The simulation has been performed on a discretization of $[0,3]$ with $M_d=300$ and $\epsilon=0.05$.
\begin{figure}[htbp]

\begin{tabular}{@{}c@{\hspace{1mm}}c@{}}%c@{\hspace{1mm}}c@{}}
\centering
\includegraphics[ scale=0.20]{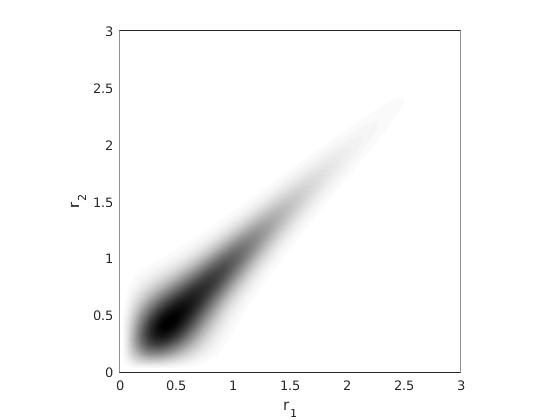}&
\includegraphics[ scale=0.20]{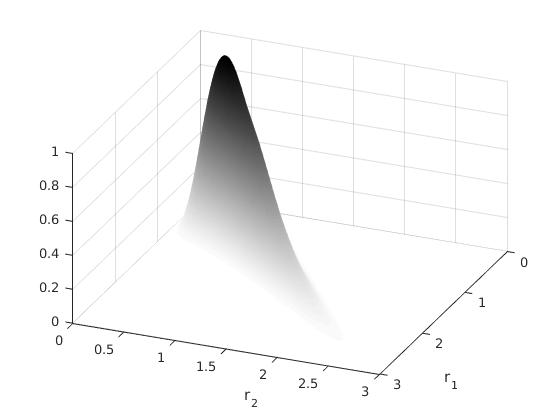}\\%&
\end{tabular}
\caption{\textit{(Exponential density)Left: support of the optimal coupling $\tilde{\gamma}_{12}(r_1,r_2)$. Right: optimal coupling $\tilde{\gamma}_{12}(r_1,r_2)$.
}}%Right: isosurface of $\gamma(r_1,r_2,r_3)$}}
\label{figure:detExponential}
\end{figure}

We, finally, present a simulation (see figure \ref{figure:planden}) with three different marginals $\lambda_{i}$ (see figure \ref{figure:den}).
As one can observe we obtain that the projection of the optimal coupling are concentrated on the graph of a map which rearranges the densities in a monotone way.
The simulation has been performed on a discretization of $[0,4]$ with $M_d=300$ and $\epsilon=0.07$.
\begin{figure}[htbp]

\begin{tabular}{@{}c@{\hspace{1mm}}c@{\hspace{1mm}}c@{}}
\centering
\includegraphics[ scale=0.18]{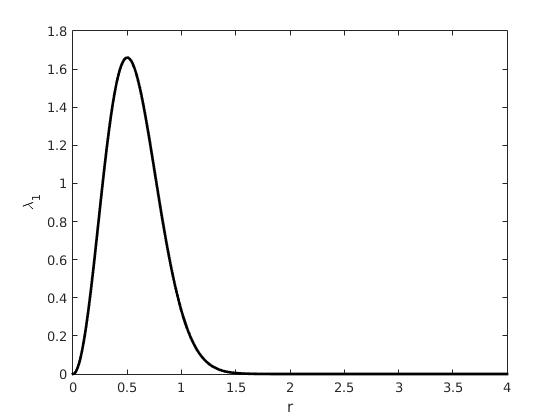}&
\includegraphics[ scale=0.18]{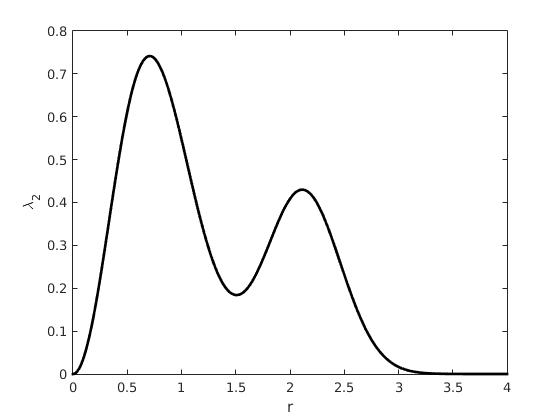}&
\includegraphics[ scale=0.18]{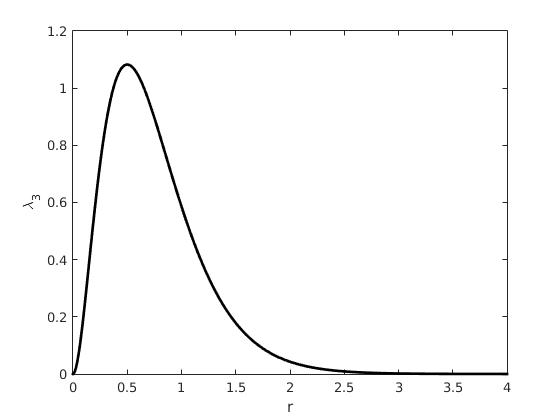}\\
$\lambda_1$&
$\lambda_2$&
$\lambda_3$\\
\end{tabular}
\caption{\textit{Densities $\lambda_1$, $\lambda_2$ and $\lambda_3$.
}}%Right: isosurface of $\gamma(r_1,r_2,r_3)$}}
\label{figure:den}
\end{figure}

\begin{figure}[htbp]

\begin{tabular}{@{}c@{\hspace{1mm}}c@{\hspace{1mm}}c@{}}
\centering
\includegraphics[ scale=0.18]{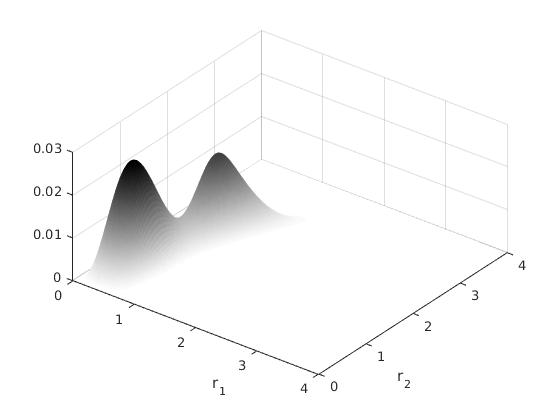}&
\includegraphics[ scale=0.18]{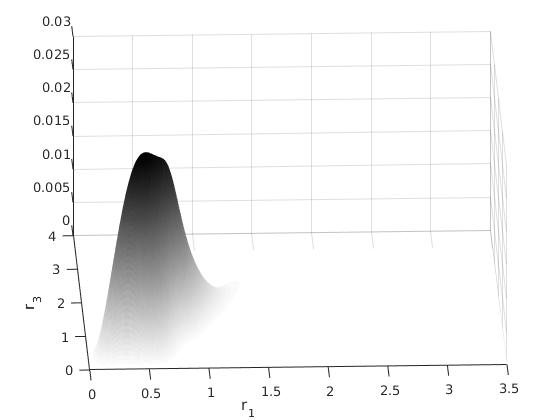}&
\includegraphics[ scale=0.18]{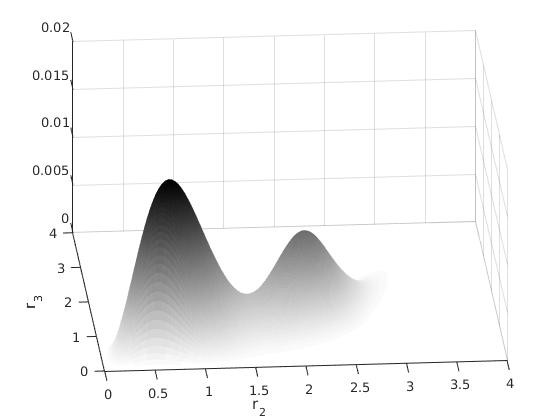}\\
\includegraphics[ scale=0.18]{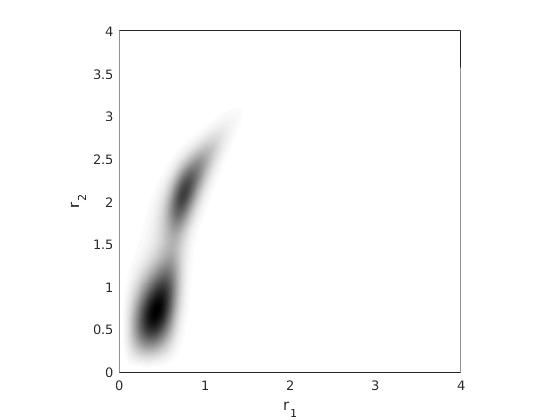}&
\includegraphics[ scale=0.18]{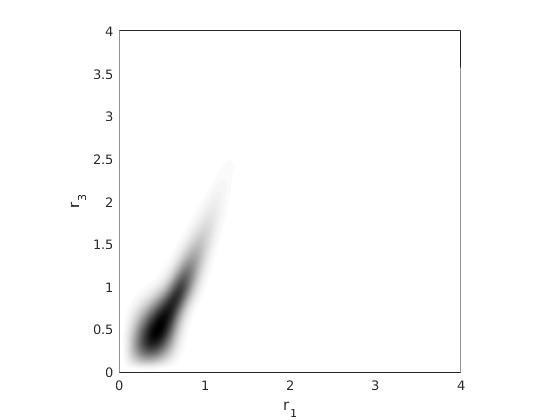}&
\includegraphics[ scale=0.18]{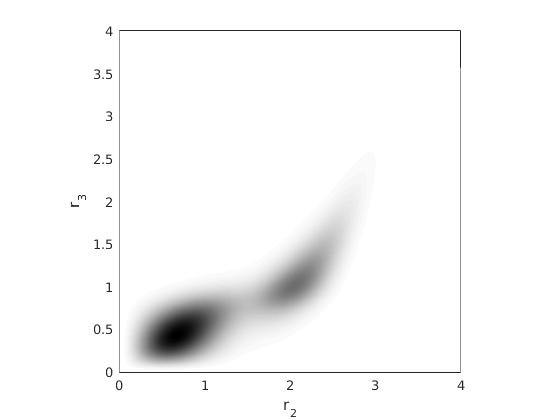}\\
\end{tabular}
\caption{\textit{Top-Left: optimal coupling $\tilde{\gamma}_{12}(r_1,r_2)$. Top-Center: optimal coupling $\tilde{\gamma}_{13}(r_1,r_3)$. Top-Right: optimal coupling $\tilde{\gamma}_{23}(r_2,r_3)$.
Bottom: supports of those couplings.
}}%Right: isosurface of $\gamma(r_1,r_2,r_3)$}}
\label{figure:planden}
\end{figure}
\section{Conclusion}
In this survey we presented a general famework for the multi-marginal optimal transportation with repulsive cost.
We also gave some new results which highlight the importance of the multi-marginal approach.
As mentioned there are still a lot of open questions about the solutions for the problems presented and we hope to treat them in future works.

\section*{Acknowledgements}

We would like to thank Paola Gori-Giorgi for many helpful and stimulating discussions as well as
Jean-David Benamou and Guillaume Carlier for a critical reading of the manuscript.

Simone Di Marino and Luca Nenna gratefully acknowledge the support of the ANR, through the project ISOTACE (ANR-12-MONU-0013).
Luca Nenna also ackwoledges INRIA through the ``action exploratoire'' MOKAPLAN.

Augusto Gerolin is member of the \textit{Gruppo Nazionale per l'Analisi Matematica, la Probabilit\`a e le loro Applicazioni (GNAMPA)} of the \textit{Instituto Nazionale di Alta Matematica (INdAM)}. He also acknowledges and thanks CNPq for the financial support of his Ph.D, when this work was done.

\bibliographystyle{plain}
\bibliography{refs}

\begin{thebibliography}{10}

\bibitem{AbrAbrBerCar}
I~Abraham, R~Abraham, M~Bergounioux, and G~Carlier.
\newblock {Tomographic reconstruction from a few views: a multi-marginal
  optimal transport approach}.
\newblock {\em Preprint Hal-01065981}, 2014.

\bibitem{AguCar}
M~Agueh and G~Carlier.
\newblock {Barycenters in the \{W\}asserstein space}.
\newblock {\em SIAM J. on Mathematical Analysis}, 43(2):904--924, 2011.

\bibitem{AGS}
Luigi Ambrosio, Nicola Gigli, and Giuseppe Savar{\'e}.
\newblock {\em Gradient flows: in metric spaces and in the space of probability
  measures}.
\newblock Springer Science \& Business Media, 2006.

\bibitem{BeiGri}
Mathias Beiglb{\"o}ck and Claus Griessler.
\newblock An optimality principle with applications in optimal transport.
\newblock {\em arXiv preprint arXiv:1404.7054}, 2014.

\bibitem{BeiHenPen}
Mathias Beiglb{\"o}ck, Pierre Henry-Labord{\`e}re, and Friedrich Penkner.
\newblock Model-independent bounds for option prices---a mass transport
  approach.
\newblock {\em Finance and Stochastics}, 17(3):477--501, 2013.

\bibitem{BeiLeoSch}
Mathias Beiglb{\"o}ck, Christian L{\'e}onard, and Walter Schachermayer.
\newblock A general duality theorem for the monge--kantorovich transport
  problem.
\newblock {\em arXiv preprint arXiv:0911.4347}, 2009.

\bibitem{BeiCoxHue}
Mathias Beiglboeck, Alexander Cox, and Martin Huesmann.
\newblock Optimal transport and skorokhod embedding.
\newblock {\em arXiv preprint arXiv:1307.3656}, 2013.

\bibitem{BeiJul}
Mathias Beiglboeck and Nicolas Juillet.
\newblock On a problem of optimal transport under marginal martingale
  constraints.
\newblock {\em arXiv preprint arXiv:1208.1509}, 2012.

\bibitem{Ben}
Jean-David Benamou, Guillaume Carlier, Marco Cuturi, Luca Nenna, and Gabriel
  Peyr{\'e}.
\newblock Iterative bregman projections for regularized transportation
  problems.
\newblock {\em SIAM Journal on Scientific Computing}, 37(2):A1111--A1138, 2015.

\bibitem{BenCaNen}
Jean-David Benamou, Guillaume Carlier, and Luca Nenna.
\newblock A numerical method to solve optimal transport problems with coulomb
  cost.
\newblock {\em arXiv preprint arXiv:1505.01136, to appear as a chapter in
  "Splitting Methods in Communication and Imaging, Science and Engineering",
  Editors R. Glowinski, S. Osher, and W. Yin., Springer}, 2015.

\bibitem{Bodo}
Enrico Bodo.
\newblock Applicazioni di meccanica quantistica: appunti per le lezioni.
  lecture notes for a course in quantum chemistry.
\newblock {\em Available (April/2015) at
  http://w3.uniroma1.it/bodo/dispense/appl.pdf}, 2008.

\bibitem{BuDePGor}
Giuseppe Buttazzo, Luigi De~Pascale, and Paola Gori-Giorgi.
\newblock Optimal-transport formulation of electronic density-functional
  theory.
\newblock {\em Physical Review A}, 85(6):062502, 2012.

\bibitem{Caf}
Luis~A Caffarelli.
\newblock A localization property of viscosity solutions to the monge-ampere
  equation and their strict convexity.
\newblock {\em Annals of Mathematics}, pages 129--134, 1990.

\bibitem{Caf2}
Luis~A Caffarelli.
\newblock Some regularity properties of solutions of monge ampere equation.
\newblock {\em Communications on pure and applied mathematics},
  44(8-9):965--969, 1991.

\bibitem{CarEke}
G~Carlier and I~Ekeland.
\newblock {Matching for teams}.
\newblock {\em Econom. Theory}, 42(2):397--418, 2010.

\bibitem{CarGalChe}
Guillaume Carlier, Victor Chernozhukov, and Alfred Galichon.
\newblock Vector quantile regression.
\newblock {\em arXiv preprint arXiv:1406.4643}, 2014.

\bibitem{CarNa}
Guillaume Carlier and Bruno Nazaret.
\newblock Optimal transportation for the determinant.
\newblock {\em ESAIM: Control, Optimisation and Calculus of Variations},
  14(04):678--698, 2008.

\bibitem{CarObeOud}
Guillaume Carlier, Adam Oberman, and Edouard Oudet.
\newblock Numerical methods for matching for teams and wasserstein barycenters.
\newblock {\em arXiv preprint arXiv:1411.3602}, 2014.

\bibitem{ChenFri}
Huajie Chen and Gero Friesecke.
\newblock Pair densities in density functional theory.
\newblock {\em arXiv preprint arXiv:1503.07539}, 2015.

\bibitem{ChenFriMendl}
Huajie Chen, Gero Friesecke, and Christian~B Mendl.
\newblock Numerical methods for a kohn--sham density functional model based on
  optimal transport.
\newblock {\em Journal of Chemical Theory and Computation}, 10(10):4360--4368,
  2014.

\bibitem{ChiGalSal}
Pierre-Andr{\'e} Chiappori, Alfred Galichon, and Bernard Salani{\'e}.
\newblock The roommate problem is more stable than you think.
\newblock 2014.

\bibitem{ChiMcCNes}
Pierre-Andr{\'e} Chiappori, Robert~J McCann, and Lars~P Nesheim.
\newblock Hedonic price equilibria, stable matching, and optimal transport:
  equivalence, topology, and uniqueness.
\newblock {\em Economic Theory}, 42(2):317--354, 2010.

\bibitem{CoMoYa}
Aron~J Cohen, Paula Mori-S{\'a}nchez, and Weitao Yang.
\newblock Challenges for density functional theory.
\newblock {\em Chemical Reviews}, 112(1):289--320, 2011.

\bibitem{CoDMa}
Maria Colombo and Simone Di~Marino.
\newblock Equality between monge and kantorovich multimarginal problems with
  coulomb cost.
\newblock {\em Annali di Matematica Pura ed Applicata}, pages 1--14, 2013.

\bibitem{CoStra}
Maria Colombo and Federico Stra.
\newblock Counterexamples to multimarginal optimal transport maps with coulomb
  cost and radial measures.
\newblock {\em in preparation}.

\bibitem{CominettiAsympt}
Roberto Cominetti and Jaime San~Mart{\'\i}n.
\newblock Asymptotic analysis of the exponential penalty trajectory in linear
  programming.
\newblock {\em Mathematical Programming}, 67(1-3):169--187, 1994.

\bibitem{CFK}
Codina Cotar, Gero Friesecke, and Claudia Kl{\"u}ppelberg.
\newblock Density functional theory and optimal transportation with coulomb
  cost.
\newblock {\em Communications on Pure and Applied Mathematics}, 66(4):548--599,
  2013.

\bibitem{CotFriPass}
Codina Cotar, Gero Friesecke, and Brendan Pass.
\newblock Infinite-body optimal transport with coulomb cost.
\newblock {\em Calculus of Variations and Partial Differential Equations},
  pages 1--26, 2013.

\bibitem{Cut}
Marco Cuturi.
\newblock Sinkhorn distances: Lightspeed computation of optimal transport.
\newblock In {\em Advances in Neural Information Processing Systems}, pages
  2292--2300, 2013.

\bibitem{DeP}
Luigi De~Pascale.
\newblock Optimal transport with coulomb cost. approximation and duality.
\newblock {\em arXiv preprint arXiv:1503.07063}, 2015.

\bibitem{CoDePDMa}
Simone Di~Marino, Luigi De~Pascale, and Maria Colombo.
\newblock Multimarginal optimal transport maps for $1 $-dimensional repulsive
  costs.
\newblock {\em CANADIAN JOURNAL OF MATHEMATICS-JOURNAL CANADIEN DE
  MATHEMATIQUES}, 67:350--368, 2015.

\bibitem{DMaGGNe}
Simone Di~Marino, Augusto Gerolin, Luca Nenna, Michael Seidl, and Paola
  Gori-Giorgi.
\newblock The strictly-correlated electron functional for spherically symmetric
  systems revisited.
\newblock {\em in preparation}.

\bibitem{DolSon}
Yan Dolinsky and Halil~Mete Soner.
\newblock Martingale optimal transport and robust hedging in continuous time.
\newblock {\em Probability Theory and Related Fields}, 160(1-2):391--427, 2014.

\bibitem{DolSon2}
Yan Dolinsky and Halil~Mete Soner.
\newblock Robust hedging with proportional transaction costs.
\newblock {\em Finance and Stochastics}, 18(2):327--347, 2014.

\bibitem{Eke}
Ivar Ekeland.
\newblock An optimal matching problem.
\newblock {\em ESAIM: Control, Optimisation and Calculus of Variations},
  11(01):57--71, 2005.

\bibitem{EvaZwo}
Lawrence~C Evans and Maciej Zworski.
\newblock Lectures on semiclassical analysis, 2007.

\bibitem{Fri03}
Gero Friesecke.
\newblock The multiconfiguration equations for atoms and molecules: charge
  quantization and existence of solutions.
\newblock {\em Archive for Rational Mechanics and Analysis}, 169(1):35--71,
  2003.

\bibitem{FMPCK}
Gero Friesecke, Christian~B Mendl, Brendan Pass, Codina Cotar, and Claudia
  Kl{\"u}ppelberg.
\newblock N-density representability and the optimal transport limit of the
  hohenberg-kohn functional.
\newblock {\em The Journal of chemical physics}, 139(16):164109, 2013.

\bibitem{GalHenTou}
Alfred Galichon, Pierre Henry-Labord{\`e}re, Nizar Touzi, et~al.
\newblock A stochastic control approach to no-arbitrage bounds given marginals,
  with an application to lookback options.
\newblock {\em The Annals of Applied Probability}, 24(1):312--336, 2014.

\bibitem{GalichonEconomics}
Alfred Galichon and Bernard Salani{\'e}.
\newblock Matching with trade-offs: Revealed preferences over competing
  characteristics.
\newblock 2010.

\bibitem{GaSw}
Wilfrid Gangbo and Andrzej Swiech.
\newblock Optimal maps for the multidimensional monge-kantorovich problem.
\newblock {\em Communications on pure and applied mathematics}, 51(1):23--45,
  1998.

\bibitem{GhoMau}
Nassif Ghoussoub and Bernard Maurey.
\newblock Remarks on multi-marginal symmetric monge-kantorovich problems.
\newblock {\em arXiv preprint arXiv:1212.1680}, 2012.

\bibitem{GhoMoa}
Nassif Ghoussoub and Abbas Moameni.
\newblock Symmetric monge--kantorovich problems and polar decompositions of
  vector fields.
\newblock {\em Geometric and Functional Analysis}, 24(4):1129--1166, 2014.

\bibitem{GhoPass}
Nassif Ghoussoub and Brendan Pass.
\newblock Decoupling of degiorgi-type systems via multi-marginal optimal
  transport.
\newblock {\em Communications in Partial Differential Equations},
  39(6):1032--1047, 2014.

\bibitem{GorSeiVig}
Paola Gori-Giorgi, Michael Seidl, and Giovanni Vignale.
\newblock Density-functional theory for strongly interacting electrons.
\newblock {\em Physical review letters}, 103(16):166402, 2009.

\bibitem{Kant2}
Leonid Kantorovich.
\newblock On a problem of monge.
\newblock {\em Journal of Mathematical Sciences}, 133(4):1383--1383, 2006.

\bibitem{Kant1}
Leonid Kantorovitch.
\newblock On the translocation of masses.
\newblock {\em Management Science}, 5(1):1--4, 1958.

\bibitem{Kel}
Hans~G Kellerer.
\newblock Duality theorems for marginal problems.
\newblock {\em Zeitschrift f{\"u}r Wahrscheinlichkeitstheorie und verwandte
  Gebiete}, 67(4):399--432, 1984.

\bibitem{KimPass}
Young-Heon Kim and Brendan Pass.
\newblock Multi-marginal optimal transport on riemannian manifolds.
\newblock {\em arXiv preprint arXiv:1303.6251}, 2013.

\bibitem{KimPass14}
Young-Heon Kim and Brendan Pass.
\newblock A general condition for monge solutions in the multi-marginal optimal
  transport problem.
\newblock {\em SIAM Journal on Mathematical Analysis}, 46(2):1538--1550, 2014.

\bibitem{KimPasspreprint}
Young-Heon Kim and Brendan Pass.
\newblock Wasserstein barycenters over riemannian manfolds.
\newblock {\em arXiv preprint arXiv:1412.7726}, 2014.

\bibitem{KitPass}
Jun Kitagawa and Brendan Pass.
\newblock The multi-marginal optimal partial transport problem.
\newblock {\em arXiv preprint arXiv:1401.7255}, 2014.

\bibitem{KnoSmi}
Martin Knott and Cyril~S. Smith.
\newblock On a generalization of cyclic monotonicity and distances among random
  vectors.
\newblock {\em Linear algebra and its applications}, 199:363--371, 1994.

\bibitem{KolZae}
Alexander~V Kolesnikov and Danila~A Zaev.
\newblock Optimal transportation of processes with infinite kantorovich
  distance. independence and symmetry.
\newblock {\em arXiv preprint arXiv:1303.7255}, 2013.

\bibitem{LanPer}
David~C Langreth and John~P Perdew.
\newblock The exchange-correlation energy of a metallic surface.
\newblock {\em Solid State Communications}, 17(11):1425--1429, 1975.

\bibitem{Lieb79}
Elliott~H Lieb.
\newblock A lower bound for coulomb energies.
\newblock {\em Physics Letters A}, 70(5):444--446, 1979.

\bibitem{Lieb83}
Elliott~H Lieb.
\newblock Density functionals for coulomb systems.
\newblock In {\em Inequalities}, pages 269--303. Springer, 2002.

\bibitem{LiebOxford}
Elliott~H Lieb and Stephen Oxford.
\newblock Improved lower bound on the indirect coulomb energy.
\newblock {\em International Journal of Quantum Chemistry}, 19(3):427--439,
  1981.

\bibitem{LiebSimon}
Elliott~H Lieb and Barry Simon.
\newblock The hartree-fock theory for coulomb systems.
\newblock {\em Communications in Mathematical Physics}, 53(3):185--194, 1977.

\bibitem{Lions87}
Pierre-Louis Lions.
\newblock Solutions of hartree-fock equations for coulomb systems.
\newblock {\em Communications in Mathematical Physics}, 109(1):33--97, 1987.

\bibitem{LopMen}
Artur~O Lopes and Jairo~K Mengue.
\newblock Duality theorems in ergodic transport.
\newblock {\em Journal of Statistical Physics}, 149(5):921--942, 2012.

\bibitem{LopOliThi}
Artur~O Lopes, Elismar~R Oliveira, and Philippe Thieullen.
\newblock The dual potential, the involution kernel and transport in ergodic
  optimization.
\newblock {\em arXiv preprint arXiv:1111.0281}, 2011.

\bibitem{Malet2012}
Francesc Malet and Paola Gori-Giorgi.
\newblock {Strong Correlation in Kohn-Sham Density Functional Theory}.
\newblock {\em Physical Review Letters}, 109(24):246402, 2012.

\bibitem{Mendl2013}
Christian~B. Mendl and Lin Lin.
\newblock {Kantorovich dual solution for strictly correlated electrons in atoms
  and molecules}.
\newblock {\em Physical Review B}, 87(12):125106, 2013.

\bibitem{Monge}
Gaspard Monge.
\newblock {\em M{\'e}moire sur la th{\'e}orie des d{\'e}blais et des remblais}.
\newblock De l'Imprimerie Royale, 1781.

\bibitem{OlkRac}
Ingram Olkin and Svetlozar~T. Rachev.
\newblock Maximum submatrix traces for positive definite matrices.
\newblock {\em SIAM journal on matrix analysis and applications},
  14(2):390--397, 1993.

\bibitem{Passthesis}
Brendan Pass.
\newblock {\em Structural results on optimal transportation plans}.
\newblock PhD thesis, University of Toronto, 2011.

\bibitem{Pass3}
Brendan Pass.
\newblock Uniqueness and monge solutions in the multimarginal optimal
  transportation problem.
\newblock {\em SIAM Journal on Mathematical Analysis}, 43(6):2758--2775, 2011.

\bibitem{Pass2}
Brendan Pass.
\newblock On the local structure of optimal measures in the multi-marginal
  optimal transportation problem.
\newblock {\em Calculus of Variations and Partial Differential Equations},
  43(3-4):529--536, 2012.

\bibitem{Pass4}
Brendan Pass.
\newblock Remarks on the semi-classical hohenberg--kohn functional.
\newblock {\em Nonlinearity}, 26(9):2731, 2013.

\bibitem{Pass14}
Brendan Pass.
\newblock Multi-marginal optimal transport and multi-agent matching problems:
  uniqueness and structure of solutions.
\newblock {\em Discrete Contin. Dyn. Syst., 34:1623-1639}, 2014.

\bibitem{PassSurvey}
Brendan Pass.
\newblock Multi-marginal optimal transport: theory and applications.
\newblock {\em arXiv preprint arXiv:1406.0026}, 2014.

\bibitem{PazMorGorBac}
Simone Paziani, Saverio Moroni, Paola Gori-Giorgi, and Giovanni~B Bachelet.
\newblock Local-spin-density functional for multideterminant density functional
  theory.
\newblock {\em Physical Review B}, 73(15):155111, 2006.

\bibitem{Pra12}
Aldo Pratelli.
\newblock On the equality between monge's infimum and kantorovich's minimum in
  optimal mass transportation.
\newblock In {\em Annales de l'Institut Henri Poincare (B) Probability and
  Statistics}, volume~43, pages 1--13. Elsevier, 2007.

\bibitem{Pra07}
Aldo Pratelli.
\newblock On the sufficiency of c-cyclical monotonicity for optimality of
  transport plans.
\newblock {\em Mathematische Zeitschrift}, 2007.

\bibitem{Rob87}
Didier Robert.
\newblock {\em Autour de l'approximation semi-classique}, volume~68.
\newblock Birkh{\"a}user Basel, 1987.

\bibitem{Rusch95}
Ludger Ruschendorf.
\newblock Convergence of the iterative proportional fitting procedure.
\newblock {\em The Annals of Statistics}, pages 1160--1174, 1995.

\bibitem{RuschendorfThomsen}
Ludger R{\"u}schendorf and Wolfgang Thomsen.
\newblock Closedness of sum spaces andthe generalized schr{\"o}dinger problem.
\newblock {\em Theory of Probability \& Its Applications}, 42(3):483--494,
  1998.

\bibitem{RusUck}
Ludger R{\"u}schendorf and Ludger Uckelmann.
\newblock {\em On optimal multivariate couplings}.
\newblock Springer, 1997.

\bibitem{SchTei}
Walter Schachermayer and Josef Teichmann.
\newblock Characterization of optimal transport plans for the
  monge-kantorovich-problem.
\newblock {\em proceedings of the A.M.S.}, Vol. 137.(2):519--529, 2009.

\bibitem{Schrodinger31}
Erwin Schr{\"o}dinger.
\newblock {\em {\"U}ber die umkehrung der naturgesetze}.
\newblock Verlag Akademie der wissenschaften in kommission bei Walter de
  Gruyter u. Company, 1931.

\bibitem{Sei}
Michael Seidl.
\newblock Strong-interaction limit of density-functional theory.
\newblock {\em Physical Review A}, 60(6):4387, 1999.

\bibitem{SeiGorSav}
Michael Seidl, Paola Gori-Giorgi, and Andreas Savin.
\newblock Strictly correlated electrons in density-functional theory: A general
  formulation with applications to spherical densities.
\newblock {\em Physical Review A}, 75(4):042511, 2007.

\bibitem{SmiKno}
Cyril Smith and Martin Knott.
\newblock On hoeffding-fr{\'e}chet bounds and cyclic monotone relations.
\newblock {\em Journal of multivariate analysis}, 40(2):328--334, 1992.

\bibitem{TouColSav}
Julien Toulouse, Francois Colonna, and Andreas Savin.
\newblock Short-range exchange and correlation energy density functionals:
  Beyond the local-density approximation.
\newblock {\em The Journal of chemical physics}, 122(1):014110, 2005.

\bibitem{VilON}
C{\'e}dric Villani.
\newblock {\em Optimal transport: old and new}, volume 338.
\newblock Springer Science \& Business Media, 2008.

\bibitem{XferPeyAu}
Gui-Song Xia, Sira Ferradans, Gabriel Peyr{\'e}, and Jean-Fran{\c{c}}ois Aujol.
\newblock Synthesizing and mixing stationary gaussian texture models.
\newblock {\em SIAM Journal on Imaging Sciences}, 7(1):476--508, 2014.

\bibitem{Yang}
Weitao Yang.
\newblock Generalized adiabatic connection in density functional theory.
\newblock {\em The Journal of chemical physics}, 109(23):10107--10110, 1998.

\bibitem{Zae}
Danila Zaev.
\newblock On the monge-kantorovich problem with additional linear constraints.
\newblock {\em arXiv preprint arXiv:1404.4962}, 2014.

\bibitem{Zh60}
Grigorii~M. Zhislin.
\newblock Discussion of the spectrum of schr{\"o}dinger operators for systems
  of many particles.
\newblock {\em Trudy Moskovskogo matematiceskogo obscestva}, 9:81--120, 1960.

\end{thebibliography}

\end{document}